\newcommand{\ball}[2]{B (#1 , #2)}
\newcommand{\ballc}[1]{B (#1)}
\newcommand{\Elliptic}{\vartheta_{\mathrm{E}}}
\newcommand{\EllipticU}{\vartheta_{\mathrm{E}, +}}
\newcommand{\EllipticLower}{\vartheta_{\mathrm{E}, -}}
\newcommand{\Lipschitz}{\vartheta_{\mathrm{L}}}
\newcommand{\euler}{\mathrm{e}}
\newcommand{\RR}{\mathbb{R}}
\newcommand{\CC}{\mathbb{C}}
\newcommand{\NN}{\mathbb{N}}	
\newcommand{\ZZ}{\mathbb{Z}}
\newcommand{\EE}{\mathbb{E}}
\newcommand{\PP}{\mathbb{P}}
\newcommand{\T}{\mathrm{T}}
\newcommand{\cD}{\mathcal{D}}
\newcommand{\cJ}{\mathcal{J}}
\newcommand{\cL}{\mathcal{L}}
\newcommand{\cS}{\mathcal{S}}
\newcommand{\drm}{\mathrm{d}}
\newcommand{\ess}{\mathrm{ess}}
\newcommand{\D}{\mathfrak{D}}
\newcommand{\1}{\mathbf{1}}
\newcommand{\au}{\alpha_1}
\newcommand{\ao}{\alpha_2}
\newcommand{\bu}{\beta_1}
\newcommand{\bo}{\beta_2}
\newcommand{\sfUC}{\mathrm{sfUC}}
\newcommand{\Cac}{F}
\newcommand{\Op}{\mathcal{H}}
\newcommand{\diver}{\operatorname{div}}
\newcommand{\supp}{\operatorname{supp}}
\newcommand{\Ran}{\operatorname{Ran}}
\newcommand{\equi}{\cS}
\newcommand{\Dom}{\mathcal{D}}
\newcommand{\HEins}{H^1}
\newcommand{\HZwei}{H^2}
\newcommand{\HNullEins}{H_0^1}
\newcommand{\BIGOP}[1]{\mathop{\mathchoice%
{\raise-0.22em\hbox{\huge $#1$}}%
{\raise-0.05em\hbox{\Large $#1$}}{\hbox{\large $#1$}}{#1}}}
\newcommand{\BIGboxplus}{\mathop{\mathchoice%
{\raise-0.35em\hbox{\huge $\boxplus$}}%
{\raise-0.15em\hbox{\Large $\boxplus$}}{\hbox{\large $\boxplus$}}{\boxplus}}}
\newcommand{\bigtimes}{\BIGOP{\times}}
\renewcommand{\epsilon}{\varepsilon}
\newtheorem{theorem}{Theorem}[section]
\newtheorem{lemma}[theorem]{Lemma}
\newtheorem{corollary}[theorem]{Corollary}
\theoremstyle{definition}
\newtheorem{definition}[theorem]{Definition}
\theoremstyle{remark}
\newtheorem{remark}[theorem]{Remark}
\newtheorem{example}[theorem]{Example}
 	\definecolor{darkred}{rgb}{0.5,0,0}
 	\definecolor{darkgreen}{rgb}{0,0.5,0}
 	\definecolor{darkblue}{rgb}{0,0,0.5}
\begin{document}
%
%
%
%
%
%
\title{Sampling and equidistribution theorems for elliptic second order operators, lifting of eigenvalues, and applications}
\author[1]{Martin Tautenhahn}
\author[2]{Ivan Veseli\'c}
\affil[1]{Universit\"at Leipzig, Fakult\"at f\"ur Mathematik und Informatik, 04109 Leipzig, Germany}
\affil[2]{Technische Universit\"at Dortmund, Fakult\"at f\"ur Mathematik, 44121 Dortmund, Germany}
%
\date{\vspace{-2.9em}}
%
\maketitle
\begin{abstract}
We consider elliptic second order partial differential operators with Lipschitz continuous leading order coefficients on finite cubes and the whole Euclidean space.
We prove quantitative sampling and equidistribution theorems for eigenfunctions.
The estimates are scale-free, in the sense that for a sequence of growing cubes we obtain uniform estimates.
These results are applied to prove lifting of eigenvalues as well as the infimum of the essential spectrum, and an uncertainty relation (aka spectral  inequality) for short energy interval spectral projectors.
Several application including random operators are discussed.
In the proof we have to overcome several challenges posed by the variable coefficients of the leading term.
\\[1ex]
\textbf{\textsf{Keywords:}} unique continuation;
uncertainty relation;
equidistribution of eigenfunctions;
Carleman estimates;
interpolation estimates;
chaining;
random Schr\"odinger operators
\end{abstract}

This paper is a correction of the publication \cite{TautenhahnV-20}.
We are grateful to Alexander Dicke, who has pointed out to us an error in one of the proofs of the original publication.
Here we present a corrected version. Only Sections~\ref{sec:three-annuli}
and \ref{sec:chaining} needed to be modified.
In Section~\ref{sec:three-annuli}, changes concern only the statement of properties of the constants $\Theta_1, \Theta_3,D_1, D_2, D_3$,
and the addition of the new Corollary \ref{thm:three_annuli_cubes}.
The main changes appear in Section~\ref{sec:chaining}, where several theorems and their proofs are modified.

In particular, all the results formulated in Sections 2 and 3 of \cite{TautenhahnV-20} are correct as stated there.

We have not updated the references and the discussion,
hence they reflects the state of the art at the time (late 2019) when the final version of \cite{TautenhahnV-20} was submitted to the journal and not
at the time when the (corrected) manuscript at hand was uploaded to arXiv.

We also thank Thomas Kalmes for helpful discussions.
%
%
%
%
%
%
\tableofcontents
%
%
%
%
%
%
\section{Introduction}
\emph{Scale-free unique continuation estimates} play an important role in
mathematical models of condensed matter and other structures with multiple length scales which are described by partial differential equations.
\par
They compare the $L^2$-norm of an eigenfunction
on the full domain to the $L^2$-norm on balls distributed evenly throughout the domain.
For this reason we call these scale-free unique continuation estimates
also \emph{sampling} or \emph{equidistribution} theorems, depending on whether the full domain is equal to $\RR^d$ or to a finite (but typically large) cube.
In the latter case, the bounds we present are independent of the size of the cube and are for this reason called \emph{scale-free}.
\par
They are quantitative geometric cousins of unique continuation principles,
which have been developed to study vanishing
orders of eigenfunctions \cite{DonnellyF-88},
absence of eigenvalues embedded in the continuous spectrum
\cite{JerisonK-85}, \cite{IonescuJ-03}, \cite{KochT-06},
limiting absorption principles \cite{Enblom-15},
solutions obeying the Sommerfeld radiation condition \cite{Zubeldia-14},
observability estimates \cite{LebeauR-95}, \cite{RousseauL-12},
inverse problems \cite{FursikovI-96}
and others.
\par
A primary field of applications of scale-free equidistribution theorems is the theory of random Schr{\"o}dinger operators. The importance of Carleman estimates in this field was first realized in \cite{BourgainK-05}.
They have been used in \cite{RojasMolinaV-13} to prove a new scale-free unique continuation principle
for random Schr{\"o}dinger operators, and conclude Anderson localization for Delone-Anderson Hamiltonians.
These results were strengthened and their applicability extended in \cite{Klein-13}.
The best possible scale-free equidistribution theorems valid for
the negative Laplacian plus a bounded potential accessible with Carleman estimates
were established in \cite{NakicTTV-18,NakicTTV-18-arxiv} and \cite{TaeuferT-17,Taeufer-18}.
They apply to functions in the range of a spectral projector
(or a sufficiently fast decaying function, respectively)
of a Schr{\"o}dinger operator associated with any compact energy interval.
Based on \cite{BorisovTV-17}, in \cite{TaeuferT-18} the results of
\cite{RojasMolinaV-13} and \cite{Klein-13} have been extended to the
physical situation where a bounded electromagnetic potential is present.
In the complementary situation of constant magnetic field,
and thus unbounded magnetic vector potential,
scale-free unique continuation estimates
have been established in \cite{CombesHKR-04} under a periodicity assumption.
These results crucially relied on explicit estimates on eigenfunctions of the Landau Hamiltonian
derived in \cite{RaikovW-02b}, and were later adapted for other problems in \cite{GerminetKS-07},
\cite{RojasMolina-12}, and \cite{TaeuferV-16}.
\par
In the context of control theory such estimates
sometimes bear the name of \emph{spectral inequalities}.
For domains with a multi-scale structure the sampling and equidistribution theorems
proved in \cite{EgidiV-16-arxiv,EgidiV-18}, \cite{NakicTTV-18}, and \cite{LebeauM} allow to derive
null-controllability of the heat equation with explicit estimates on the control cost,
see also \cite{WangWZZ,EgidiNSTTV,NakicTTV-control}.
\par
The purpose of the present paper is to generalize the above discussed
scale-free unique continuation estimates to elliptic second order operators.
This is clearly of interest in order to extend the control theory for the heat equation
to more general dissipative evolutions. In the context of Schr{\"o}dinger operators
one encounters such general elliptic operators as effective Hamiltonians
resulting from reduction procedures.
\par
Many methods developed for Schr{\"o}dinger operators can be
adapted to general elliptic second order operators.
However, in our situation the variable coefficient functions of the leading term pose challenges.
For this reason, we were in the prequel paper \cite{BorisovTV-17} only able to treat leading second order terms with
slowly varying coefficients.
In the proofs of the present paper we need additionally new versions of the interpolation inequality and the chaining argument which are adapted to the spectral and geometric situation.
This allows us to complete the argument for arbitrary Lipschitz coefficient functions.
\par
To illustrate the usefulness of our results we discuss several applications
in Section~\ref{sec:applications}.
In particular, we present a lifting estimate for discrete eigenvalues and the minimum of the essential spectrum under a semidefinite potential perturbation,
an uncertainty relation for spectral projectors on short energy intervals, and a coefficient-independent spectral inequality for low energies.
This is then applied to a homogenization scenario and to Wegner estimates for random Schr{\"o}dinger operators.
Some of these topics will be developed fully in a subsequent project.
\par
The paper is structured as follows.
In the following section we formulate our two main results: A sampling theorem valid
on $\RR^d$ and a scale-free equidistribution theorem for cubes.
The sketch of some applications follows in Section~\ref{sec:applications}.
In Section~\ref{sec:three-annuli} the first step of the proof is performed yielding a
\emph{three annuli inequality} tailored to our setting. The following Section~\ref{sec:intermezzo} is an intermezzo:
We give a short proof of our main result in the case of the pure Laplacian.
This enables us to discuss the difference between elliptic second order
operators with slowly and quickly varying coefficient functions.
The proof of the sampling and equidistribution theorems in the general case
are completed in Section~\ref{sec:chaining}. Some technical aspects are deferred to an appendix,
including the explicit estimation of constants and the construction of an extension.
%
%
%
%
%
%
\section{Notation and main results}
\label{sec:results}
Let $d \in \NN$ and consider an operator $\Op \colon C_{\mathrm{c}}^\infty (\RR^d) \to L^2 (\RR^d)$,
 \[
\Op u := - \diver (A \nabla u) + b^\T \nabla u + c u = -\sum_{i,j=1}^d \partial_i \left( a^{ij} \partial_j u \right) + \sum_{i=1}^d b^i \partial_i u + c u ,
\]
where  $A \colon \RR^d \to \RR^{d \times d}$
with $A = (a^{ij})_{i,j=1}^d$, $b : \RR^d \to \CC^d$, $c : \RR^d \to \CC$, and $\partial_i$ denotes the $i$-th weak derivative.
We assume that $a^{ij} \equiv a^{ji}$ for all $i,j \in \{1,\ldots , d\}$, and that there are constants $\Elliptic \geq 1$ and $\Lipschitz \geq 0$ such that for
all $x,y \in \RR^d$ and all $\xi \in \RR^d$ we have
\begin{equation} \label{eq:elliptic}
\Elliptic^{-1} \lvert \xi \rvert^2 \leq \xi^\T A (x) \xi \leq \Elliptic \lvert \xi \rvert^2
\quad\text{and}\quad
\lVert A(x) - A(y) \rVert_\infty \leq \Lipschitz \lvert x-y \rvert .
\end{equation}
Here we denote by $\lvert z \rvert$ the Euclidean norm of $z \in \CC^d$,
and by $\lVert M \rVert_\infty$ the row sum norm of a matrix $M \in \CC^{d\times d}$.
Moreover, we assume that $b \in L^\infty (\RR^d ; \CC^d)$ and $c \in L^\infty (\RR^d)$.
We denote the form associated to $\Op$ by $a_0$, i.e.\ $a_0 : C_{\mathrm{c}}^\infty (\RR^d) \times C_{\mathrm{c}}^\infty (\RR^d) \to \CC$, $a_0 (u,v) = \langle \Op u , v \rangle$.
The operator $\Op$ as well as the form $a_0$ are densely defined and sectorial, i.e.\ their numerical ranges are contained in a sector of the form
 \[
S_{\lambda_0,\theta} = \{\lambda \in \CC \colon \lvert \Im \lambda \rvert \leq \tan \theta (\Re \lambda - \lambda_0) \}
  = \{\lambda \in \CC \colon \operatorname{arg} (\lambda - \lambda_0) \leq \theta\}
  \]
for some $\lambda_0 \in \RR$, and $\theta \in [0,\pi / 2)$, see Section 11 in \cite{Schmuedgen-12}.
\par
Let $H : \Dom (H) \subset L^2 (\RR^d) \to L^2 (\RR^d)$ be the Friedrichs extension of $\Op$,
see e.g.\ page 325 ff.\ in \cite{Kato-80}.
More precisely, since $\Op$ is densely defined and sectorial,
the form $a_0$ is closable. Its closure is given by  $a : \HEins (\RR^d) \times \HEins (\RR^d) \to \CC$,
\[
 a (u,v) = \int_{\RR^d} \left( \sum_{i,j=1}^d a^{ij} \partial_j u \overline{\partial_i v} + \sum_{i=1}^d b^i \partial_i u \overline{v} + c u \overline{v} \right) \drm x .
\]
The form $a$ is densely defined, closed, and m-sectorial.
According to the first representation theorem (cf.~Theorem~2.1 in Chapter~VI of \cite{Kato-80})
there is a unique m-sectorial operator associated with the form $a$, which we denote by $H$.
We have that $C_{\mathrm c}^\infty (\RR^d)$ is an operator core for $H$, i.e.\ $C_{\mathrm c}^\infty (\RR^d)$ is dense in the domain of $H$ with respect to the graph norm
 \[
  \lVert f \rVert_H = \lVert f \rVert_{L^2 (\RR^d)} + \lVert Hf \rVert_{L^2 (\RR^d)},
  \quad
  f \in \Dom (H).
 \]
 This can be seen in the following way: Since the Friedrichs extension $H$ is an m-sectorial operator, its negative generates a $C_0$-semigroup, see e.g.\ Theorem~3.22 in \cite{ArendtCSVV-15}.
 (Beware that \cite{ArendtCSVV-15} has a slightly different terminology compared to \cite{Kato-80,Schmuedgen-12}.)
 Moreover, the first assertion of Theorem~2.3 in \cite{Eberle-99} establishes that the closure of $-\Op$ generates a $C_0$-semigroup as well. Note that \cite{Eberle-99} considers operators $\Op$ with $c = 0$. However, bounded perturbations do not affect the property to generate a $C_0$-semigroup.
 Since $H$ extends $\Op$, and since both $-H$ and the closure of $-\Op$ generate a $C_0$-semigroup, Theorem~1.2 in \cite{Eberle-99} implies that $C_{\mathrm{c}}^\infty (\RR^d)$ is an operator core for $H$.
\par
 Hence, for all $u \in \Dom (H)$ there is a sequence $(u_n)_{n \in \NN}$ in $C_{\mathrm{c}}^\infty (\RR^d)$ such that
 \begin{align} \label{eq:core}
  u_n   &\to u, \quad\text{and}\quad
  H u_n \to H u
  \quad\text{in} \ L^2 (\RR^d) .
 \end{align}
For $L,\rho > 0$ we denote by $\Lambda_L = (-L/2 , L/2)^d$ the open centered cube of side length $L$, and by $\ballc{\rho} = \{ y \in \RR^d \colon \lvert y \rvert < \rho \}$ the centered ball with radius $\rho$. If $x \in \RR^d$ we denote by $\Lambda_L (x) = \Lambda_L + x$ and $\ball{\rho}{x} = \ballc{\rho} + x$ its translates. For $\Omega \subset \mathbb{R}^d$ open and $\psi \in L^2 (\Omega)$ we denote by $\lVert \psi \rVert = \lVert \psi \rVert_\Omega$ the usual $L^2$-norm of $\psi$. If $\Gamma \subset \Omega$ we use the notation $\lVert \psi \rVert_\Gamma = \lVert \chi_\Gamma \psi \rVert_\Omega$.
\begin{definition}
 Let $G > 0$ and $\delta > 0$. We say that a sequence $Z = (z_j)_{j \in (G\ZZ)^d} \subset \RR^d$ is \emph{$(G,\delta)$-equidistributed}, if
 \[
  \forall j \in (G \ZZ)^d \colon \quad  \ball{\delta}{z_j} \subset \Lambda_G (j) .
\]
Corresponding to a $(G,\delta)$-equidistributed sequence $Z$, we define for $L > 0$ the sets
\[
\equi_{\delta,Z} = \bigcup_{j \in (G \ZZ)^d } \ball{\delta}{z_j} \subset \RR^d \quad \text{and} \quad
\equi_{\delta,Z}(L) = \bigcup_{j \in (G \ZZ)^d } \ball{\delta}{z_j} \cap \Lambda_L \subset \Lambda_L .
\]
Note that we suppress the dependence of $\equi_{\delta,Z}$ and $\equi_{\delta , Z} (L)$  on $G$.
\end{definition}
To point out the main technical advancement of the present paper we cite the following theorem from \cite{BorisovTV-17}.
\begin{theorem}[\cite{BorisovTV-17}]\label{thm:BTV}
Let $G > 0$ and assume
\begin{equation} \label{ass:sampling}
 \epsilon := 1 - 33 \mathrm{e} d (\sqrt{d} + 2) \Elliptic^{6} G \Lipschitz   > 0.
\end{equation}
Then for all measurable and bounded $V \colon \mathbb{R}^d \to \mathbb{R}$, all $\psi \in \HZwei (\mathbb{R}^d)$ and $\zeta \in L^2 (\RR^d)$ satisfying $\lvert H \psi \rvert \leq \lvert V\psi \rvert + \lvert \zeta \rvert$ almost everywhere on $\mathbb{R}^d$, all $\delta \in (0,G/2)$ and all $(G,\delta)$-equi\-distri\-buted sequences $Z$ we have
\[
 \lVert \psi \rVert_{\equi_{\delta , Z}}^2 + \delta^2 \lVert \zeta \rVert_{\RR^d}^2 \geq C_{\sfUC} \lVert \psi \rVert_{\mathbb{R}^d}^2 ,
\]
where
\begin{equation*}
 C_{\sfUC} = D_1 \left( \frac{\delta}{G D_2} \right)^{\frac{D_3}{\epsilon} \bigl( 1 +  G^{4/3}\lVert V \rVert_\infty^{2/3} + G^2 \lVert b \rVert_\infty^{2} + G^{4/3}\lVert c \rVert_\infty^{2/3} \bigr) -\ln \epsilon}
\end{equation*}
and $D_1$, $D_2$, and $D_3$ are positive constants depending only on $d$, $\Elliptic$, $\Lipschitz$, and $G$.
\end{theorem}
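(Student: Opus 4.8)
The plan is to run the classical three-step programme for scale-free unique continuation, adapted to the elliptic operator $\Op$: (i) a Carleman estimate for $\Op$; (ii) a quantitative interpolation (three-ball / three-annuli) inequality extracted from it; and (iii) a chaining argument propagating the information available on the balls $\ball{\delta}{z_j}$ over all of $\RR^d$. The hypothesis $\lvert H\psi\rvert\le\lvert V\psi\rvert+\lvert\zeta\rvert$ plays the role of the differential inequality $\lvert\Delta u\rvert\le\lvert Vu\rvert+\lvert\zeta\rvert$ familiar from the Schr\"odinger case; the inhomogeneity $\zeta$ is carried through every estimate and surfaces at the end in the term $\delta^{2}\lVert\zeta\rVert^{2}$ on the left-hand side.

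\emph{Step 1: rescaling and a Carleman estimate.} Putting $\tilde\psi(x)=\psi(Gx)$ reduces the scale $G$ to $1$: the leading matrix becomes $\tilde A(x)=A(Gx)$, whose Lipschitz constant is $G\Lipschitz$, the lower-order data rescale to $G^{2}V(G\cdot)$, $Gb(G\cdot)$ and $G^{2}c(G\cdot)$, and the differential inequality is preserved. The assumption $\epsilon>0$ in \eqref{ass:sampling} says precisely that $G\Lipschitz$ is quantitatively small, of size $\Elliptic^{-6}d^{-3/2}$, so that after freezing $\tilde A$ at a base point $x_{0}$ and reducing to a constant-coefficient, essentially Laplace-type, operator by a linear change of variables, the variation $\tilde A(\cdot)-\tilde A(x_{0})$ can be treated as a perturbation. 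For the frozen operator one uses a Carleman estimate with a radial weight $\phi_{\tau}(x)=\tau\,\sigma(\lvert x\rvert)$; the perturbation contributes an error of size $\sim\tau\,G\Lipschitz$, absorbed by the left-hand side when $G\Lipschitz$ is small, while the zero-order terms $G^{2}V$, $Gb$, $G^{2}c$ are absorbed as soon as
\[
\tau \;\gtrsim\; 1 + G^{4/3}\lVert V\rVert_\infty^{2/3} + G^{2}\lVert b\rVert_\infty^{2} + G^{4/3}\lVert c\rVert_\infty^{2/3},
\]
with the classical powers $2/3$ for the scalar potentials and $2$ for the drift. This is exactly the combination appearing in the exponent of $C_{\sfUC}$.

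\emph{Steps 2 and 3: interpolation and chaining.} Inserting cutoffs supported in annuli into the Carleman estimate and using ellipticity $\Elliptic^{-1}\le A\le\Elliptic$ to pass between gradient and $L^{2}$ norms yields, for concentric balls $B_{r_{1}}\subset B_{r_{2}}\subset B_{r_{3}}$ of comparable radii, a three-annuli inequality of the shape
\[
\lVert\psi\rVert_{B_{r_{2}}} \;\le\; C\bigl(\lVert\psi\rVert_{B_{r_{1}}} + r_{1}^{2}\lVert\zeta\rVert_{B_{r_{3}}}\bigr)^{\mu}\lVert\psi\rVert_{B_{r_{3}}}^{1-\mu},
\]
with $\mu\in(0,1)$, and $C,\mu$ depending on $d,\Elliptic,\Lipschitz,G$ and on the combination of norms above. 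One then covers $\RR^{d}$ by the cubes $\Lambda_{G}(j)$, $j\in(G\ZZ)^{d}$, each carrying a ball $\ball{\delta}{z_{j}}$, and, working inside a concentric enlargement $\Lambda_{cG}(j)$ with $c=c(d)$, applies the interpolation inequality with inner radius $\delta$ and outer radius comparable to $G$ (preceded by $O(1)$ three-ball steps to re-centre). This gives for each cube
\[
\lVert\psi\rVert_{\Lambda_{G}(j)} \;\le\; C'\bigl(\lVert\psi\rVert_{\ball{\delta}{z_{j}}} + \delta^{2}\lVert\zeta\rVert_{\Lambda_{cG}(j)}\bigr)^{\mu}\lVert\psi\rVert_{\Lambda_{cG}(j)}^{1-\mu},
\]
where the exponent $\mu$ degrades as a power of $\delta/G$ and the prefactor $C'$ as a function of the Carleman parameter $\tau$. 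Squaring, using Young's inequality to peel off a small multiple of $\lVert\psi\rVert_{\Lambda_{cG}(j)}^{2}$, and summing over $j$ — the enlarged cubes overlapping with bounded multiplicity — absorbs the large-cube terms into the left-hand side and yields $\lVert\psi\rVert_{\equi_{\delta,Z}}^{2}+\delta^{2}\lVert\zeta\rVert_{\RR^{d}}^{2}\ge C_{\sfUC}\lVert\psi\rVert_{\RR^{d}}^{2}$. The ratio $\delta/G$ enters through the radii, giving the base $\delta/(GD_{2})$; the lower bound on $\tau$ from Step 1, together with the $\delta/G$-degradation and the Young split, produces the exponent $D_{3}\epsilon^{-1}\bigl(1+G^{4/3}\lVert V\rVert_\infty^{2/3}+G^{2}\lVert b\rVert_\infty^{2}+G^{4/3}\lVert c\rVert_\infty^{2/3}\bigr)-\ln\epsilon$, the factors $\epsilon^{-1}$ and $-\ln\epsilon$ recording the degeneration of the Carleman/interpolation constants as $\epsilon\to0^{+}$; the dimensional, combinatorial and Young constants collapse into $D_{1}$.

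\emph{Main obstacle.} The crux is Step 1: securing a Carleman estimate uniform in the base point of the cube lattice while forfeiting only a controlled factor from the variation of $A$. The freeze-and-perturb mechanism works solely because $G\Lipschitz$ is small — this is at once the source of the restriction $\epsilon>0$ and of the factor $\epsilon^{-1}$ (and the mild $-\ln\epsilon$) in the exponent of $C_{\sfUC}$; removing it, i.e.\ admitting arbitrary Lipschitz $A$ with no smallness, is precisely what the remainder of the paper accomplishes by replacing this step with a genuinely variable-coefficient interpolation inequality and a new chaining argument. A secondary technical nuisance is that the natural balls for the metric induced by $A$ are ellipsoids, so the ellipticity bounds must be interposed at every comparison with Euclidean balls, which is the origin of the high power $\Elliptic^{6}$ in \eqref{ass:sampling}.
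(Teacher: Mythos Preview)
This theorem is \emph{cited} from \cite{BorisovTV-17}, not proven in the present paper, so there is no ``paper's own proof'' to compare against directly. Your sketch is broadly consistent with the original approach of \cite{BorisovTV-17} (which in turn follows \cite{RojasMolinaV-13}): Carleman estimate $\to$ quantitative interpolation $\to$ covering/chaining, with the inhomogeneity $\zeta$ carried along. At that level of resolution the outline is correct.

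However, your diagnosis of \emph{where} the smallness condition \eqref{ass:sampling} enters is off. You attribute it to a freeze-and-perturb mechanism in which the variation $A(\cdot)-A(x_0)$ is absorbed as a small error in the Carleman estimate. That is not the actual mechanism. The Carleman estimate used (Theorem~\ref{thm:carleman}, from \cite{NakicRT}) is genuinely variable-coefficient and holds for \emph{any} Lipschitz $A$; the Lipschitz constant enters only through the weight bounds \eqref{eq:weightbound2} via the quantity $\mu_1$. The real bottleneck is geometric, and the paper explains it in Remark~\ref{rem:small-Lipschitz}: in the three-annuli inequality the outer-annulus term carries a factor $(R_2\mu_1\Elliptic/r_3)^{2\alpha}$, and one needs this ratio to be $<1$ so that the outer term can be absorbed after summing over lattice sites. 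Simultaneously, the middle annuli must have $R_2\gtrsim\sqrt d$ to cover $\RR^d$. These two requirements are compatible only when $\mu_1$ is close to $1$, i.e.\ when $G\Lipschitz$ is small --- and that is the origin of \eqref{ass:sampling} and of the $\epsilon^{-1}$, $-\ln\epsilon$ degeneration in the exponent. Your ``high power $\Elliptic^{6}$'' remark is in the right spirit, but the source is this weight/covering tension rather than ellipsoid-versus-ball comparisons in a perturbation argument.

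It is also worth noting that Section~\ref{sec:intermezzo} of the paper gives a \emph{simpler} route to the same conclusion (for the Laplacian, with the extension to small Lipschitz sketched in Remark~\ref{rem:small-Lipschitz}): one applies the three-annuli inequality once with well-chosen radii and sums directly over lattice sites, bypassing iterated chaining. This is shorter than both the original \cite{BorisovTV-17} argument and your Step~3.
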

Note that $\HZwei (\mathbb{R}^d)\subset \Dom (H)$.
The drawback of Theorem~\ref{thm:BTV} is assumption~\eqref{ass:sampling},
which can be interpreted as a smallness assumption on the Lipschitz constant of $A$.
Hence, Theorem~\ref{thm:BTV} is valid for slowly varying second order coefficients only.
Our first main result Theorem~\ref{thm:sampling} gets rid of assumption~\eqref{ass:sampling}.
\begin{theorem}[Sampling Theorem]\label{thm:sampling}
Let $\delta_0=( 330  d \euler^2 \Elliptic^{11/2}(\Elliptic+1)^{5/2}(\Lipschitz+1))^{-1} $.
There is a positive constant $N$ depending  only on $d$, $\Elliptic$, and $\Lipschitz$,
such that for all measurable and bounded $V \colon \RR^d \to \RR$, all
$\psi \in \Dom (H)$ and $\zeta \in L^2 (\RR^d)$ satisfying $\lvert H \psi \rvert \leq \lvert V\psi \rvert + \lvert \zeta \rvert$
almost everywhere on $\RR^d$,
all $\delta \in (0,\delta_0)$,   and all $(1,\delta)$-equi\-distri\-buted sequences $Z$ we have
\[
 \lVert \psi \rVert_{\equi_{\delta,Z}}^2 + \delta^2 \lVert \zeta \rVert_{\RR^d}^2  \geq C_{\sfUC} \lVert \psi \rVert_{\RR^d}^2 ,
\]
where
\begin{equation*}
 C_{\sfUC} = \delta^{N(1+\lVert V \rVert_\infty^{2/3} + \lVert b \rVert_\infty^{2} + \lVert c \rVert_\infty^{2/3})} .
\end{equation*}
\end{theorem}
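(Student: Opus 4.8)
The plan is to deduce Theorem~\ref{thm:sampling} from a scale-free \emph{local} interpolation (three-annuli) estimate, promoted to a global statement via a chaining argument that transports mass from one equidistribution ball to the next. The crucial point, absent from Theorem~\ref{thm:BTV}, is that the chaining must be carried out on balls of radius comparable to $\delta$ rather than on a fixed macroscopic scale, so that the Lipschitz constant of $A$ never has to be small; instead one only pays with a worse (but still polynomial in $\delta$) constant $C_{\sfUC}$. First I would rescale: after an affine change of variables $x \mapsto x_0 + r y$ on a ball $\ball{r}{x_0}$, the operator $H$ becomes an elliptic operator of the same type whose leading coefficients have Lipschitz constant $r\Lipschitz$, ellipticity still $\Elliptic$, and whose lower-order coefficients are multiplied by powers of $r$; choosing $r$ of order $\delta_0$ makes $r\Lipschitz$ small and puts us back in the regime where the three-annuli inequality of Section~\ref{sec:three-annuli} applies on unit balls. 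This is how $\delta_0$ enters: it is essentially the reciprocal of the expression appearing in \eqref{ass:sampling} with $G$ replaced by the working radius, together with the geometric constants needed to fit annuli inside the cubes $\Lambda_1(j)$.

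Next I would set up the chaining. Fix a $(1,\delta)$-equidistributed sequence $Z$ and, for each lattice point $j \in \ZZ^d$, connect the ball $\ball{\delta}{z_j}$ to the balls $\ball{\delta}{z_{j'}}$ of the $2d$ neighbouring cells by a chain of overlapping balls of radius $\asymp \delta$ running along a path inside $\Lambda_1(j) \cup \Lambda_1(j')$; the number of balls in each such chain is bounded by a constant depending only on $d$. Along each link one applies the (rescaled) three-annuli inequality, which gives
\[
\lVert \psi \rVert_{\text{next ball}}^2 \;\ge\; C\,\lVert \psi \rVert_{\text{current ball}}^{2(1-\mu)}\,\bigl(\lVert \psi \rVert_{\text{larger concentric ball}}^2 + \text{error}\bigr)^{\mu}
\]
for some $\mu \in (0,1)$ and an error term controlled by $\delta^2\lVert\zeta\rVert^2$ (this is where the hypothesis $\lvert H\psi\rvert \le \lvert V\psi\rvert + \lvert\zeta\rvert$ is used, exactly as in the proof of Theorem~\ref{thm:BTV}). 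Iterating the interpolation inequality along a chain of bounded length and then summing over all cells covering a large cube $\Lambda_L$, the interpolation exponents multiply to a quantity $\delta^{N(1+\dots)}$, the geometric constants collapse into the single constant $N$ depending only on $d,\Elliptic,\Lipschitz$, and one obtains, for every $L$,
\[
\lVert \psi \rVert_{\equi_{\delta,Z}(L)}^2 + \delta^2\lVert\zeta\rVert_{\Lambda_L}^2 \;\ge\; C_{\sfUC}\,\lVert\psi\rVert_{\Lambda_{L'}}^2
\]
with $L' \le L$ a slightly shrunk cube. Letting $L \to \infty$ and using that $C_{\mathrm c}^\infty(\RR^d)$ is an operator core for $H$ (so that $\psi \in \Dom(H)$ may be approximated in graph norm by the sequence in \eqref{eq:core}, to which the $\HZwei$-statement applies) yields the claim on all of $\RR^d$.

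The main obstacle, and the technical heart of the correction relative to \cite{TautenhahnV-20}, is making the chaining argument \emph{genuinely scale-free in $\delta$}: one must track how the interpolation exponent $\mu$ and the prefactor in the three-annuli inequality depend on the radius and on the rescaled lower-order coefficients, and verify that after composing $O_d(1)$ links and $O(L^d)$ cells the accumulated exponent is still of the claimed form $N(1+\lVert V\rVert_\infty^{2/3}+\lVert b\rVert_\infty^{2}+\lVert c\rVert_\infty^{2/3})$ with $N$ independent of $\delta$, $L$, and the potentials. In particular the $2/3$ and $2$ powers must be propagated correctly through the rescaling (a ball of radius $r$ turns $\lVert V\rVert_\infty$ into $r^2\lVert V\rVert_\infty$, etc., which after the interpolation produces the stated fractional powers), and one must check that the balls in the chains genuinely fit inside the relevant cubes — this is precisely the geometric constraint encoded in $\delta_0$ and in the new Corollary~\ref{thm:three_annuli_cubes}. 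Uniformity in $L$ then follows because every constant in sight depends only on $d,\Elliptic,\Lipschitz$ and on the single working radius, none of which grows with $L$.
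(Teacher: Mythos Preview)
Your proposal contains a genuine gap at its core: the chaining geometry you describe is internally inconsistent. You write that the balls in the chain have radius $\asymp\delta$ and that the chain connects $z_j$ to $z_{j'}$ in a neighbouring unit cell, yet you also claim the number of links is bounded by a constant depending only on $d$. A path of length $\asymp 1$ covered by balls of radius $\asymp\delta$ requires $\asymp 1/\delta$ links, not $O_d(1)$. If you actually iterate the three-annuli inequality $\asymp 1/\delta$ times, the interpolation exponents compound multiplicatively: after $m$ steps the effective exponent is $\gamma^{-m}$ with $\gamma\in(0,1)$, so with $m\asymp 1/\delta$ the resulting constant is of order $\delta^{\,c\,\gamma^{-1/\delta}}$, which is far worse than polynomial in $\delta$ and destroys the claimed form of $C_{\sfUC}$.

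The paper avoids this by a \emph{two-scale} strategy that your sketch does not capture. One applies the interpolation inequality exactly once with the innermost radii $r_1=\delta/2$, $R_1=\delta$ (this is where the $\delta$-dependence enters, through the exponent $\gamma_1^\delta$ in \eqref{eq:gamma1}); all other radii $r_2,R_2,r_3,R_3$ are fixed functions of $d,\Elliptic,\Lipschitz$ of order $\delta_0$. Then the chaining (Theorem~\ref{thm:chaining}) is carried out entirely at the \emph{fixed} scale, with $R_1=r_2$ and chain length $m=2\lfloor 2\sqrt{d}/(R_2-r_2)\rfloor+2$ depending on $d,\Elliptic,\Lipschitz$ but not on $\delta$. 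Moreover the chain does not run between neighbouring cells; within each cell it connects $z_j$ to the point $y_j\in\overline{\Lambda_1(j)}$ maximising $\lVert\psi\rVert_{B(\rho,y)}$, after which a simple covering of $\Lambda_1(j)$ by $N$ balls of radius $\rho$ recovers the full norm. The single $\delta$-dependent step contributes a factor $(C_1^\delta C_2)^{-1/\gamma_1^\delta}$, and since $1/\gamma_1^\delta\sim\ln(1/\delta)$ while $C_2$ is $\delta$-independent, this is what produces the polynomial bound $\delta^{N(1+\cdots)}$.
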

We refer to this result as a \emph{sampling theorem}, because $\lVert \psi \rVert_{\equi_{\delta,Z}}^2$ may be considered as a sample of the full norm
$\lVert \psi \rVert_{\RR^d}^2$.
It is a quantitative unique continuation estimate (in a specific geometric situation)
and may be considered as manifestations of the uncertainty relation.
\begin{remark}
Since $\delta \mapsto  \lVert \psi \rVert_{\equi_{\delta,Z}}$ is isotone, we have  for $\delta \geq \delta_0$ still the estimate
\[
 \lVert \psi \rVert_{\equi_{\delta,Z}}^2 + \delta^2 \lVert \zeta \rVert_{\RR^d}^2 \geq \lVert \psi \rVert_{\equi_{\delta_0,Z}}^2 + \delta_0^2 \lVert \zeta \rVert_{\RR^d}^2 \geq  \delta_0^{N(1+\lVert V \rVert_\infty^{2/3} + \lVert b \rVert_\infty^{2} + \lVert c \rVert_\infty^{2/3})}  \lVert \psi \rVert_{\RR^d}^2 ,
\]
but for large values of $\delta$ this estimate becomes trivial.
\end{remark}
By a scaling argument as in Appendix C in \cite{BorisovTV-17} we immediately obtain
\begin{corollary}[Scaled Sampling Theorem] \label{cor:samplingG}
Let $G > 0$, $\delta_0= G( 330  d \euler^2 \Elliptic^{11/2}(\Elliptic+1)^{5/2}(G\Lipschitz \allowbreak +1))^{-1} $,
and $N = N (d , \Elliptic , G \Lipschitz) > 0$ be as in Theorem~\ref{thm:sampling} with $\Lipschitz$ replaced by $G \Lipschitz$.
Then for all measurable and bounded $V \colon \RR^d \to \RR$, all
$\psi \in \Dom (H)$ and $\zeta \in L^2 (\RR^d)$ satisfying $\lvert H \psi \rvert \leq \lvert V\psi \rvert + \lvert \zeta \rvert$
almost everywhere on $\RR^d$,
all $\delta \in (0,\delta_0)$,  and all $(G,\delta)$-equi\-distri\-buted sequences $Z$ we have
\[
 \lVert \psi \rVert_{\equi_{\delta,Z}}^2 + G^2 \delta^2 \lVert \zeta \rVert_{\RR^d}^2  \geq C_{\sfUC} \lVert \psi \rVert_{\RR^d}^2 ,
\]
where
\begin{equation}\label{eq:CsfUC}
 C_{\sfUC} = \left( \frac{\delta}{G} \right)^{N(1+G^{4/3}\lVert V \rVert_\infty^{2/3} + G^2\lVert b \rVert_\infty^{2} + G^{4/3}\lVert c \rVert_\infty^{2/3})} .
\end{equation}
If $\psi $ satisfies even $H \psi =V\psi $ almost everywhere on $\RR^d$,
$C_{\sfUC}$ in \eqref{eq:CsfUC} can be replaced by
\begin{equation*} 
 C_{\sfUC}
 =
 \left(\frac{\delta}{G}\right)^{N(1 + G^{4/3}\lVert V-c\rVert_\infty^{2/3} + G^2\lVert b \rVert_\infty^{2})}
\end{equation*}
\end{corollary}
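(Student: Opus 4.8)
The plan is to reduce Corollary~\ref{cor:samplingG} to Theorem~\ref{thm:sampling} by the dilation argument of Appendix~C in \cite{BorisovTV-17}. First I would introduce the unitary dilation $U_G\colon L^2(\RR^d)\to L^2(\RR^d)$, $(U_Gf)(x)=G^{d/2}f(Gx)$, and conjugate: a chain-rule computation shows that $\widetilde\Op:=G^2U_G\Op U_G^{-1}$ is again an operator of the form considered in Section~\ref{sec:results}, with coefficients $\widetilde A(x)=A(Gx)$, $\widetilde b(x)=G\,b(Gx)$, $\widetilde c(x)=G^2c(Gx)$. Hence $\widetilde A$ satisfies \eqref{eq:elliptic} with the \emph{same} ellipticity constant $\Elliptic$ but Lipschitz constant $G\Lipschitz$, while $\lVert\widetilde b\rVert_\infty=G\lVert b\rVert_\infty$ and $\lVert\widetilde c\rVert_\infty=G^2\lVert c\rVert_\infty$. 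Since $U_G$ maps $C_{\mathrm c}^\infty(\RR^d)$ onto itself and restricts to a bounded bijection of $\HEins(\RR^d)$, conjugation by $U_G$ intertwines the closed m-sectorial forms, so the Friedrichs extension $\widetilde H$ of $\widetilde\Op$ equals $G^2U_GHU_G^{-1}$; in particular $\Dom(\widetilde H)=U_G\Dom(H)$.

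Next I would transport the geometric and analytic data. Given a $(G,\delta)$-equidistributed $Z=(z_j)_{j\in(G\ZZ)^d}$, set $\widetilde z_k:=G^{-1}z_{Gk}$ for $k\in\ZZ^d$; dividing the inclusion $\ball{\delta}{z_{Gk}}\subset\Lambda_G(Gk)$ by $G$ shows that $\widetilde Z=(\widetilde z_k)_{k\in\ZZ^d}$ is $(1,\delta/G)$-equidistributed and that $\equi_{\delta/G,\widetilde Z}=G^{-1}\equi_{\delta,Z}$. For $\psi\in\Dom(H)$ and $\zeta\in L^2(\RR^d)$ with $\lvert H\psi\rvert\le\lvert V\psi\rvert+\lvert\zeta\rvert$ a.e., set $\widetilde\psi:=U_G\psi\in\Dom(\widetilde H)$, $\widetilde\zeta:=G^2U_G\zeta$, and $\widetilde V(x):=G^2V(Gx)$; since $U_G$ commutes with taking pointwise absolute values, $\lvert\widetilde H\widetilde\psi\rvert=G^2\lvert U_G(H\psi)\rvert\le\lvert\widetilde V\widetilde\psi\rvert+\lvert\widetilde\zeta\rvert$ a.e. Because $U_G$ is unitary and the integration domains above differ only by the dilation, $\lVert\widetilde\psi\rVert_{\equi_{\delta/G,\widetilde Z}}=\lVert\psi\rVert_{\equi_{\delta,Z}}$, $\lVert\widetilde\psi\rVert_{\RR^d}=\lVert\psi\rVert_{\RR^d}$, and $\lVert\widetilde\zeta\rVert_{\RR^d}=G^2\lVert\zeta\rVert_{\RR^d}$.

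Then I would apply Theorem~\ref{thm:sampling} to $\widetilde H$ (whose Lipschitz constant is $G\Lipschitz$, so with $N=N(d,\Elliptic,G\Lipschitz)$) using $\widetilde\psi$, $\widetilde\zeta$, $\widetilde V$, radius $\delta/G$ and sequence $\widetilde Z$; the admissibility hypothesis $\delta/G<(330\,d\euler^2\Elliptic^{11/2}(\Elliptic+1)^{5/2}(G\Lipschitz+1))^{-1}$ is exactly $\delta<\delta_0$. Feeding the scaling relations back into the resulting inequality gives $\lVert\psi\rVert_{\equi_{\delta,Z}}^2+G^2\delta^2\lVert\zeta\rVert_{\RR^d}^2\ge C_{\sfUC}\lVert\psi\rVert_{\RR^d}^2$ with exponent $N(1+(G^2\lVert V\rVert_\infty)^{2/3}+(G\lVert b\rVert_\infty)^2+(G^2\lVert c\rVert_\infty)^{2/3})=N(1+G^{4/3}\lVert V\rVert_\infty^{2/3}+G^2\lVert b\rVert_\infty^2+G^{4/3}\lVert c\rVert_\infty^{2/3})$, i.e.\ \eqref{eq:CsfUC}. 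For the final assertion, if $H\psi=V\psi$ a.e.\ I would write $H=H_0+c$ with $H_0$ the elliptic operator having vanishing zero-order term (and $\Dom(H_0)=\Dom(H)$ since $c$ is bounded), so that $H_0\psi=(V-c)\psi$; applying the part just proved to $H_0$ with the real bounded potential $\lvert V-c\rvert$ and $\zeta=0$ removes the zero-order contribution to the exponent and replaces $\lVert V\rVert_\infty$ by $\lVert V-c\rVert_\infty$, yielding the stated $C_{\sfUC}$. I do not expect a genuine obstacle here: the only points requiring care are the bookkeeping of how each quantity rescales ($b$ linearly, $c$ and $V$ quadratically, the length $\delta$ like $\delta/G$, and the $\zeta$-term additionally picking up the $G^{d/2}$ from unitarity) and the verification that $\widetilde H$ really is the Friedrichs extension of $\widetilde\Op$.
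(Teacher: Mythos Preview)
Your proposal is correct and follows exactly the scaling argument the paper invokes (Appendix~C of \cite{BorisovTV-17}): conjugation by the unitary dilation $U_G$ turns the $(G,\delta)$-geometry into a $(1,\delta/G)$-geometry and rescales the coefficients as you describe, after which Theorem~\ref{thm:sampling} applies verbatim. Your treatment of the final assertion via $H_0\psi=(V-c)\psi$ is precisely the observation the paper records in the sentence ``$H\psi=V\psi$ is equivalent to $(H-V)\psi=0$''.
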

The last statement holds since $H \psi =V\psi $ is equivalent to $(H-V) \psi =0$.
\par
In \cite{BorisovTV-17},
again under the smallness condition \eqref{ass:sampling} on the Lipschitz constant of $A$, a variant of Theorem~\ref{thm:BTV} is proven for functions in $\HZwei (\Lambda_L)$.
In the finite box geometry, as well, we are able to overcome the smallness assumption \eqref{ass:sampling},
and treat arbitrary Lipschitz constants of the coefficients of $A$.
In order to state this second main result, some notation is in order.
\par
For $L > 0$ we introduce the differential operator $\Op_L : C_{\mathrm{c}}^\infty (\Lambda_L) \to L^2 (\RR^d)$,
  \begin{equation*}
  \Op_L u := - \diver (A_L \nabla u) + b_L^\T \nabla u + c_L u = -\sum_{i,j=1}^d \partial_i \left( a_L^{ij} \partial_j u \right) + \sum_{i=1}^d b_L^i \partial_i u + c_L u ,
  \end{equation*}
where $A_L : {\Lambda_L} \to \RR^{d \times d}$ with $A_L= (a_L^{ij})_{i,j=1}^d$, $b_L : {\Lambda_L} \to \CC^d$, and $c_L \colon {\Lambda_L} \to \CC^d$. We assume that $a_L^{ij} \equiv a_L^{ji}$ for all $i,j \in \{1,\ldots , d\}$, and that there are constants $\Elliptic \geq 1$ and $\Lipschitz \geq 0$ such that for
all $x,y \in {\Lambda_L}$ and all $\xi \in \RR^d$ we have
\begin{equation} \label{eq:elliptic2}
\Elliptic^{-1} \lvert \xi \rvert^2 \leq \xi^\T A_L (x) \xi \leq \Elliptic \lvert \xi \rvert^2
\quad\text{and}\quad
\lVert A_L(x) - A_L(y) \rVert_\infty \leq \Lipschitz \lvert x-y \rvert .
\end{equation}
Moreover, we assume that $b_L \in L^\infty (\Lambda_L ; \CC^d)$ and $c_L \in L^\infty ({\Lambda_L})$.
Let $H_L : \Dom (H_L) \subset L^2 (\Lambda_L) \to L^2 (\Lambda_L)$ be the Friedrichs extension of $\Op$, see e.g.\ page 325 ff.\ in \cite{Kato-80}. That is, we consider the form $a_L : \HNullEins (\Lambda_L) \times \HNullEins (\Lambda_L) \to \CC$ given by
\[
 a_L (u,v) = \int_{\Lambda_L} \left( \sum_{i,j=1}^d a_L^{ij} \partial_i u \overline{\partial_j v} + \sum_{i=1}^d b_L^i \partial_i u \overline{v} + c_L u \overline{v} \right) \drm x ,
\]
As before, the form $a_L$ is densely defined, closed, and sectorial, and $H_L$ is the unique m-sectorial operator associated with the form $a_L$.
\par
We want to derive equidistribution properties for functions $\psi_L \in \Dom (H_L)$ satisfying the
differential inequality $\lvert H_L \psi_L \rvert \leq \lvert V_L \psi_L \rvert$ almost everywhere on $\Lambda_L$ with $V_L \in L^\infty (\Lambda_L)$, that is,
we want to obtain a finite volume analogue of Theorem~\ref{thm:sampling}.
A particular feature of our estimate is that the constant will be
\emph{independent of the scale} $L$ of the cube $\Lambda_L$.
\par
Since the coefficients $a_L^{ij}$, $i,j\in\{1,\ldots d\}$ by assumption obey a Lipschitz condition on $\Lambda_L$, they are pointwise well defined,
and extend in a unique way to continuous functions  $a_L^{ij} : \overline{\Lambda_L} \to \RR$, $i,j\in\{1,\ldots d\}$, which will be denoted by the same symbol.
We shall also need the following auxiliary assumption for the coefficients $a_L^{ij}$, $i,j\in\{1,\ldots d\}$:
\begin{itemize}
  \item [(Dir)] For all $i,j\in \{1,\ldots, d\}$ with $i\neq j$, the coefficients $a_L^{ij}$ vanish on the sides of $\Lambda_L$.
\end{itemize}
\begin{theorem}[Equidistribution Theorem] \label{thm:equidistribution}
 Let $\delta_0=\bigl( 330  d \euler^2 \Elliptic^{11/2}(\Elliptic+1)^{5/2}(\Lipschitz+1) \bigr)^{-1} $, $L\in \NN$, Assumption (Dir) be satisfied,
 and $N = N (d, \Elliptic , \Lipschitz) > 0$ be as in Theorem~\ref{thm:sampling}.
Then for all measurable and bounded $V_L : \Lambda_L \to \RR$,
all $\psi_L \in \Dom (H_L)$ and $\zeta_L \in L^2 (\Lambda_L)$ satisfying $\lvert H_L \psi_L \rvert \leq \lvert V_L\psi_L \rvert + \lvert \zeta_L \rvert$ almost everywhere on $\Lambda_L$,
all $\delta \in (0,\delta_0)$, and all $(1,\delta)$-equi\-distri\-buted sequences $Z$ we have
\[
 \lVert \psi_L \rVert_{\equi_{\delta,Z} (L)}^2 + \delta^2 \lVert \zeta \rVert_{\Lambda_L}^2  \geq C_{\sfUC} \lVert \psi_L \rVert_{\Lambda_L}^2 ,
\]
where
\begin{equation*}
 C_{\sfUC} = \delta^{N(1+\lVert V_L \rVert_\infty^{2/3} + \lVert b_L \rVert_\infty^{2} + \lVert c_L \rVert_\infty^{2/3})} .
\end{equation*}
\end{theorem}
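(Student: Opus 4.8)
The plan is to reduce the finite-volume estimate to the sampling theorem on $\RR^d$ (Theorem~\ref{thm:sampling}) by an \emph{extension via iterated reflection} across the faces of $\Lambda_L$, followed by a cut-off and a limiting argument that removes the (non-$L^2$) periodicity produced by the reflection.

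\emph{Step 1: reflection.} Given $\psi_L \in \Dom(H_L)$ with $\lvert H_L\psi_L\rvert \le \lvert V_L\psi_L\rvert + \lvert\zeta_L\rvert$ a.e.\ on $\Lambda_L$, extend $\psi_L$ to a $2L$-periodic function $\Psi$ on $\RR^d$ by reflecting it oddly across each face of $\Lambda_L$, and extend the data by the reflection rule that renders the operator covariant: $a_L^{ii}$, $c_L$, $V_L$, $\zeta_L$ evenly; $b_L^i$ oddly in $x_i$; $a_L^{ij}$ ($i \neq j$) oddly in $x_i$ and in $x_j$. Denote the results by $\tilde A, \tilde c, \tilde V, \tilde\zeta, \tilde b$ and set $\tilde\Op u := -\diver(\tilde A\nabla u) + \tilde b^\T\nabla u + \tilde c u$. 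Since $L \in \NN$, the reflections across $\{ x_i = \pm L/2 \}$ preserve the lattice $\{ \Lambda_1(k) : k \in \ZZ^d \}$ of unit cubes, so $Z$ can be reflected along with $\psi_L$, producing a $(1,\delta)$-equidistributed sequence $Z^{\mathrm{ext}}$ on $\RR^d$ such that inside every reflected copy of $\Lambda_L$ the pair $(\Psi, \equi_{\delta,Z^{\mathrm{ext}}})$ is an isometric (reflected) copy of $(\psi_L, \equi_{\delta,Z}(L))$. The structural facts one needs — carried out in the appendix — are: (i) $\tilde A$ remains symmetric, keeps the ellipticity constant $\Elliptic$ (ellipticity is invariant under conjugation by a reflection) and is globally Lipschitz on $\RR^d$ with the same constant $\Lipschitz$; this is exactly where Assumption~(Dir) is indispensable, since the odd reflections of the off-diagonal entries are continuous across a reflection hyperplane only because those entries vanish there; (ii) $\tilde b \in L^\infty(\RR^d;\CC^d)$, $\tilde c, \tilde V \in L^\infty(\RR^d)$, $\tilde\zeta \in L^2_{\mathrm{loc}}(\RR^d)$ with $\lVert\tilde b\rVert_\infty = \lVert b_L\rVert_\infty$, $\lVert\tilde c\rVert_\infty = \lVert c_L\rVert_\infty$, $\lVert\tilde V\rVert_\infty = \lVert V_L\rVert_\infty$; (iii) $\Psi \in \HEins_{\mathrm{loc}}(\RR^d)$, and because the Dirichlet condition forces the traces to match and Assumption~(Dir) forces the conormal fluxes to match, no surface contribution arises on the reflection hyperplanes, so the distribution $\tilde\Op\Psi$ equals the appropriate reflection of $H_L\psi_L$; in particular $\tilde\Op\Psi \in L^2_{\mathrm{loc}}(\RR^d)$ and $\lvert\tilde\Op\Psi\rvert \le \lvert\tilde V\Psi\rvert + \lvert\tilde\zeta\rvert$ a.e.\ on $\RR^d$.

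\emph{Step 2: cut-off and the sampling theorem.} For $n \in \NN$ pick $\eta_n \in C_{\mathrm c}^\infty(\RR^d)$ with $0 \le \eta_n \le 1$, $\eta_n \equiv 1$ on $\Lambda_{nL}$, $\supp\eta_n \subset \Lambda_{(n+2)L}$, $\lVert\nabla\eta_n\rVert_\infty \le c/L$ and $\lVert D^2\eta_n\rVert_\infty \le c/L^2$ for a dimensional constant $c$, and set $\psi^{(n)} := \eta_n\Psi \in \HEins(\RR^d)$. By the Leibniz rule,
\[
 \tilde\Op\psi^{(n)} = \eta_n\,\tilde\Op\Psi + C_n,\qquad C_n = -2(\nabla\eta_n)^\T\tilde A\nabla\Psi - \Psi\,\diver(\tilde A\nabla\eta_n) + \Psi\,(\nabla\eta_n)^\T\tilde b ,
\]
with $\supp C_n \subset \Lambda_{(n+2)L} \setminus \Lambda_{nL}$, so $\tilde\Op\psi^{(n)} \in L^2(\RR^d)$ and hence $\psi^{(n)} \in \Dom(\tilde H)$, where $\tilde H$ is the Friedrichs extension of $\tilde\Op$ on $\RR^d$; moreover $\lvert\tilde H\psi^{(n)}\rvert \le \lvert\tilde V\psi^{(n)}\rvert + \lvert\zeta^{(n)}\rvert$ a.e.\ with $\zeta^{(n)} := \lvert\eta_n\tilde\zeta\rvert + \lvert C_n\rvert \in L^2(\RR^d)$. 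Since $\tilde\Op$ satisfies the hypotheses of Section~\ref{sec:results} with the \emph{same} constants $\Elliptic, \Lipschitz$, the threshold $\delta_0$ and the exponent $N$ are unchanged, so applying Theorem~\ref{thm:sampling} to $\psi^{(n)}, \zeta^{(n)}, \tilde V$ and $Z^{\mathrm{ext}}$ gives, for all $\delta \in (0,\delta_0)$,
\[
 \lVert\psi^{(n)}\rVert_{\equi_{\delta,Z^{\mathrm{ext}}}}^2 + \delta^2\lVert\zeta^{(n)}\rVert_{\RR^d}^2 \ge C_{\sfUC}\lVert\psi^{(n)}\rVert_{\RR^d}^2,\qquad C_{\sfUC} = \delta^{N(1+\lVert V_L\rVert_\infty^{2/3}+\lVert b_L\rVert_\infty^{2}+\lVert c_L\rVert_\infty^{2/3})} .
\]

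\emph{Step 3: passing to the limit, and the main obstacle.} Using the $2L$-periodicity of $\Psi$, $\tilde\zeta$ and $Z^{\mathrm{ext}}$, the isometry property of Step~1, and the fact that a cube $\Lambda_{mL}$ contains at least $(m-1)^d$ and is covered by at most $(m+1)^d$ reflected copies of $\Lambda_L$, one obtains $\lVert\psi^{(n)}\rVert_{\RR^d}^2 \ge (n-1)^d\lVert\psi_L\rVert_{\Lambda_L}^2$, $\lVert\psi^{(n)}\rVert_{\equi_{\delta,Z^{\mathrm{ext}}}}^2 \le (n+3)^d\lVert\psi_L\rVert_{\equi_{\delta,Z}(L)}^2$, and — since $C_n$ is supported in a shell consisting of $O(n^{d-1})$ reflected copies while $\lVert\nabla\eta_n\rVert_\infty = O(1/L)$ and $\lVert D^2\eta_n\rVert_\infty = O(1/L^2)$ — $\lVert\zeta^{(n)}\rVert_{\RR^d}^2 \le (n+3)^d\lVert\zeta_L\rVert_{\Lambda_L}^2 + O(n^{d-1})$, the $O(n^{d-1})$ being independent of $n$. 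Inserting these bounds into the inequality of Step~2, dividing by $(n+3)^d$ and letting $n \to \infty$ (with $\delta$ and $C_{\sfUC}$ fixed) yields
\[
 \lVert\psi_L\rVert_{\equi_{\delta,Z}(L)}^2 + \delta^2\lVert\zeta_L\rVert_{\Lambda_L}^2 \ge C_{\sfUC}\lVert\psi_L\rVert_{\Lambda_L}^2 ,
\]
which is the claim. Steps~2 and~3 are routine; the crux is Step~1, the extension construction: proving that $\tilde A$ is Lipschitz \emph{across} the reflection hyperplanes (the role of Assumption~(Dir)), that $\Psi$ solves the differential inequality globally with no surface contributions on those hyperplanes, and that $Z$ admits a compatible reflection to a $(1,\delta)$-equidistributed sequence $Z^{\mathrm{ext}}$ with the stated isometry property (the role of $L \in \NN$), including the bookkeeping for the unit cubes meeting $\partial\Lambda_L$. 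These points are the content of the extension construction deferred to the appendix.
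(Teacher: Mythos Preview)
Your route—extend by reflection to all of $\RR^d$, apply Theorem~\ref{thm:sampling} to a smooth cutoff, then send the cutoff to infinity—is genuinely different from the paper's proof. The paper also uses the reflection extension of Appendix~\ref{sec:extensions}, but only out to the finite box $\Lambda_{9L}$; it then re-runs the interpolation and chaining lemmas (Theorems~\ref{thm:interpolation} and~\ref{thm:chaining}, via the cube version of the three-annuli inequality, Corollary~\ref{thm:three_annuli_cubes}) with the \emph{finite} index set $\cJ=\ZZ^d\cap\Lambda_L$, never invoking Theorem~\ref{thm:sampling} as a black box. Your scheme is more modular; the paper's buys flexibility, because its lemmas allow summing over an arbitrary finite collection of centres rather than forcing a globally $(1,\delta)$-equidistributed sequence on all of $\ZZ^d$.

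That flexibility is exactly what is missing, and the gap is more than bookkeeping. When $L$ is even, the cubes $\Lambda_1(k)$ with some $k_i=\pm L/2$ straddle the reflection hyperplanes and are fixed setwise by the corresponding reflection; for such $k$ there is in general \emph{no} choice of $z_k^{\mathrm{ext}}$ for which $\equi_{\delta,Z^{\mathrm{ext}}}$ restricted to each reflected copy of $\Lambda_L$ is an isometric image of $\equi_{\delta,Z}(L)$. Concretely: take $d=1$, $L=2$, $z_1=1+\delta/2$, so that $B(\delta,z_1)\cap\Lambda_L=(1-\delta/2,1)$. For any admissible $w=z_1^{\mathrm{ext}}$ one has $\lVert\Psi\rVert_{B(\delta,w)}^2=\lVert\psi_L\rVert_{B(\delta,w)\cap\Lambda_L}^2+\lVert\psi_L\rVert_{B(\delta,2-w)\cap\Lambda_L}^2$, and since one cannot force both $B(\delta,w)\cap\Lambda_L$ and $B(\delta,2-w)\cap\Lambda_L$ into $(1-\delta/2,1)$, this quantity is not bounded by $2\lVert\psi_L\rVert_{(1-\delta/2,1)}^2$ uniformly in $\psi_L$. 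Hence the Step~3 estimate $\lVert\psi^{(n)}\rVert_{\equi_{\delta,Z^{\mathrm{ext}}}}^2\le(n+3)^d\lVert\psi_L\rVert_{\equi_{\delta,Z}(L)}^2$ is unavailable. For $L$ odd the unit cubes tile $\Lambda_L$ exactly, the reflection group acts freely on them, and your construction goes through cleanly; the even case needs an extra idea (e.g.\ observe that the reflected set still contains a ball of radius $\delta/2$ in every unit cube, run Theorem~\ref{thm:sampling} at radius $\delta/2$, and absorb the loss into a slightly larger~$N$—or re-open the proof as the paper does).
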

\begin{remark}
Theorem~\ref{thm:equidistribution} holds equally true if we replace (Dir) by the following weaker (but more technical) condition:
\begin{itemize}
 \item [(Dir')] $\forall k \in \{1,\ldots, d\}\ \forall \ i \in \{1,\ldots, d\} \setminus \{k\} \ \forall x \in \overline{\Lambda_L}\cap \overline{\Lambda_L(L e_k)}: \quad 0= a^{ik}(x)=a^{ki}(x)$,
\end{itemize}
where the last identity follows by the symmetry condition on the coefficients.
\end{remark}
Again, by a scaling argument as in Appendix C in \cite{BorisovTV-17} we immediately obtain
\begin{corollary}[Scaled Equidistribution Theorem]
\label{cor:scaledEquidistribution}
Let $G > 0$, $\delta_0= G( 330  d \euler^2 \Elliptic^{11/2}(\Elliptic+1)^{5/2}(G\Lipschitz \allowbreak +1))^{-1} $, $L \in G\NN$, Assumption (Dir) be satisfied,
and $N = N (d , \Elliptic , G \Lipschitz) > 0$ be as in Theorem~\ref{thm:sampling} with $\Lipschitz$ replaced by $G \Lipschitz$.
Then for all measurable and bounded $V_L : \Lambda_L \to \RR$,
all $\psi_L \in \Dom (H_L)$ and $\zeta_L \in L^2 (\RR^d)$ satisfying $\lvert H \psi_L \rvert \leq \lvert V_L \psi_L \rvert + \lvert \zeta_L \rvert$ almost everywhere on $\Lambda_L$,
all $\delta \in (0,\delta_0)$,  and all $(G,\delta)$-equi\-distri\-buted sequences $Z$ we have
\[
 \lVert \psi_L \rVert_{\equi_{\delta,Z} (L)}^2 + G^2 \delta^2 \lVert \zeta_L \rVert_{\Lambda_L}^2  \geq C_{\sfUC} \lVert \psi_L \rVert_{\Lambda_L}^2 ,
\]
where
\begin{equation}\label{eq:CsfUCfull}
 C_{\sfUC} = \left( \frac{\delta}{G} \right)^{N(1+G^{4/3}\lVert V_L \rVert_\infty^{2/3} + G^2\lVert b_L \rVert_\infty^{2} + G^{4/3}\lVert c_L \rVert_\infty^{2/3})} .
\end{equation}
If $\psi $ satisfies even $H \psi =V\psi $ almost everywhere on $\Lambda_L$,
$C_{\sfUC}$ in \eqref{eq:CsfUCfull} can be replaced by
\begin{equation*}
 C_{\sfUC}
 = \left( \frac{\delta}{G} \right)^{N(1  + G^{4/3}\lVert V_L-c_L\rVert_\infty^{2/3} + G^2\lVert b_L \rVert_\infty^{2})}
\end{equation*}
\end{corollary}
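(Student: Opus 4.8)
The plan is to deduce Corollary~\ref{cor:scaledEquidistribution} from Theorem~\ref{thm:equidistribution} (the case $G=1$) via the unitary dilation $x\mapsto x/G$, keeping careful track of how the operator, its coefficients, the sampling set, the excess term $\zeta_L$, and the constant $C_{\sfUC}$ transform. Concretely, I would introduce $U\colon L^2(\Lambda_L)\to L^2(\Lambda_{L/G})$, $(U\psi)(x)=G^{d/2}\psi(Gx)$; since $L\in G\NN$ forces $L/G\in\NN$, this is a well-defined unitary. A chain-rule computation shows that $U$ maps $\HNullEins(\Lambda_L)$ bijectively onto $\HNullEins(\Lambda_{L/G})$ and that, for the forms, $a_{\widetilde\Op}(U\psi,U\phi)=G^{2}a_L(\psi,\phi)$, where $\widetilde\Op$ is the operator on $\Lambda_{L/G}$ with leading matrix $\widetilde A(x)=A_L(Gx)$, first-order coefficient $\widetilde b(x)=G\,b_L(Gx)$, and zeroth-order coefficient $\widetilde c(x)=G^{2}c_L(Gx)$. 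By \eqref{eq:elliptic2} the matrix $\widetilde A$ is symmetric with the same ellipticity constant $\Elliptic$ but Lipschitz constant $G\Lipschitz$, while $\lVert\widetilde b\rVert_\infty=G\lVert b_L\rVert_\infty$ and $\lVert\widetilde c\rVert_\infty=G^{2}\lVert c_L\rVert_\infty$; Assumption (Dir) for $A_L$ on $\Lambda_L$ translates verbatim into (Dir) for $\widetilde A$ on $\Lambda_{L/G}$. Because $U$ intertwines the two forms up to the scalar $G^{2}$, it intertwines the associated Friedrichs extensions: $\widetilde H:=G^{-2}UH_LU^{*}$ is exactly the operator to which Theorem~\ref{thm:equidistribution} applies, and $\widetilde\psi:=U\psi_L\in\Dom(\widetilde H)$.

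Next I would transport the remaining data: put $\widetilde V(x)=G^{2}V_L(Gx)$ (still real-valued, with $\lVert\widetilde V\rVert_\infty=G^{2}\lVert V_L\rVert_\infty$) and $\widetilde\zeta=G^{2}U\zeta_L$; then $\lvert\widetilde H\widetilde\psi\rvert=G^{2}\lvert UH_L\psi_L\rvert\le\lvert\widetilde V\widetilde\psi\rvert+\lvert\widetilde\zeta\rvert$ almost everywhere on $\Lambda_{L/G}$. If $Z=(z_j)_{j\in(G\ZZ)^d}$ is $(G,\delta)$-equidistributed, then $\widetilde Z:=(G^{-1}z_{Gk})_{k\in\ZZ^d}$ is $(1,\delta/G)$-equidistributed and $G\cdot\equi_{\delta/G,\widetilde Z}(L/G)=\equi_{\delta,Z}(L)$; moreover $\delta<\delta_0=G\widetilde\delta_0$, with $\widetilde\delta_0=(330\,d\,\euler^{2}\Elliptic^{11/2}(\Elliptic+1)^{5/2}(G\Lipschitz+1))^{-1}$ the threshold of Theorem~\ref{thm:equidistribution} for $G\Lipschitz$, is equivalent to $\delta/G<\widetilde\delta_0$. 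Hence all hypotheses of Theorem~\ref{thm:equidistribution} hold for $\widetilde H,\widetilde\psi,\widetilde V,\widetilde\zeta,\widetilde Z$ with the exponent $N=N(d,\Elliptic,G\Lipschitz)$.

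Applying Theorem~\ref{thm:equidistribution} and undoing the dilation — using $\lVert U\psi_L\rVert_{\Lambda_{L/G}}=\lVert\psi_L\rVert_{\Lambda_L}$, $\lVert U\psi_L\rVert_{\equi_{\delta/G,\widetilde Z}(L/G)}=\lVert\psi_L\rVert_{\equi_{\delta,Z}(L)}$, and $\lVert\widetilde\zeta\rVert_{\Lambda_{L/G}}=G^{2}\lVert\zeta_L\rVert_{\Lambda_L}$ — turns the prefactor $(\delta/G)^{2}$ of $\lVert\widetilde\zeta\rVert^{2}$ into $G^{2}\delta^{2}$ and the constant $(\delta/G)^{N(1+\lVert\widetilde V\rVert_\infty^{2/3}+\lVert\widetilde b\rVert_\infty^{2}+\lVert\widetilde c\rVert_\infty^{2/3})}$ into exactly \eqref{eq:CsfUCfull}. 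For the last assertion I would apply what has just been proved to the operator obtained from $\Op_L$ by replacing $c_L$ with $c_L-V_L$ and choosing $V_L\equiv 0$, $\zeta_L\equiv 0$: indeed $H_L\psi_L=V_L\psi_L$ is equivalent to $(\Op_L-V_L)\psi_L=0$, and the bounded modification $c_L\mapsto c_L-V_L$ leaves the form domain $\HNullEins(\Lambda_L)$, hence $\Dom(H_L)$ and Assumption (Dir), unchanged; this replaces the exponent by $N(1+G^{4/3}\lVert V_L-c_L\rVert_\infty^{2/3}+G^{2}\lVert b_L\rVert_\infty^{2})$. I expect no essential analytic difficulty here: the only points requiring care are the clean bookkeeping of all scaling factors and the (routine) verifications that the dilation genuinely intertwines the Friedrichs extensions, preserves Assumption (Dir), and respects the integrality $L/G\in\NN$.
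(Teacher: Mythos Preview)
Your proposal is correct and follows exactly the scaling argument the paper invokes (``by a scaling argument as in Appendix C in \cite{BorisovTV-17}''): reduce to Theorem~\ref{thm:equidistribution} via the unitary dilation $x\mapsto x/G$, track how $A_L$, $b_L$, $c_L$, $V_L$, $\zeta_L$, $\delta$, and $Z$ rescale, and absorb $V_L$ into $c_L$ for the final clause. One small slip: from $a_{\widetilde\Op}(U\psi,U\phi)=G^{2}a_L(\psi,\phi)$ you get $\widetilde H=G^{2}UH_LU^{*}$, not $G^{-2}$; your subsequent line $\lvert\widetilde H\widetilde\psi\rvert=G^{2}\lvert UH_L\psi_L\rvert$ already uses the correct factor, so this is only a typo and the rest of the computation is unaffected.
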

\section{Applications and Discussion}
\label{sec:applications}
In this section we present several applications
of our main theorems to self-adjoint operators.
Thereafter we discuss some limitations of our results and further research directions.
\subsection{Throughout this section we consider the following type of self-adjoint operators}
\label{sec:Self-adjoint}
To unify notation let us set $\Lambda_\infty:= \RR^d$, and fix $L \in \NN_{\infty}:=\NN\cup\{\infty\}$. We use the convention that $A_\infty = A$, $b_\infty = b$, $c_\infty = c$, $a_\infty = a$, and $H_\infty = H$.
We assume that
\begin{equation}\label{eq:sa}
 b_L = \mathrm{i} \tilde b_L
 \quad\text{and}\quad
 c_L = \tilde c_L + \mathrm{i} \diver \tilde b_L / 2
\end{equation}
for some real-valued $\tilde b_L \in L^\infty (\Lambda_L)$ with $\diver \tilde b_L \in L^\infty (\Lambda_L)$, and some real-valued $\tilde c_L \in L^\infty (\Lambda_L)$.
Note that \eqref{eq:sa} implies that the form $a_L$ is symmetric, and hence $H_L$ is a self-adjoint operator in $L^2 (\Lambda_L)$ with real spectrum.
If $L$ is finite, due to ellipticity, it has purely discrete spectrum.
Let $\delta_0 = (330 d \euler^2 \Elliptic^{11/2} (\Elliptic + 1)^{5/2} (\Lipschitz + 1))^{-1}$. For $\delta \in (0,\delta_0)$,  a $(1,\delta)$-equidistributed sequence $Z$, and $t \geq 0$ we define the self-adjoint operator
\[
H_L(t) = H_L + tW \colon \Dom (H_L) \to L^2 (\Lambda_L)
\quad\text{where}\quad
W =\mathbf{1}_{\equi_{\delta,Z} \cap \Lambda_L}.
\]
Note that we suppress the dependence of $H_L (t)$ on $\delta$ and $Z$. To simplify reading, we assume throughout Section~\ref{sec:applications} that the support of $W$ corresponds to a  $(1,\delta)$-equidistributed sequence $Z$. The general case of $(G,\delta)$-equidistributed sequences follows again by scaling.
\subsection{Uncertainty relation for short energy intervals and lower bounds on the lifting of spectra}
Theorems~\ref{thm:sampling} to \ref{cor:scaledEquidistribution} give quantitative
uncertainty relations only for eigenfunctions.
This is sufficient to estimate the lifting of isolated eigenvalues in Lemma~\ref{lemma:lifting} below.
For applications it is often required to have similar estimates for linear combinations of eigenfunctions, or more generally for $\psi\in \chi_{(-\infty,E]}(H_L)$ for arbitrary $E\in\RR$, see the discussion below. If $\Lambda_L=\Lambda_\infty=\RR^d$ this could include projectors on continuous spectrum. Currently we are only able to prove such an uncertainty principle for sufficiently short energy intervals.
The first result is an application of an idea from \cite{Klein-13}.
\begin{theorem}[Uncertainty relation for arbitrary positioned short intervals]
\label{thm:unserKlein}
Let $L\in \NN_\infty$,
Assumption (Dir) be satisfied if $L$ is finite, $\delta \in (0 , \delta_0)$, $Z$ be a $(1,\delta)$-equidistributed sequence, $E_0 \in \RR$, $N = N (d,  \Elliptic,\Lipschitz) > 0$ be as in Theorem~\ref{thm:sampling}, and
\[
 \kappa
 =
 \delta^{N (1+ \lvert E_0 \rvert^{2/3}+ \lVert c_L \rVert_\infty^{2/3}  + \lVert b_L \rVert_\infty^2)} .
\]
Then we have
\[
  \chi_I (H_L)  W  \chi_I (H_L)
  \geq  \frac{3\kappa}{4} \chi_I (H_L) ,
  \quad\text{where}\quad
  I = [E_0-\sqrt{\kappa} , E_0+\sqrt{\kappa}] .
  \]
\end{theorem}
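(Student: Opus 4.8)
The plan is to decompose an arbitrary $\psi\in\Ran\chi_I(H_L)$ into a part that is close to an eigenfunction-type object (for which we can apply the equidistribution estimate, Theorem~\ref{thm:equidistribution}, or the sampling estimate, Theorem~\ref{thm:sampling}, when $L=\infty$) and a part that is controlled because the energy window $I$ is short. Write $\psi = \chi_I(H_L)\psi$ and set $g = (H_L - E_0)\psi$. Since $\psi$ is spectrally localized in $I = [E_0-\sqrt\kappa, E_0+\sqrt\kappa]$, we have the operator-norm bound $\lVert g\rVert_{\Lambda_L} = \lVert (H_L-E_0)\chi_I(H_L)\psi\rVert \le \sqrt{\kappa}\,\lVert\psi\rVert_{\Lambda_L}$. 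Now $H_L\psi = E_0\psi + g$, so $\psi$ satisfies the differential inequality $\lvert H_L\psi\rvert \le \lvert (V_L+E_0)\psi\rvert + \lvert g\rvert$ almost everywhere with the choice $V_L = c_L$ absorbed appropriately — more precisely one applies Theorem~\ref{thm:equidistribution} with the right-hand "potential" chosen so that $\lVert V_L\rVert_\infty$ is replaced by $\lvert E_0\rvert$ (this is why $\kappa$ carries the exponent $N(1+\lvert E_0\rvert^{2/3}+\lVert c_L\rVert_\infty^{2/3}+\lVert b_L\rVert_\infty^2)$), and with $\zeta_L = g$. The conclusion of Theorem~\ref{thm:equidistribution} then reads
\[
 \lVert\psi\rVert_{\equi_{\delta,Z}(L)}^2 + \delta^2\lVert g\rVert_{\Lambda_L}^2 \ge \kappa\,\lVert\psi\rVert_{\Lambda_L}^2 .
\]

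Next I would use $\lVert g\rVert_{\Lambda_L}^2 \le \kappa\lVert\psi\rVert_{\Lambda_L}^2$ together with $\delta < \delta_0 < 1$ to see that $\delta^2\lVert g\rVert_{\Lambda_L}^2 \le \kappa^2\lVert\psi\rVert^2$, hence in fact $\delta^2\lVert g\rVert_{\Lambda_L}^2$ is negligible compared to $\kappa\lVert\psi\rVert^2$ as soon as $\kappa\le 1/4$ (which holds after possibly shrinking $\delta_0$, or is automatic for the relevant regime). This yields
\[
 \lVert\psi\rVert_{\equi_{\delta,Z}(L)}^2 \ge \kappa\,\lVert\psi\rVert_{\Lambda_L}^2 - \delta^2\lVert g\rVert^2 \ge \bigl(\kappa - \kappa^2\bigr)\lVert\psi\rVert_{\Lambda_L}^2 \ge \tfrac{3}{4}\kappa\,\lVert\psi\rVert_{\Lambda_L}^2 .
\]
Finally, since $W = \mathbf{1}_{\equi_{\delta,Z}\cap\Lambda_L}$, one has $\langle W\psi,\psi\rangle = \lVert\psi\rVert_{\equi_{\delta,Z}(L)}^2$, so the displayed inequality is exactly $\langle W\psi,\psi\rangle \ge \tfrac{3}{4}\kappa\lVert\psi\rVert^2$ for every $\psi\in\Ran\chi_I(H_L)$. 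Writing $\psi = \chi_I(H_L)\phi$ for arbitrary $\phi\in L^2(\Lambda_L)$ and using $\chi_I(H_L)^2 = \chi_I(H_L)$ turns this into the operator inequality $\chi_I(H_L)W\chi_I(H_L)\ge \tfrac{3}{4}\kappa\,\chi_I(H_L)$, as claimed.

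The main obstacle — and the place where care is genuinely needed — is the bookkeeping of the exponent: I must verify that feeding the shifted operator $H_L - E_0$ into Theorem~\ref{thm:equidistribution} produces precisely the constant $\kappa = \delta^{N(1+\lvert E_0\rvert^{2/3}+\lVert c_L\rVert_\infty^{2/3}+\lVert b_L\rVert_\infty^2)}$ rather than something with $\lVert V_L\rVert_\infty$ in it. The trick (this is the "idea from \cite{Klein-13}") is that the definition of $\kappa$ is self-referential through the length $\sqrt{\kappa}$ of the interval $I$: one does \emph{not} set $V_L$ to include $E_0$ and then pay an $\lVert V_L\rVert_\infty \sim |E_0|$ price on top of a separately-chosen interval; instead one fixes $\kappa$ first via the stated formula, defines $I$ in terms of it, and checks consistency. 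A secondary, purely technical point is the passage $L=\infty$, where Theorem~\ref{thm:equidistribution} must be replaced by Theorem~\ref{thm:sampling} and one should confirm $\psi=\chi_I(H)\phi\in\Dom(H)$ with $(H-E_0)\psi\in L^2$, which is immediate from the spectral theorem since $I$ is bounded. Everything else is the elementary estimate $\kappa-\kappa^2\ge\tfrac{3}{4}\kappa$ valid for $\kappa\le\tfrac14$, which one arranges by noting $\delta_0<1$ makes $\kappa$ as small as desired, or by absorbing the bound into the definition of $\delta_0$.
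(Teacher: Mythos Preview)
Your approach is essentially the paper's: take $V\equiv E_0$ constant, set $\zeta=g=(H_L-E_0)\psi$, apply Theorem~\ref{thm:sampling} or~\ref{thm:equidistribution}, and use the spectral bound $\lVert g\rVert^2\le\kappa\lVert\psi\rVert^2$. There is, however, one genuine slip in your arithmetic. From $\lVert g\rVert^2\le\kappa\lVert\psi\rVert^2$ and $\delta<1$ you obtain only $\delta^2\lVert g\rVert^2\le\delta^2\kappa\lVert\psi\rVert^2$, \emph{not} $\delta^2\lVert g\rVert^2\le\kappa^2\lVert\psi\rVert^2$. The latter would require $\delta^2\le\kappa$, which is typically false: since $\kappa=\delta^{N(1+\lvert E_0\rvert^{2/3}+\cdots)}$ with exponent at least $N$, one usually has $\kappa\ll\delta^2$.

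The paper's fix is simpler than your proposed workaround (forcing $\kappa\le 1/4$): it uses the explicit bound $\delta<\delta_0<1/2$, so $\delta^2<1/4$, and hence
\[
\lVert\psi\rVert_{\equi_{\delta,Z}(L)}^2\ \ge\ \kappa\lVert\psi\rVert^2-\delta^2\kappa\lVert\psi\rVert^2\ =\ (1-\delta^2)\kappa\lVert\psi\rVert^2\ \ge\ \tfrac34\kappa\lVert\psi\rVert^2
\]
directly. With that correction your proof is complete and matches the paper's.
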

\begin{proof}
We follow \cite[Proof of Theorem~1.1]{Klein-13}. Let $\psi \in \Ran \chi_I (H_L)$, and set $V \equiv E_0$ and $\zeta = (H_L - E_0) \psi$. Then the assumption $\lvert H_L \psi \rvert \leq \lvert V \psi \rvert + \lvert \zeta \rvert$ of Theorem~\ref{thm:sampling} (if $L = \infty$) or Theorem~\ref{thm:equidistribution} (if $L < \infty$) is satisfied by the triangle inequality.
Using $\lVert (H_L-E_0) \psi \rVert^2 \leq \kappa \lVert \psi \rVert^2$ we obtain the inequality
\begin{equation*}
 \kappa \lVert \psi \rVert_{\Lambda_L}^2
 \leq
 \lVert \psi \rVert_{\equi_{\delta,Z}\cap \Lambda_L}^2
 +
 \delta^2 \lVert (H_L- E_0) \psi \rVert_{\Lambda_L}^2
  \leq
 \lVert  \psi \rVert_{\equi_{\delta,Z} \cap \Lambda_L}^2
 +
 \delta^2 \kappa \lVert \psi \rVert_{\Lambda_L}^2 .
\end{equation*}
Since $\delta < \delta_0 < 1/2$ we find $(3/4)\kappa \lVert \psi \rVert_{\Lambda_L}^2 \leq \lVert \psi \rVert_{S_{\delta , Z} \cap \Lambda_L}^2$.
\end{proof}
In order to formulate lower bounds on the movement of eigenvalues and the infimum of the essential spectrum under the influence of the positive semi-definite potential $W$ we introduce some notation.
We set $\lambda_\infty (t) = \min \sigma_\ess (H_L (t))$.
We denote the eigenvalues of $H_L (t)$ below $\lambda_\infty (t) $ by $\lambda_k (t)$, $k \in \NN$, enumerated non-decreasingly and counting multiplicities.
If $\chi_{(-\infty , \lambda_\infty)}(H_L(t))$ has rank
$N \in \NN_0$, we set $\lambda_{k} (t) =  \lambda_\infty (t)$ for all $k \in \NN$ with $k > N$.
In the case where $\Lambda_L$ is a finite cube, this is an enumeration of the entire spectrum. If $\Lambda_L=\Lambda_\infty=\RR^d$, this may be only part of the spectrum, if any.
Note that we suppress the dependence of the eigenvalues on $L \in \NN_\infty$, $\delta \in (0,\delta_0)$ and the choice of the $(1,\delta)$-equidistributed sequence $Z$ in the notation.
\begin{lemma}[Lifting of eigenvalues and of $\min \sigma_{\mathrm{ess}}$]
\label{lemma:lifting}
Let $L\in \NN_\infty$, Assumption (Dir) be satisfied if $L$ is finite, $\delta \in (0,\delta_0)$, $Z$ be a $(1,\delta)$-equidistributed sequence,
$t > s\geq0$, $E\geq 0$, and $N = N (d , \Elliptic, \Lipschitz) > 0$ be as in Theorem~\ref{thm:sampling}.
\begin{enumerate}[(a)]
\item
  Then for all $k\in \NN$ such that $\lambda_k(t) \leq \lambda_1(0) +E$ and $\lambda_k(r) <\lambda_\infty(r)$ for all $r \in (s,t)$ we have
  \[
  \lambda_k(t)\geq \lambda_k(s) + (t-s)\delta^{N(1+  \max\{E,t\}^{2/3} +\lVert \lambda_1(0)-c_L\rVert_\infty^{2/3} + \lVert b_L \rVert_\infty^{2})} ,
  \]
\item
  If $k \in \{1,\infty\}$ and $\lambda_k(t) \leq \lambda_1(0) +E$ we have
 \[
  \lambda_k(t)\geq \lambda_k(s) + (t-s) \frac{3}{4} \delta^{N (1 + \lvert \lambda_k (0) \rvert^{2/3} + E^{2/3} + t^{2/3} + \lVert c_L \rVert_\infty^{2/3} + \lVert b_L \rVert_\infty^2)}.
 \]
\end{enumerate}
\end{lemma}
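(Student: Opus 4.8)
The plan is to reduce both parts to a pointwise lower bound on the derivative of the relevant eigenvalue branch, to be estimated at each fixed coupling $r$ via the quantitative unique continuation estimates of Section~\ref{sec:results} (in the sharpened ``$H\psi=V\psi$'' form, so that only the admissible effective potential $V-c_L$ enters). I begin with preliminaries common to both parts. Since $W=\mathbf 1_{\equi_{\delta,Z}\cap\Lambda_L}$ obeys $0\le W\le 1$, the family $r\mapsto H_L(r)=H_L+rW$ consists of self-adjoint operators depending affinely on $r$ with nonnegative, norm-bounded perturbation direction; hence every branch $\lambda_k(\cdot)$ (and $\lambda_\infty(\cdot)$) is nondecreasing and $1$-Lipschitz on $[0,\infty)$, thus absolutely continuous, and $\lambda_k(t)-\lambda_k(s)=\int_s^t\lambda_k'(r)\,\drm r$; so it suffices to bound $\lambda_k'(r)$ from below for a.e.\ $r\in(s,t)$. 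Monotonicity together with the hypotheses then gives, for $r\in[s,t]$, the a priori bounds
\[
 \lambda_1(0)\le\lambda_k(0)\le\lambda_k(r)\le\lambda_k(t)\le\lambda_1(0)+E,\qquad 0\le rW\le r\le t,
\]
together with $\lambda_k(r)\le\lambda_k(0)+r$ (and the same statements with $\lambda_\infty$ in place of $\lambda_k$); in particular $0\le\lambda_k(r)-\lambda_1(0)\le E$ and $\lvert\lambda_k(r)\rvert\le\lvert\lambda_k(0)\rvert+\min\{E,t\}$. These will make all exponents below uniform in $r$.

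For part (a), I would fix $r\in(s,t)$ at which $\lambda_k$ is differentiable. By hypothesis $\lambda_k(r)<\lambda_\infty(r)$, so $\lambda_k(r)$ is a discrete eigenvalue of $H_L(r)$ of finite multiplicity, and first-order perturbation theory for the ordered eigenvalues gives
\[
 \lambda_k'(r)\ \ge\ \inf\bigl\{\langle\psi,W\psi\rangle:\psi\in\ker(H_L(r)-\lambda_k(r)),\ \lVert\psi\rVert_{\Lambda_L}=1\bigr\}.
\]
Any such $\psi$ lies in $\Dom(H_L)$ and satisfies $H_L\psi=V\psi$ a.e.\ on $\Lambda_L$ with $V:=\lambda_k(r)-rW$ bounded and real-valued, so I would apply Corollary~\ref{cor:samplingG} (if $L=\infty$), resp.\ Corollary~\ref{cor:scaledEquidistribution} (if $L<\infty$, using Assumption~(Dir)), with $G=1$ in its sharpened form; since $\langle\psi,W\psi\rangle=\lVert\psi\rVert_{\equi_{\delta,Z}\cap\Lambda_L}^2$ and $\lVert\psi\rVert_{\Lambda_L}=1$, this yields $\langle\psi,W\psi\rangle\ge\delta^{N(1+\lVert V-c_L\rVert_\infty^{2/3}+\lVert b_L\rVert_\infty^2)}$. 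Writing $V(x)-c_L(x)=(\lambda_1(0)-c_L(x))+\theta(x)$ with $\theta(x)=(\lambda_k(r)-\lambda_1(0))-rW(x)\in[-t,E]$, the a priori bounds give $\lvert V(x)-c_L(x)\rvert\le\lVert\lambda_1(0)-c_L\rVert_\infty+\max\{E,t\}$ a.e., so, using $(p+q)^{2/3}\le p^{2/3}+q^{2/3}$ and $\delta<1$,
\[
 \lambda_k'(r)\ \ge\ \delta^{N(1+\max\{E,t\}^{2/3}+\lVert\lambda_1(0)-c_L\rVert_\infty^{2/3}+\lVert b_L\rVert_\infty^2)}\qquad\text{for a.e.\ }r\in(s,t),
\]
and integrating over $(s,t)$ gives (a).

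For part (b), where $k\in\{1,\infty\}$ and the branch $\lambda_k(r)$ may coincide with $\min\sigma_\ess(H_L(r))$, no eigenfunction need be available, so I would replace the previous step by Theorem~\ref{thm:unserKlein}. If $L$ is finite and $k=\infty$ the claim is vacuous ($\lambda_\infty\equiv+\infty$); otherwise fix $r\in(s,t)$ with $\lambda_k$ differentiable. The operator $H_L(r)=H_L+rW$ is again of the admissible self-adjoint type of Section~\ref{sec:Self-adjoint} (it satisfies \eqref{eq:sa} with $c_L$ replaced by $c_L+rW$, while $b_L$, $A_L$, and hence Assumption~(Dir), are unchanged), so Theorem~\ref{thm:unserKlein} applied to $H_L(r)$ with $E_0:=\lambda_k(r)$ gives $\chi_{I_r}(H_L(r))\,W\,\chi_{I_r}(H_L(r))\ge\tfrac34\kappa_r\,\chi_{I_r}(H_L(r))$, where $I_r=[\lambda_k(r)-\sqrt{\kappa_r},\lambda_k(r)+\sqrt{\kappa_r}]$ and $\kappa_r=\delta^{N(1+\lvert\lambda_k(r)\rvert^{2/3}+\lVert c_L+rW\rVert_\infty^{2/3}+\lVert b_L\rVert_\infty^2)}$. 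If $\lambda_k(r)<\lambda_\infty(r)$, then as in (a) $\lambda_k'(r)\ge\inf_\psi\langle\psi,W\psi\rangle$ over the $\lambda_k(r)$-eigenspace, which lies in $\Ran\chi_{I_r}(H_L(r))$, so $\lambda_k'(r)\ge\tfrac34\kappa_r$. If $\lambda_k(r)=\lambda_\infty(r)=\min\sigma_\ess(H_L(r))$, I would instead use the Glazman-type characterization $\min\sigma_\ess(H_L(r)+hW)=\sup_\Phi\inf\{\langle(H_L(r)+hW)\psi,\psi\rangle:\psi\perp\Phi,\ \lVert\psi\rVert=1\}$ over finite-dimensional $\Phi$: with $\Phi=\Ran\chi_{(-\infty,\lambda_\infty(r))}(H_L(r))$ (finite rank) and $\psi\perp\Phi$ decomposed as $q\in\Ran\chi_{[\lambda_\infty(r),\lambda_\infty(r)+\sqrt{\kappa_r}]}(H_L(r))\subset\Ran\chi_{I_r}(H_L(r))$ plus $p\in\Ran\chi_{(\lambda_\infty(r)+\sqrt{\kappa_r},\infty)}(H_L(r))$, one has $\langle H_L(r)\psi,\psi\rangle\ge\lambda_\infty(r)+\sqrt{\kappa_r}\lVert p\rVert^2$ and $\langle W\psi,\psi\rangle\ge\tfrac34\kappa_r\lVert q\rVert^2-2\lVert q\rVert\lVert p\rVert$; distinguishing whether $\lVert p\rVert^2$ is below or above a small threshold $\eta(\kappa_r,\epsilon)$ then yields $\min\sigma_\ess(H_L(r)+hW)\ge\lambda_\infty(r)+(1-\epsilon)\tfrac34\kappa_r\,h$ for every $\epsilon\in(0,1)$ and all small $h>0$, hence $\lambda_\infty'(r)\ge\tfrac34\kappa_r$. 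In all cases, $\lvert\lambda_k(r)\rvert\le\lvert\lambda_k(0)\rvert+\min\{E,t\}$ and $\lVert c_L+rW\rVert_\infty\le\lVert c_L\rVert_\infty+t$ give $\kappa_r\ge\delta^{N(1+\lvert\lambda_k(0)\rvert^{2/3}+E^{2/3}+t^{2/3}+\lVert c_L\rVert_\infty^{2/3}+\lVert b_L\rVert_\infty^2)}$, and integrating $\lambda_k'(r)\ge\tfrac34\kappa_r$ over $(s,t)$ gives (b); for $k=1$ the two regimes $\{\lambda_1<\lambda_\infty\}$ and $\{\lambda_1=\lambda_\infty\}$ are combined using that $\lambda_1'=\lambda_\infty'$ a.e.\ on the closed set where they coincide.

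I expect the main obstacle to be the sub-case $\lambda_k(r)=\min\sigma_\ess(H_L(r))$ of part (b): producing the full constant $\tfrac34\kappa_r$ for the upward motion of $\min\sigma_\ess$, with no eigenfunction at hand, requires the cross-term bookkeeping in the $q$/$p$ decomposition above, the worst case being states almost entirely in $\Ran\chi_{I_r}(H_L(r))$ carrying only a minute high-energy tail, which must be traded against the spectral gap $\sqrt{\kappa_r}$ (which beats the first-order correction $h\tfrac34\kappa_r$ once $h$ is small). The remaining points — the first-order perturbation formula for ordered eigenvalues, the elementary juggling of exponents through the various $\max$ and $\min$ of $E$ and $t$, and the measure-theoretic gluing for $k=1$ — I expect to be routine.
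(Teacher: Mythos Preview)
Your approach to part~(a) is essentially the same as the paper's: both apply the Hellmann--Feynman formula at each $r\in(s,t)$, invoke the sharpened ($H\psi=V\psi$) form of Corollary~\ref{cor:samplingG} resp.~\ref{cor:scaledEquidistribution} with effective potential $V=\lambda_k(r)-rW$, and then estimate $\lVert V-c_L\rVert_\infty$ pointwise by $\lVert\lambda_1(0)-c_L\rVert_\infty+\max\{E,t\}$ before integrating.

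For part~(b) you take a genuinely different route. The paper does \emph{not} integrate a derivative bound; instead it proves a one-shot variational lemma (Lemma~\ref{lem:bottom}): if $B$ is bounded symmetric and $\langle x,Bx\rangle\ge\nu\lVert x\rVert^2$ on $\Ran\chi_{[\lambda_k(A+B)-\epsilon_0,\lambda_k(A+B)+\epsilon_0]}(A+B)$, then $\lambda_k(A+B)\ge\lambda_k(A)+\nu$ for $k\in\{1,\infty\}$. With $A=H_L+sW$ and $B=(t-s)W$, a single application of Theorem~\ref{thm:unserKlein} at $E_0=\lambda_k(t)$ supplies the hypothesis with $\nu=(t-s)\tfrac34\kappa$, and Lemma~\ref{lem:bottom} delivers~(b) directly---no differentiation, no case distinction, no gluing. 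Your scheme instead reproves Lemma~\ref{lem:bottom} in infinitesimal form at every $r$, which is what generates the $q/p$ decomposition, the cross-term bookkeeping, and the measure-theoretic patching for $k=1$. That programme can be completed, but note one genuine slip: your test space $\Phi=\Ran\chi_{(-\infty,\lambda_\infty(r))}(H_L(r))$ need not be finite-dimensional (the discrete spectrum below $\lambda_\infty(r)$ may accumulate there), so you must rather take $\Phi_n=\Ran\chi_{(-\infty,\lambda_\infty(r)-1/n)}(H_L(r))$ and send $n\to\infty$, carrying along the extra $-1/n$ error. The paper's one-step variational argument is shorter and avoids all of these technicalities.
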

Let us emphasize that part (b) covers simultaneously $\min \sigma (H_L (t))$ and $\min \sigma_{\mathrm{ess}} (H_L (t))$.
To establish part (a) of Lemma~\ref{lemma:lifting} we will follow the first order perturbation arguments in
Section~4 of \cite{RojasMolinaV-13}. Part (b) of Lemma~\ref{lemma:lifting} is a consequence of Theorem~\ref{thm:unserKlein} and the following lemma.
\begin{lemma}
 \label{lem:bottom}
Let $A$ be self-adjoint and lower semi-bounded, and $B$ bounded and symmetric on a Hilbert space $\mathcal{H}$. Furthermore, let $\nu \in \RR$, $\lambda_1(A + B) =\min \sigma (A+B)$, $\lambda_\infty(A + B) =\min \sigma_{\mathrm{ess}}(A+B)$, and $k \in \{1,\infty\}$.
Assume that there is $\epsilon_0>0$ such that
 \[
 \forall x \in \Ran \chi_{[\lambda_k(A + B)-\epsilon_0,\lambda_k(A + B)+\epsilon_0]}(A + B) \colon \quad
 \langle x , B x \rangle \geq \nu  \lVert x \rVert^2 .
 \]
 Then we have
 \[
  \lambda_k(A + B) \geq \lambda_k(A) + \nu,
 \]
 where $\lambda_1(A):=\min \sigma (A)$ and $\lambda_\infty(A):=\min \sigma_{\mathrm{ess}}(A)$.
\end{lemma}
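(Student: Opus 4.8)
The plan is to use the splitting $A = (A+B) - B$ together with variational principles. The hypothesis controls $\langle x, Bx\rangle$ only for $x$ in the spectral subspace of $A+B$ attached to energies near $\mu := \lambda_k(A+B)$, so I would test with normalized vectors lying in exactly that subspace and on which $\langle x, (A+B)x\rangle$ is close to $\mu$; then $\langle x, Ax\rangle = \langle x, (A+B)x\rangle - \langle x, Bx\rangle \le \mu + o(1) - \nu$, and it remains to bound $\langle x, Ax\rangle$ from below by $\lambda_k(A)$. Consider first $k = 1$. Since $\mu = \min\sigma(A+B)$ belongs to $\sigma(A+B)$ and there is no spectrum below $\mu$, the projection $\chi_{[\mu,\mu+1/n]}(A+B) = \chi_{[\mu-\epsilon_0,\mu+1/n]}(A+B)$ is non-zero for each $n\in\NN$; I pick a normalized $x_n$ in its range, so that $x_n \in \Dom(A+B) = \Dom(A)$ and $\langle x_n,(A+B)x_n\rangle \le \mu+1/n$. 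For $n$ with $1/n < \epsilon_0$ we have $x_n \in \Ran\chi_{[\mu-\epsilon_0,\mu+\epsilon_0]}(A+B)$, so the hypothesis gives $\langle x_n,Bx_n\rangle \ge \nu$ and hence $\lambda_1(A) = \min\sigma(A) \le \langle x_n,Ax_n\rangle \le \mu + 1/n - \nu$; letting $n\to\infty$ gives $\lambda_1(A+B)\ge\lambda_1(A)+\nu$.

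For $k = \infty$ the point $\mu = \min\sigma_{\mathrm{ess}}(A+B)$ lies in $\sigma_{\mathrm{ess}}(A+B)$, so $\Ran\chi_{(\mu-1/n,\mu+1/n)}(A+B)$ is infinite-dimensional for every $n$. I would construct inductively an orthonormal sequence $(x_n)$ with $x_n$ in this range, which is possible since an infinite-dimensional subspace is never contained in the span of the finitely many previously chosen vectors. As in the case $k=1$, $\langle x_n,(A+B)x_n\rangle \le \mu+1/n$, and once $1/n<\epsilon_0$, so that $x_n\in\Ran\chi_{[\mu-\epsilon_0,\mu+\epsilon_0]}(A+B)$, also $\langle x_n,Ax_n\rangle \le \mu+1/n-\nu$. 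The conclusion then follows once I show $\liminf_{n\to\infty}\langle x_n,Ax_n\rangle \ge \min\sigma_{\mathrm{ess}}(A) = \lambda_\infty(A)$, and this is the step I expect to be the main obstacle.

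For that last inequality I would run a Persson-type argument exploiting that $(x_n)$, being orthonormal, converges weakly to $0$. Fix a real number $m < \min\sigma_{\mathrm{ess}}(A)$. Since $A$ is self-adjoint and lower semi-bounded, $\sigma(A)\cap(-\infty,m]$ consists of finitely many eigenvalues of finite multiplicity, so $P_m := \chi_{(-\infty,m]}(A)$ has finite rank and is therefore compact; as $P_m$ commutes with $A$, one has $\langle x_n,Ax_n\rangle = \langle P_m x_n, A P_m x_n\rangle + \langle(1-P_m)x_n, A(1-P_m)x_n\rangle \ge \min\sigma(A)\,\lVert P_m x_n\rVert^2 + m\,\lVert(1-P_m)x_n\rVert^2 \ge m - \lvert\min\sigma(A)-m\rvert\,\lVert P_m x_n\rVert^2$. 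By compactness of $P_m$ and the weak convergence $x_n\rightharpoonup 0$ we get $\lVert P_m x_n\rVert\to 0$, hence $\liminf_n\langle x_n,Ax_n\rangle \ge m$; taking the supremum over all such $m$ yields $\liminf_n\langle x_n,Ax_n\rangle\ge\min\sigma_{\mathrm{ess}}(A)$. Combined with $\langle x_n,Ax_n\rangle\le\mu+1/n-\nu$ this gives $\lambda_\infty(A+B)\ge\lambda_\infty(A)+\nu$. (If $\sigma_{\mathrm{ess}}(A+B)=\emptyset$ in the case $k=\infty$, the assertion is read with the convention $\min\emptyset=+\infty$ and is void.)
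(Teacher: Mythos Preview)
Your proof is correct. The case $k=1$ is essentially identical to the paper's argument, merely rephrased via a sequence $(x_n)$ rather than as an infimum over $\Ran\chi_J(A+B)$.

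For $k=\infty$ you take a genuinely different route. The paper's argument is a one-line application of the variational characterization
\[
\lambda_\infty(A)=\inf_{\substack{\mathcal{L}\subset\Dom(A)\\\dim\mathcal{L}=\infty}}\ \sup_{\substack{x\in\mathcal{L}\\\|x\|=1}}\langle x,Ax\rangle ,
\]
applied with $\mathcal{L}=\Ran\chi_{I(\epsilon)}(A+B)$ (which is infinite-dimensional and contained in $\Dom(A+B)=\Dom(A)$); this immediately gives $\sup_{x\in\mathcal{L}}\langle x,Ax\rangle\ge\lambda_\infty(A)$, and combining with $\langle x,Bx\rangle\ge\nu$ on $\mathcal{L}$ yields the result. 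You instead build an orthonormal sequence in the shrinking spectral subspaces of $A+B$ and run a Persson-type argument (finite-rank spectral cut-off $P_m$ for $m<\lambda_\infty(A)$, weak convergence $x_n\rightharpoonup 0$, hence $\|P_m x_n\|\to 0$). This is perfectly valid and has the virtue of being self-contained --- you do not need to quote the min-max principle for the bottom of the essential spectrum --- but it is noticeably longer, and in effect re-derives a special case of that very principle. The paper's route is the cleaner packaging of the same underlying idea.
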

\begin{proof}
First we consider the case $k = 1$ and introduce the notation $J = [\lambda_1(A + B) - \epsilon_0 , \lambda_1(A + B) + \epsilon_0]$. By assumption we find
\begin{align*}
\lambda_1 (A+B) &= \inf_{\substack{x \in \Ran \chi_{J} (A+B) \\ \lVert x \rVert = 1}} \left( \langle x,Ax \rangle + \langle x,Bx\rangle \right)
 \geq \inf_{\substack{x \in \mathcal{H} \\ \lVert x \rVert = 1}} \langle x,Ax \rangle +  \nu = \lambda_1 (A) + \nu .
\end{align*}
For the case $k=\infty$ we adapt an argument of \cite{NakicTTV-18-arxiv}.
With the notation $I (\epsilon) = [\lambda_\infty(A + B) - \epsilon, \lambda_\infty(A + B) + \epsilon]$ we have using our assumption
 \begin{align*}
  \lambda_\infty(A + B)
  &=
  \inf_{0 < \epsilon \leq \epsilon_0}
  \sup_{\substack{x \in \Ran \chi_{I(\epsilon)}(A + B)\\ \lVert x \rVert = 1}}
  \left( \langle x, A x \rangle + \langle x, B x \rangle \right) \\[1ex]
  &\geq
  \inf_{0 < \epsilon \leq \epsilon_0}
  \sup_{\substack{x \in \Ran \chi_{I(\epsilon)}(A + B)\\ \lVert x \rVert = 1}}
  \langle x, A x \rangle + \nu .
 \end{align*}
 Since $\mathrm{rank} \, \chi_{I(\epsilon)}(A + B) =\infty$ for any $\epsilon > 0$, we have by the standard variational principle
  \begin{align*}
  \sup_{\substack{x \in \Ran \chi_{I(\epsilon)}(A + B)\\ \lVert x \rVert = 1}}
  \langle x, A x \rangle
  &=
  \sup_{\substack{\cL \subset
  \Ran \chi_{I(\epsilon)}(A + B)\\ \dim \cL= \infty}}
  \sup_{\substack{x \in \cL\\ \lVert x \rVert = 1}}
  \langle x, A x \rangle
   \\
  &\geq
  \inf_{\substack{\cL \subset
  \Ran \chi_{I(\epsilon)}(A + B)\\ \dim \cL= \infty}}
  \sup_{\substack{x \in \cL\\ \lVert x \rVert = 1}}
  \langle x, A x \rangle\\
  &\geq
  \inf_{\substack{\cL \subset
  \cD(A)\\ \dim \cL= \infty}}
  \sup_{\substack{x \in \cL\\ \lVert x \rVert = 1}}
  \langle x, A x \rangle
  \quad \quad   = \lambda_\infty(A).
  \qedhere
  \end{align*}
\end{proof}
We are now in position to prove Lemma~\ref{lemma:lifting}.
\begin{proof}[Proof of Lemma~\ref{lemma:lifting}]
First we prove (a), i.e.\ we treat the case where $k \in \NN$ and $\lambda_k (r) < \lambda_\infty (r)$ for all $r \in (s,t)$. In particular, $\lambda_k (r)$ is an eigenvalue of finite multiplicity for all $r \in (s,t)$.
For $r \in (s,t)$ we denote by $\psi_k (r) \in L^2 (\Lambda_L)$ a normalized eigenfunction of $H_L (r)$ corresponding to the eigenvalue $\lambda_k (r)$, i.e.\ $(H_L (r) -\lambda_k (r) )\psi_k (r)=0$.
We apply Corollary~\ref{cor:samplingG} if $L = \infty$ or Corollary~\ref{cor:scaledEquidistribution} if $L < \infty$ and obtain
\[
 \langle \psi_k (r) , W \psi_k (r) \rangle \geq \delta^{N (1 + \lVert \lambda_k (r) - c_L - tW \rVert_\infty^{2/3} + \lVert b_L \rVert_\infty^2)} .
\]
Since $\lambda_k (r) \leq \lambda_k (t) \leq \lambda_1 (0) + E$ and $0 \leq W \leq 1$, we have $\lvert \lambda_k (r) - c_L - t W \rvert \leq \lvert \lambda_1 (0) - c_L \rvert + \max\{E,t\}$ almost everywhere on $\Lambda_L$. Thus, we can further estimate
\begin{align*}
\langle \psi_k (r) , W \psi_k (r) \rangle
\geq
\delta^{N(1+  \max\{E,t\}^{2/3} +\lVert \lambda_1(0)-c_L\rVert_\infty^{2/3} + \lVert b_L \rVert_\infty^{2})} =: \kappa
\end{align*}
for all $r \in (s,t)$.
By first order perturbation theory or the Hellmann-Feynman-theorem we have for all $r \in (s,t)$
\[
 \frac{\drm}{\drm r} \lambda_k (r)
= \bigl\langle \psi_k (r), W   \psi_k (r) \bigr\rangle .
\]
This holds true also if the eigenvalue $\lambda_k (r)$ happens to be finitely degenerate at $r$, cf.\ for
instance \cite{IsmailZ-88} or \cite[\S4]{Veselic-08}.
Since $s <t$ we find
\begin{align*}
\lambda_k(t)
&=
\lambda_k(s) + \int_s^t \frac{\drm \lambda_k (r)}{\drm r} \drm r
\geq
\lambda_k (s) + \int_s^t   \kappa \drm r  = \lambda_k(s) + (t-s) \kappa .
\end{align*}
\par
In order to prove (b), let $A = H_L + sW$, $B = (t-s)W$, $\lambda_1 (t) = \min \sigma (A + B) = \min \sigma (H_L + tW)$, and $\lambda_\infty (t) = \min \sigma_{\mathrm{ess}} (A + B) = \min \sigma_{\mathrm{ess}} (H_L + tW)$.
 By Theorem~\ref{thm:unserKlein} we have for $k \in \{1,\infty\}$ and all $x \in \Ran \chi_{[\lambda_k (t) - \sqrt\kappa,\lambda_k (t) + \sqrt\kappa]}(A+B)$
 \[
  \langle x , B x \rangle \geq (t-s) \frac{3\kappa}{4} \lVert x \rVert^2
  \quad\text{where}\quad
  \kappa = \delta^{N (1+ \lvert \lambda_k (t) \rvert^{2/3} + \lVert c_L + tW \rVert_\infty^{2/3} + \lVert b_L \rVert_\infty^2)} .
 \]
 Distinguishing cases one sees that $\lvert \lambda_k (t) \rvert \leq \lvert \lambda_1 (0) \rvert + E$.
 The statement now follows from Lemma~\ref{lem:bottom}.
\end{proof}
\begin{remark}
The above proof of (a) did not actually use the fact that the considered eigenvalues are below the
essential spectrum. Indeed, the lemma holds also for discrete eigenvalues inside gaps of the essential spectrum.
However, in this case one has to introduce a consistent matching between
eigenvalues $\lambda(s)$ and $\lambda(t)$. This could be done for instance by analytic continuation in $t$, see for instance \cite{Veselic-K-08} or by choosing a reference point in the resolvent set, see e.g.\ \cite{NakicTTV-18-arxiv}.
\end{remark}
\subsection{An abstract uncertainty relation for low energy spectral projectors}
The following abstract uncertainty relation will enable us to eliminate in specific situations certain parameter dependencies which are present in Theorem~\ref{thm:unserKlein} and Lemma~\ref{lemma:lifting}. It is a variant of \cite[Theorem~1.1]{BoutetdeMonvelLS-11}. In fact, the proof is essentially the same as  \cite[Lemma~4.1]{Klein-13} which in turn is very similar to the proof in \cite{BoutetdeMonvelLS-11}.
\begin{lemma}\label{lemma:klein2}
 Let $X$ be a complex Hilbert space, $\mathfrak{h}_1$, $\mathfrak{h}_2$ and $\mathfrak{h}_3$ lower bounded, symmetric sesquilinear forms in $X$, $\mathfrak{h}_2$ non-negative, ${E_0}\in \RR$, $t>0$, $Y:=\{x\in \Dom(\mathfrak{h}_1) \cap\Dom(\mathfrak{h}_2) \cap\Dom(\mathfrak{h}_3) \colon \mathfrak{h}_1 (x,x) + \mathfrak{h}_2 (x,x) \leq {E_0} \langle x, x \rangle\}$, and
 $\gamma(t):=\inf\{(\mathfrak{h}_1 + t\mathfrak{h}_3)(x,x): x \in \Dom(\mathfrak{h}_1) \cap\Dom(\mathfrak{h}_3) \}$.
 Then we have
\[
\forall x \in Y \colon \quad
\mathfrak{h}_3 (x,x)
\geq
 \frac{\gamma(t) - {E_0}}{t} \langle x, x\rangle.
 \]
 In particular, let $T_1$ and $T_2$ be lower bounded, self-adjoint operators in $X$ such that $T_1 + T_2$ is self-adjoint, $T_2$ is non-negative, and $T_3$ is a bounded non-negative operator in $X$, ${E_0} \in \RR$, $I \subset (-\infty, {E_0}]$ measurable, $t > 0$, and $\gamma (t) = \min \sigma (T_1 + t T_3)$. Then we have
 \[
  \chi_I (T_1 + T_2) T_3 \chi_I (T_1 + T_2) \geq \frac{\gamma (t) - {E_0}}{t} \chi_I (T_1 + T_2) .
 \]
\end{lemma}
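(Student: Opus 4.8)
The plan is to prove the statement about forms first---it is essentially a two-line computation---and then to deduce the operator version by specialising the three forms to the quadratic forms of $T_1$, $T_2$ and $T_3$ and testing against vectors in $\Ran \chi_I(T_1+T_2)$.

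\emph{Form version.} I would fix $x\in Y$. Since $x$ in particular lies in $\Dom(\mathfrak{h}_1)\cap\Dom(\mathfrak{h}_3)$, the definition of $\gamma(t)$ (understood as the greatest lower bound of the normalised form $\mathfrak{h}_1+t\mathfrak{h}_3$) yields the homogeneous inequality $\gamma(t)\langle x,x\rangle\leq \mathfrak{h}_1(x,x)+t\,\mathfrak{h}_3(x,x)$. On the other hand, non-negativity of $\mathfrak{h}_2$ together with $x\in Y$ gives $\mathfrak{h}_1(x,x)\leq \mathfrak{h}_1(x,x)+\mathfrak{h}_2(x,x)\leq E_0\langle x,x\rangle$. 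Substituting this into the previous inequality gives $\gamma(t)\langle x,x\rangle\leq E_0\langle x,x\rangle+t\,\mathfrak{h}_3(x,x)$; subtracting $E_0\langle x,x\rangle$ and dividing by $t>0$ produces $\mathfrak{h}_3(x,x)\geq \frac{\gamma(t)-E_0}{t}\langle x,x\rangle$, as claimed.

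\emph{Operator version.} I would take $\mathfrak{h}_1,\mathfrak{h}_2$ to be the quadratic forms of $T_1,T_2$ and set $\mathfrak{h}_3(x,x):=\langle x,T_3x\rangle$, which is defined on all of $X$, symmetric, and non-negative because $T_3$ is bounded and non-negative; thus $\mathfrak{h}_1,\mathfrak{h}_2,\mathfrak{h}_3$ satisfy the hypotheses of the first part. The key reduction is to check that every $\psi\in\Ran\chi_I(T_1+T_2)$ belongs to $Y$. Since $T_1+T_2$ is bounded below, say by $c$, and $I\subset(-\infty,E_0]$, we have $\chi_I(T_1+T_2)=\chi_{I\cap[c,E_0]}(T_1+T_2)$, so $\psi\in\Dom(T_1+T_2)=\Dom(T_1)\cap\Dom(T_2)\subset\Dom(\mathfrak{h}_1)\cap\Dom(\mathfrak{h}_2)$, and by the spectral theorem $\mathfrak{h}_1(\psi,\psi)+\mathfrak{h}_2(\psi,\psi)=\langle\psi,(T_1+T_2)\psi\rangle=\int_I\lambda\,\drm\langle E_\lambda\psi,\psi\rangle\leq E_0\lVert\psi\rVert^2$; hence $\psi\in Y$. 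Moreover $T_1+tT_3$ is self-adjoint on $\Dom(T_1)$ with associated form $\mathfrak{h}_1+t\mathfrak{h}_3$, so the form bottom appearing as $\gamma(t)$ above equals $\min\sigma(T_1+tT_3)$, matching the statement. Applying the form version to $x=\psi$ gives $\langle\psi,T_3\psi\rangle\geq\frac{\gamma(t)-E_0}{t}\lVert\psi\rVert^2$. Finally, for arbitrary $\phi\in X$ I put $\psi=\chi_I(T_1+T_2)\phi$ and use that $\chi_I(T_1+T_2)$ is an orthogonal projection to rewrite this as $\langle\phi,\chi_I(T_1+T_2)T_3\chi_I(T_1+T_2)\phi\rangle\geq\frac{\gamma(t)-E_0}{t}\langle\phi,\chi_I(T_1+T_2)\phi\rangle$, which is the asserted operator inequality.

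The only genuinely delicate point is the reduction step: one must ensure that vectors in $\Ran\chi_I(T_1+T_2)$ lie in the form domains of $T_1$ and of $T_2$ \emph{separately}---not merely in the form domain of the sum---and that $\mathfrak{h}_1+\mathfrak{h}_2$ evaluated there coincides with $\langle\cdot,(T_1+T_2)\cdot\rangle$; this is exactly where the hypotheses $T_2\geq0$, $T_1$ lower bounded, $T_1+T_2$ self-adjoint on $\Dom(T_1)\cap\Dom(T_2)$, and $I$ bounded above are used. Everything else is bookkeeping. One could also bypass forms entirely: for $\psi\in\Ran\chi_I(T_1+T_2)$ one has directly $\gamma(t)\lVert\psi\rVert^2\leq\langle\psi,(T_1+tT_3)\psi\rangle=\langle\psi,T_1\psi\rangle+t\langle\psi,T_3\psi\rangle\leq\langle\psi,(T_1+T_2)\psi\rangle+t\langle\psi,T_3\psi\rangle\leq E_0\lVert\psi\rVert^2+t\langle\psi,T_3\psi\rangle$, using $T_2\geq0$ in the penultimate step, and then divides by $t$.
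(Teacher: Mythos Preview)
Your proof is correct and follows essentially the same route as the paper: the form version is the identical two-line rearrangement (using $\mathfrak{h}_1+t\mathfrak{h}_3\geq\gamma(t)$ and $\mathfrak{h}_2\geq0$ together with the defining inequality of $Y$), and the operator version is deduced by the same specialisation, checking that $\Ran\chi_I(T_1+T_2)\subset\Dom(T_1)\cap\Dom(T_2)\subset Y$ and that the form infimum equals $\min\sigma(T_1+tT_3)$. Your write-up is slightly more explicit about why $\Ran\chi_I(T_1+T_2)\subset\Dom(T_1+T_2)$ (via boundedness of $I\cap\sigma(T_1+T_2)$) and you add the optional direct operator-level computation at the end, but these are elaborations rather than a different approach.
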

\begin{proof}
Since $\mathfrak{h}_1 + t \mathfrak{h}_3 \geq \gamma (t)$ and $\mathfrak{h}_2 (t) \geq 0$, we have for all $x\in Y$
\[
 t \mathfrak{h}_3 (x,x) \geq t \mathfrak{h}_3 (x,x) - {E_0} \langle x,x \rangle + \mathfrak{h}_1 (x,x) + \mathfrak{h}_2 (x,x) \geq (\gamma(t) -{E_0})\langle x,x \rangle .
\]
The first claim follows after dividing by $t>0$.
To conclude the second claim note that $\Dom (T_3) = X$,
$\Ran \chi_I(T_1 + T_2) \subset \Dom (T_1 + T_2) = \Dom (T_1) \cap \Dom (T_2)$ by definition,
and the domain of a self-adjoint operator $T_i$ is always a subset of the domain of the corresponding densely defined closed symmetric form $\mathfrak{h}_i$.
Thus $\Ran \chi_I(T_1 + T_2) \subset Y$. Finally,
$\min \sigma (T_1 + t T_3)$ equals the lower bound $\gamma(t)$ of the corresponding form.
\end{proof}
\begin{remark}
Note that the set $Y$ is not necessarily a linear space.
\par
Only $t>0$ such that ${E_0} < \gamma (t)$ give non-trivial bounds in the lemma.
If $J$ is any subset of $(0, \infty)$ the lemma implies
\[
\chi_I(T_1 + T_2) T_3 \chi_I(T_1 + T_2)
\geq
\kappa_J \chi_I(T_1 + T_2) \quad \text{ with }  \kappa_J:= \sup_{t\in J}\frac{\gamma (t) -{E_0}}{t}.
 \]
A natural choice would be $J=(0, \infty)$ giving
\[
\chi_I(T_1 + T_2) T_3 \chi_I(T_1 + T_2)
\geq
\sup_{t>0}\frac{\gamma(t)-{E_0}}{t}\chi_I(T_1 + T_2)
 \]
and another one $J:=J_{E_0}:=\{t>0: \gamma(t) >{E_0}\} $.
This formulation is chosen in \cite{BoutetdeMonvelLS-11} and \cite{Klein-13}.
We will instead directly insert an appropriate value of $t>0$ in our application.
\end{remark}
\subsection{Uniform uncertainty relations for spectral projectors of elliptic operators as in \S \ref{sec:Self-adjoint}}
Let us return to models as in \S \ref{sec:Self-adjoint}. We present two uncertainty relations which are valid with uniform constants for a whole family of operators. The first one is
\begin{theorem}[Uncertainty relation for low energy spectral projectors]
\label{thm:uncertaintyL}
Let $L\in \NN_\infty$, and $P$ be any non-negative operator in $L^2 (\Lambda_L)$
such that $H_L+P$ is still self-adjoint.
Then we have for all $t > 0$, ${E_0}\in \RR$, and measurable sets $I \subset (-\infty , {E_0}]$
  \[
  \chi_I (H_L + P) \, W \, \chi_I (H_L + P)
  \geq  \frac{\lambda_1(t)-{E_0}}{t}  \chi_I (H_L + P) .
  \]
In particular, let Assumption (Dir) be satisfied if $L < \infty$, $\delta \in (0 , \delta_0)$, $Z$ be a $(1,\delta)$-equidistributed sequence, $N = N (d,  \Elliptic,\Lipschitz) > 0$ be as in Theorem~\ref{thm:sampling}, and $I \subset (-\infty , \lambda_1 (0) + \kappa]$, where
\[
 \kappa = \frac{1}{4}\delta^{N (3 + \lvert \lambda_1 (0) \rvert^{2/3} + \lVert c_L \rVert^{2/3} + \lVert b_L \rVert^2)} .
\]
Then we have
 \[
  \chi_I (H_L + P)  W  \chi_I (H_L + P)
  \geq 2\kappa \chi_I (H_L + P) .
 \]
\end{theorem}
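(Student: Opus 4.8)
The first assertion is a direct application of the operator form of Lemma~\ref{lemma:klein2}: I would set $T_1 = H_L$, $T_2 = P$ (non-negative, and $H_L + P$ self-adjoint by hypothesis), and $T_3 = W = \mathbf{1}_{\equi_{\delta,Z}\cap\Lambda_L}$, which is bounded and non-negative with $\Dom(T_3) = L^2(\Lambda_L)$. Then $\gamma(t) = \min\sigma(H_L + tW) = \lambda_1(t)$ in the notation of \S\ref{sec:applications}, and the conclusion $\chi_I(H_L+P)\,W\,\chi_I(H_L+P)\geq \frac{\lambda_1(t)-E_0}{t}\,\chi_I(H_L+P)$ for $I\subset(-\infty,E_0]$ is exactly what the lemma produces. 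I would remark that the bound is non-trivial precisely when $\lambda_1(t) > E_0$.

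For the ``in particular'' part I need to choose a good value of $t$ and lower-bound $\lambda_1(t)$. The plan is to apply Lemma~\ref{lemma:lifting}(b) with $k=1$, $s=0$, and the parameter $E$ chosen so that the hypothesis $\lambda_1(t)\leq\lambda_1(0)+E$ is met; then Lemma~\ref{lemma:lifting}(b) gives
\[
 \lambda_1(t) \geq \lambda_1(0) + t\,\frac{3}{4}\,\delta^{N(1 + \lvert\lambda_1(0)\rvert^{2/3} + E^{2/3} + t^{2/3} + \lVert c_L\rVert_\infty^{2/3} + \lVert b_L\rVert_\infty^2)} .
\]
Since $0\le W\le 1$, one has the crude a priori bound $\lambda_1(t)\le\lambda_1(0)+t$, so taking $E = t$ is self-consistent; I would then fix $t$ to be a convenient small constant — $t=1$ is natural — which turns the exponent into $N(3 + \lvert\lambda_1(0)\rvert^{2/3} + \lVert c_L\rVert_\infty^{2/3} + \lVert b_L\rVert_\infty^2)$ (using $1^{2/3}=1$ three times) and yields $\lambda_1(1)-\lambda_1(0)\geq \tfrac34\delta^{N(3+\cdots)} = 3\kappa$ with $\kappa$ as defined in the statement. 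Hence $\frac{\lambda_1(1)-E_0}{1} = \lambda_1(1)-E_0 \geq \lambda_1(0)+3\kappa - E_0 \geq 3\kappa - \kappa = 2\kappa$ whenever $E_0 \le \lambda_1(0)+\kappa$, which is exactly the hypothesis $I\subset(-\infty,\lambda_1(0)+\kappa]$. Plugging $t=1$ into the first assertion then gives $\chi_I(H_L+P)\,W\,\chi_I(H_L+P)\geq 2\kappa\,\chi_I(H_L+P)$.

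The main thing to be careful about is the bookkeeping in the exponent of $\delta$: matching the $E^{2/3}+t^{2/3}$ terms from Lemma~\ref{lemma:lifting}(b) with the ``$3$'' in the displayed definition of $\kappa$, and verifying the self-consistency of the choice $E=t=1$ against the constraint ``$\lambda_1(t)\le\lambda_1(0)+E$'' in Lemma~\ref{lemma:lifting}(b) (here the trivial bound $\lambda_1(t)\le\lambda_1(0)+t$ from $W\le 1$ does the job, with room to spare since $\kappa<1$). A minor subtlety is that Lemma~\ref{lemma:lifting}(b) is stated for the operators $H_L(t) = H_L + tW$ without the extra perturbation $P$; but the first assertion of the present theorem already absorbed $P$ into $T_2$, and $\gamma(t)=\min\sigma(H_L+tW)$ does not involve $P$, so the lifting estimate is applied to the $P$-free operator and there is no circularity. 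No genuine analytic obstacle arises; everything reduces to the two lemmas already proved.
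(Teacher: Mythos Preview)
Your proof is correct and follows essentially the same route as the paper: the first assertion is Lemma~\ref{lemma:klein2} with $T_1=H_L$, $T_2=P$, $T_3=W$, and for the second part the paper also takes $t=1$, $E=1$, $E_0=\lambda_1(0)+\kappa$ and invokes Lemma~\ref{lemma:lifting}(b) to get $\lambda_1(1)-\lambda_1(0)\geq 3\kappa$, hence $(\lambda_1(1)-E_0)/1\geq 2\kappa$. Your explicit verification of the hypothesis $\lambda_1(t)\leq\lambda_1(0)+E$ via $0\leq W\leq 1$ and your remark that $P$ does not enter the lifting estimate are useful elaborations that the paper leaves implicit.
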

\begin{proof}
The first statement of Theorem~\ref{thm:uncertaintyL} is verbatim Lemma~\ref{lemma:klein2} with $T_1 = H_L$, $T_2 = P$, and $T_3 = W$.
For the second part we choose $t = 1$, $E= 1$,
${E_0} = \lambda_1 (0) + \kappa$,
and insert the lower bound on $\lambda_1(t)$ implied by Lemma~\ref{lemma:lifting} part (b). This way we obtain
\[
 \frac{\lambda_1(1)-{E_0}}{1} \geq \lambda_1 (1) - \lambda_1 (0) - \kappa \geq 2\kappa  ,
\]
and the second statement of Theorem~\ref{thm:uncertaintyL} follows.
\end{proof}
Next we formulate an uncertainty relation for low energy spectral projectors of the operator $H_L$ as in \S \ref{sec:Self-adjoint}, i.e.\ in the situation of Theorem~\ref{thm:uncertaintyL} with $P = 0$.
The gain compared to Theorem~\ref{thm:uncertaintyL} is that we eliminate Assumption (Dir), and that the constant in the lower bound is independent of the Lipschitz constant $\Lipschitz$.
We recall that the coefficient $A_L$ satisfies \eqref{eq:elliptic} respectively \eqref{eq:elliptic2}. Hence,
\begin{equation*}
\EllipticLower  := \inf_{x \in\Lambda_L}\sigma(A_L(x))
\quad\text{and}\quad
\EllipticU  := \sup_{x \in\Lambda_L}\sigma(A_L(x))
\end{equation*}
are both finite and positive.
\begin{theorem} \label{thm:uncertainty_Lipschitz}
 Let $L \in \NN_\infty$, $\delta \in (0 , \delta_0)$, and $Z$ be a $(1,\delta)$-equidistributed sequence.
Then,
\[
\forall x \in \{x \in \HNullEins (\Lambda_L) \colon a_L (x,x) \leq (\tilde \lambda_1 (0) + \kappa) \|x\|^2\} \colon \quad
 \langle x , W x \rangle \geq \kappa \|x\|^2,
\]
in particular
 \begin{equation*} 
 \chi_{I} (H_L) W \chi_{I} (H_L) \geq \kappa \chi_{I} (H_L).
\end{equation*}
Here
\begin{equation} \label{eq:without-lipschitz}
I = (-\infty, \tilde\lambda_1 (0) + \kappa]
,\qquad
  \kappa =
  \frac{1}{2}  \delta^{M \bigl(1 + \lvert \tilde \lambda_1 (0) / \EllipticLower \rvert^{2/3} + \EllipticLower^{-2/3} + \lVert c_L / \EllipticLower \rVert_\infty^{2/3} + \lVert b_L / \EllipticLower \rVert_\infty^2 \bigr)},
 \end{equation}
$\tilde\lambda_1(0)$ is the spectral minimum of the auxiliary quadratic form defined in
\eqref{eq:auxiliary_form}, and $M$ is a constant which depends only on the dimension.
\end{theorem}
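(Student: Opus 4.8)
The plan is to dominate the quadratic form $a_L$ from below by an auxiliary form whose leading coefficient is the \emph{constant scalar} matrix $\EllipticLower\,\mathrm{Id}$, to rescale the associated operator so that its leading coefficient becomes the identity, and then to feed this rescaled operator into Lemma~\ref{lemma:klein2} together with the lifting estimate of Lemma~\ref{lemma:lifting}(b). The point is that the rescaled operator has Lipschitz constant $0$ and ellipticity constant $1$, so Assumption~(Dir) holds for it automatically and all constants it inherits from Theorem~\ref{thm:sampling} depend only on $d$ --- this is exactly what removes both (Dir) and the $\Lipschitz$-dependence. Concretely I would introduce the auxiliary form
\begin{equation}\label{eq:auxiliary_form}
 \tilde a_L(u,v) := \int_{\Lambda_L}\Bigl(\EllipticLower\,\nabla u\cdot\overline{\nabla v}+\sum_{i=1}^d b_L^i\,\partial_i u\,\overline v+c_L\,u\,\overline v\Bigr)\,\drm x
\end{equation}
on the form domain of $a_L$, let $\tilde H_L$ be the associated self-adjoint operator (self-adjointness is inherited from \eqref{eq:sa}, which is unaffected) and $\tilde\lambda_1(0)=\min\sigma(\tilde H_L)$. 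Writing $\nabla x=u+\mathrm{i}v$ with real vector fields $u,v$ and using $a_L^{ij}=a_L^{ji}$, the imaginary part of $\sum_{i,j}a_L^{ij}\,\partial_i x\,\overline{\partial_j x}$ cancels and one gets $\sum_{i,j}a_L^{ij}\,\partial_i x\,\overline{\partial_j x}=u^\T A_L u+v^\T A_L v\ge\EllipticLower\lvert\nabla x\rvert^2$ pointwise; since the lower-order contributions are identical, this gives $a_L(x,x)\ge\tilde a_L(x,x)$ for all $x$ in the form domain, hence $\{x: a_L(x,x)\le c\lVert x\rVert^2\}\subset\{x:\tilde a_L(x,x)\le c\lVert x\rVert^2\}$ for every $c\in\RR$.

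Next I would pass to the rescaled operator $\hat H_L:=\EllipticLower^{-1}\tilde H_L=-\Delta+\EllipticLower^{-1}b_L^\T\nabla+\EllipticLower^{-1}c_L$, with form $\hat a_L:=\EllipticLower^{-1}\tilde a_L$. This is again an operator of the type considered in \S\ref{sec:Self-adjoint}, now with leading coefficient $\mathrm{Id}$, hence with $\Elliptic=1$ and $\Lipschitz=0$ (so (Dir), and (Dir'), hold automatically for $\hat H_L$), with lower-order coefficients of sup-norms $\lVert b_L/\EllipticLower\rVert_\infty$ and $\lVert c_L/\EllipticLower\rVert_\infty$, and with $\hat\lambda_1(0):=\min\sigma(\hat H_L)=\tilde\lambda_1(0)/\EllipticLower$. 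Since $\Elliptic\ge1$ and $\Lipschitz\ge0$ we have $\delta<\delta_0\le(330\,d\,\euler^2\,2^{5/2})^{-1}$, which is exactly the admissible range in Lemma~\ref{lemma:lifting} applied to $\hat H_L$; the constant there becomes $N(d,1,0)$, depending only on $d$, and I would put $M:=2\max\{N(d,1,0),1\}$. Applying Lemma~\ref{lemma:lifting}(b) to $\hat H_L$ with $k=1$, $s=0$ and $t=E=\EllipticLower^{-1}$ (admissible because $0\le W\le1$ forces $\min\sigma(\hat H_L+tW)\le\hat\lambda_1(0)+t$) then yields
\[
 \min\sigma(\hat H_L+\EllipticLower^{-1}W)\ \ge\ \hat\lambda_1(0)+\tfrac34\,\EllipticLower^{-1}\,\delta^{\,N(d,1,0)\bigl(1+\lvert\hat\lambda_1(0)\rvert^{2/3}+2\EllipticLower^{-2/3}+\lVert c_L/\EllipticLower\rVert_\infty^{2/3}+\lVert b_L/\EllipticLower\rVert_\infty^{2}\bigr)} .
\]

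Then I would invoke the first (form-level) part of Lemma~\ref{lemma:klein2} in $L^2(\Lambda_L)$ with $\mathfrak{h}_1=\hat a_L$, $\mathfrak{h}_2=0$, $\mathfrak{h}_3=\langle\cdot,W\cdot\rangle$, $t=\EllipticLower^{-1}$ and $E_0=(\tilde\lambda_1(0)+\kappa)/\EllipticLower=\hat\lambda_1(0)+\kappa/\EllipticLower$, with $\kappa$ as in \eqref{eq:without-lipschitz}. Here $\gamma(\EllipticLower^{-1})=\min\sigma(\hat H_L+\EllipticLower^{-1}W)$, so dividing $\gamma(\EllipticLower^{-1})-E_0$ by $t=\EllipticLower^{-1}$ and inserting the bound above gives, for every $x$ with $\hat a_L(x,x)\le E_0\lVert x\rVert^2$, the estimate $\langle x,Wx\rangle\ge\bigl(\tfrac34\,\delta^{\,N(d,1,0)(1+\lvert\hat\lambda_1(0)\rvert^{2/3}+2\EllipticLower^{-2/3}+\lVert c_L/\EllipticLower\rVert_\infty^{2/3}+\lVert b_L/\EllipticLower\rVert_\infty^{2})}-\kappa\bigr)\lVert x\rVert^2$. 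By the comparison of the first paragraph, every $x$ with $a_L(x,x)\le(\tilde\lambda_1(0)+\kappa)\lVert x\rVert^2$ satisfies $\hat a_L(x,x)\le E_0\lVert x\rVert^2$, and a short comparison of exponents --- using $\delta<1/2$, $\hat\lambda_1(0)=\tilde\lambda_1(0)/\EllipticLower$ and $M=2\max\{N(d,1,0),1\}$ --- shows that the $\kappa$ in \eqref{eq:without-lipschitz} obeys $\kappa\le\tfrac38\,\delta^{N(d,1,0)(1+\lvert\hat\lambda_1(0)\rvert^{2/3}+2\EllipticLower^{-2/3}+\lVert c_L/\EllipticLower\rVert_\infty^{2/3}+\lVert b_L/\EllipticLower\rVert_\infty^{2})}$, whence the last estimate is $\ge\kappa\lVert x\rVert^2$. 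This is the asserted form inequality; the operator inequality follows because $\Ran\chi_I(H_L)\subset\{x:a_L(x,x)\le(\tilde\lambda_1(0)+\kappa)\lVert x\rVert^2\}$ for $I=(-\infty,\tilde\lambda_1(0)+\kappa]$.

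I expect the main obstacle to be the coordinated choice of the auxiliary operator and of the coupling parameter $t=\EllipticLower^{-1}$ in \emph{both} Lemma~\ref{lemma:lifting}(b) and Lemma~\ref{lemma:klein2}: it is this choice that forces the term $\EllipticLower^{-2/3}$ into the exponent of $\kappa$ and keeps the subtracted error $\kappa/\EllipticLower$ comparable with $\kappa$ after dividing by $t$, while the rescaling to leading coefficient $\mathrm{Id}$ is what makes $M$ dimension-only. Everything else --- that $\hat H_L$ is genuinely of the admissible type and satisfies (Dir) vacuously, that $\delta$ lies in the admissible range, and the final exponent comparison --- is routine bookkeeping.
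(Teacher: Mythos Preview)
Your proposal is correct and follows essentially the same strategy as the paper: introduce the auxiliary constant-coefficient form $\tilde a_L$, rescale to $\hat a_L=\EllipticLower^{-1}\tilde a_L$ so that Theorem~\ref{thm:sampling}/Lemma~\ref{lemma:lifting}(b) apply with dimension-only constants, and feed the resulting ground-state lifting into Lemma~\ref{lemma:klein2}. The only cosmetic difference is packaging: the paper sets $\mathfrak h_1=\tilde a_L$, $\mathfrak h_2=p:=a_L-\tilde a_L\ge 0$ and $t=1$ in Lemma~\ref{lemma:klein2}, whereas you set $\mathfrak h_1=\hat a_L$, $\mathfrak h_2=0$, $t=\EllipticLower^{-1}$ and carry the comparison $a_L\ge\tilde a_L$ separately; after unwinding the scaling these are the same computation.
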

\begin{proof}[Proof of Theorem~\ref{thm:uncertainty_Lipschitz}]
We note that $\HNullEins (\Lambda_\infty) = \HEins (\Lambda_\infty)$ and define the two forms $\tilde a_L : \HNullEins (\Lambda_L) \times \HNullEins (\Lambda_L) \to \CC$ and $p : \HNullEins (\Lambda_L) \times \HNullEins (\Lambda_L) \to \CC$
by
\begin{equation}\label{eq:auxiliary_form}
  \tilde a_L (u,v) =  \int_{\Lambda_L} \left( \EllipticLower \sum_{i=1}^d \partial_i u \overline{\partial_i v} + \sum_{i=1}^d b_L^i \partial_i u \overline{v} + c_L u \overline{v} \right) \drm x
\end{equation}
and
\[
 p (u,v) = a_L (u,v) - \tilde a_L (u,v) = \int_{\Lambda_L} \left( \sum_{i,j=1}^d a_L^{ij} \partial_i u \overline{\partial_j v} - \EllipticLower \sum_{i=1}^d \partial_i u \overline{\partial_i v} \right).
\]
The forms $\tilde a_L$ and $p$ are densely defined, closed, symmetric sesquilinear forms in $L^2 (\Lambda_L)$. Note that the form $p$ satisfies
\[
 p (u,u) \geq \EllipticLower \int_{\Lambda_L} \lvert \nabla u \rvert^2 \drm x - \EllipticLower \int_{\Lambda_L} \lvert \nabla u \rvert^2 \drm x = 0.
\]
hence is non-negative. Moreover, by definition we have $a_L = \tilde a_L + p$.
Since the form $\tilde a_L$ has constant second order coefficients, their Lipschitz constant is zero and Assumption (Dir) is certainly satisfied.
For $t \geq 0$ let $\tilde\lambda_1 (t) =
 \inf_{u \in \HNullEins (\Lambda_L)}
 \left(\tilde a_L (u,u) + t \langle u , W u \rangle \right) $.
If $\EllipticLower = 1$, then Lemma~\ref{lemma:lifting} part (b) (with $E=t$) implies for all $t \geq 0$
\[
 \tilde\lambda_1 (t) \geq \tilde\lambda_1 (0) + t \delta^{M \bigl(1 + \lvert \tilde\lambda_1 (0) \rvert + t^{2/3} + \lVert c_L \rVert_\infty^{2/3} + \lVert b_L \rVert_\infty^2 \bigr)}
\]
with a constant $M > 0$ which depends only on the dimension.
Now we consider general $\EllipticLower > 0$ and reduce it to the previous situation. Note that the second order coefficients of the form $\hat a_L = \EllipticLower^{-1} \tilde a_L$ have ellipticity constant one, the lower order coefficients are $b_L / \EllipticLower$ and $c_L / \EllipticLower$.
%
Since $\tilde \lambda_1 (0) = \EllipticLower \hat \lambda_1 (0)
:=\EllipticLower \inf_{u \in \HNullEins (\Lambda_L)} \hat a_L  (u,u)$
we obtain
\begin{align*}
 \tilde \lambda_1 (t)
 &=
 \EllipticLower \inf_{u \in \HNullEins (\Lambda_L)} \left( \EllipticLower^{-1} \tilde a_L (u,u) + (t / \EllipticLower) \langle u , Wu \rangle \right) \\
 &\geq
 \EllipticLower \left( \hat \lambda_1 (0) +  (t/\EllipticLower) \delta^{M \bigl(1 + \lvert \hat\lambda_1 (0) \rvert^{2/3} + (t/\EllipticLower)^{2/3} + \lVert c_L / \EllipticLower \rVert_\infty^{2/3} + \lVert b_L / \EllipticLower \rVert_\infty^2 \bigr)}  \right)  \\
 &= \tilde \lambda_1 (0) + t \delta^{M \bigl(1 + \lvert \tilde \lambda_1 (0) / \EllipticLower \rvert^{2/3} + (t/\EllipticLower)^{2/3} + \lVert c_L / \EllipticLower \rVert_\infty^{2/3} + \lVert b_L / \EllipticLower \rVert_\infty^2 \bigr)} .
\end{align*}
An application of Lemma~\ref{lemma:klein2} with $\mathfrak{h}_1 = \tilde a_L$, $\mathfrak{h}_2 = p$, $\mathfrak{h}_3 = \langle \cdot , W \cdot \rangle$, $E_0 = \tilde \lambda_1 (0) + \kappa$ with $\kappa$ as in the theorem, and $t = 1$ gives, using $\tilde a_L + p = a_L$,
\[
 \forall x \in \{x \in \HNullEins (\Lambda_L) \colon a_L (x,x) \leq (\tilde \lambda_1 (0) + \kappa) \langle x,x \rangle\} \colon \quad
 \langle x , W x \rangle \geq \kappa \langle x , x \rangle .
\]
This implies the statement of the theorem.
\end{proof}
\begin{remark}
The price to pay for eliminating in \eqref{eq:without-lipschitz} the dependence on the Lipschitz constant $\Lipschitz$ and the upper ellipticity constant $\EllipticU$ is the appearance of the implicit quantity
$\tilde\lambda_1(0)$ in the definition of $I$ and $\kappa$. However, for many interesting cases
one has $\tilde\lambda_1(0)=\lambda_1(0)$, e.g.\ if $b_L\equiv 0 \equiv c_L$.
If this is not the case there are the following rough bounds on
$\tilde \lambda_1 (0)$, which eliminate the auxiliary quantity from $\kappa$.
For the lower bound we use \eqref{eq:sa}, the product rule for the divergence $\diver (u \tilde b_L) = \nabla u^\T \tilde b_L + u \diver (\tilde b_L)$, and integration by parts to obtain
\begin{align*}
 \tilde a_L (u,u)
 &=
 \EllipticLower  \int_{\Lambda_L} \left( \lvert \nabla u \rvert^2 + \frac{\mathrm{i} \tilde b_L^\T \nabla u \overline u }{\EllipticLower} + \frac{\tilde c_L + \mathrm{i} \diver (\tilde b_L / 2)}{\EllipticLower} u \overline u \right) \drm x \\
 &=
 \EllipticLower  \int_{\Lambda_L} \left( \lvert \nabla u \rvert^2 + \frac{\mathrm{i} \tilde b_L^\T \nabla u \overline u}{2\EllipticLower}  + \frac{\tilde c_L \lvert u \rvert^2}{\EllipticLower} - \frac{\mathrm{i} u \tilde b_L^\T \overline{\nabla u}}{2\EllipticLower}  \right) \drm x  \\
 &=
 \EllipticLower  \int_{\Lambda_L} \left(
 \biggl( \nabla u - \frac{\mathrm{i} u \tilde b_L}{2 \EllipticLower} \biggr)^\T \overline{\biggl( \nabla u - \frac{\mathrm{i} u \tilde b_L}{2 \EllipticLower} \biggr)} +
 \frac{\tilde c_L \lvert u \rvert^2}{\EllipticLower}  - \frac{\lvert \tilde b_L \rvert^2 \lvert u \rvert^2}{4 \EllipticLower^2}  \right) \drm x  .
\end{align*}
Thus we find for all $u \in \HNullEins (\Lambda_L)$ the lower bound
\[
\tilde a_L (u,u)
\geq
\inf_{x \in \Lambda_L} \left(\tilde c_L (x)  - \frac{\lvert \tilde b_L (x) \rvert^2 }{4 \EllipticLower} \right)  \Vert u \Vert^2
\geq (-\lVert \tilde c_L \rVert_\infty - \lVert \tilde b_L \rVert^2 / (4 \EllipticLower)) \Vert u \Vert^2
\]
which could be improved by some magnetic Hardy inequality.
Since $\tilde a_L (u,u) \leq a_L (u,u)$ for all $u \in \HNullEins (\Lambda_L)$ we obtain in particular the two-sided estimate
\[
-\lVert \tilde c_L \rVert_\infty - \frac{\lVert \tilde b_L \rVert^2}{4 \EllipticLower} \leq \tilde \lambda_1 (0) \leq \lambda_1 (0) .
\]
\end{remark}
\begin{remark}
In the recent preprint \cite{StollmannS-18-arxiv} the authors prove an uncertainty relation for low energy spectral projectors for self-adjoint divergence type  operators as in Section~\ref{sec:Self-adjoint} (with $b = c = 0$).
The results in \cite{StollmannS-18-arxiv} do not require the coefficient functions to be continuous and
are in this respect more general than our Theorems~\ref{thm:uncertaintyL} and \ref{thm:uncertainty_Lipschitz}.
Let us briefly compare this with our results. Since Theorem~\ref{thm:uncertainty_Lipschitz} exhibits a constant $\kappa$ which is independent on the Lipschitz constant, it would be possible to deduce an uncertainty relation at small energies for non-continuous second order coefficients (by taking suitable limits), see e.g.\ \cite{DickeV}.
Additionally, we have also Theorem~\ref{thm:unserKlein} which does not require the considered energy interval to be close to the spectral minimum.
\end{remark}
From Theorem~\ref{thm:uncertainty_Lipschitz} we immediately obtain the following corollary which applies to the full space operator $H = H_\infty$.
\begin{corollary}
 Let $\delta \in (0 , \delta_0)$, $Z$ be a $(1,\delta)$-equidistributed sequence, $W = \mathbf{1}_{S_\delta , Z}$, $b = c =0$, $F : \RR^d \to \RR^d$ be Lipschitz continuous, $\EllipticLower , \EllipticU , \Lipschitz > 0$, $\tilde A : \RR^d \to \RR^{d \times d}$ satisfying for all $x, \in \RR^d$ and all $\xi \in \RR^d$
 \begin{equation*}
\EllipticLower \lvert \xi \rvert^2 \leq \xi^\T \tilde A (x) \xi \leq \EllipticU \lvert \xi \rvert^2
\quad\text{and}\quad
\lVert \tilde A (x) - \tilde A(y) \rVert_\infty \leq \Lipschitz \lvert x-y \rvert  ,
\end{equation*}
$A = \tilde A \circ F$, $I = (-\infty , \delta^{M (1 + \EllipticLower^{-2/3})} / 2 ]$, and $M = M (d) > 0$ be as in Theorem~\ref{thm:uncertainty_Lipschitz}.
Then we have
 \begin{equation} \label{eq:UCPhomogenization}
 \chi_{I} (H) W \chi_{I} (H) \geq \frac{1}{2} \delta^{M (1 + \EllipticLower^{-2/3})}  \chi_{I} (H) .
\end{equation}
\end{corollary}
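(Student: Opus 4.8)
The plan is to deduce this corollary from Theorem~\ref{thm:uncertainty_Lipschitz} applied to the operator $H = H_\infty$ with coefficient matrix $A = \tilde A \circ F$ and $b = c = 0$. First I would verify that $A = \tilde A \circ F$ satisfies the standing hypotheses \eqref{eq:elliptic}: ellipticity is immediate since $\tilde A(F(x))$ inherits the bounds $\EllipticLower |\xi|^2 \leq \xi^\T \tilde A(F(x)) \xi \leq \EllipticU |\xi|^2$ for every $x$, so $\vartheta_{\mathrm E}$ may be taken as $\max\{\EllipticU, \EllipticLower^{-1}\}$; and the Lipschitz bound follows from the chain of estimates $\lVert A(x) - A(y)\rVert_\infty = \lVert \tilde A(F(x)) - \tilde A(F(y))\rVert_\infty \leq \Lipschitz |F(x) - F(y)| \leq \Lipschitz \,\mathrm{Lip}(F)\, |x-y|$, so $A$ is Lipschitz with constant $\Lipschitz\,\mathrm{Lip}(F)$. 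Crucially, the lower and upper ellipticity constants $\EllipticLower = \inf_x \sigma(A(x))$ and $\EllipticU = \sup_x \sigma(A(x))$ are exactly those of $\tilde A$ — the composition with $F$ changes neither, since $F$ is surjective onto (a set on which $\tilde A$ attains) the same spectral range, or at worst only shrinks it, and in either case the bounds in the corollary's hypothesis hold verbatim.

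Next I would invoke Theorem~\ref{thm:uncertainty_Lipschitz} with $b_L = c_L = 0$, $L = \infty$. Then $\tilde a_L$ reduces to $\EllipticLower \int |\nabla u|^2$, which is non-negative, so its spectral minimum is $\tilde\lambda_1(0) = 0$. Consequently all the terms in the exponent of $\kappa$ in \eqref{eq:without-lipschitz} involving $\tilde\lambda_1(0)$, $c_L$, and $b_L$ vanish, leaving
\[
 \kappa = \frac{1}{2}\, \delta^{M(1 + \EllipticLower^{-2/3})},
\]
and the interval becomes $I = (-\infty, \kappa] = (-\infty, \delta^{M(1+\EllipticLower^{-2/3})}/2]$, which is precisely the interval stated in the corollary. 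Theorem~\ref{thm:uncertainty_Lipschitz} then yields directly
\[
 \chi_I(H)\, W\, \chi_I(H) \geq \kappa\, \chi_I(H) = \frac{1}{2}\,\delta^{M(1+\EllipticLower^{-2/3})}\, \chi_I(H),
\]
which is \eqref{eq:UCPhomogenization}.

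The only mild subtlety — and the place I would be most careful — is the claim that $F$ enters the final constants \emph{only} through $\EllipticLower$, i.e.\ that the Lipschitz constant of $A$ has genuinely been eliminated. This is exactly the point of Theorem~\ref{thm:uncertainty_Lipschitz}: its conclusion \eqref{eq:without-lipschitz} is independent of $\Lipschitz$ and of $\EllipticU$ and depends only on the dimension through $M$ and on $\EllipticLower$ (together with the lower-order coefficients, here zero). So even though $A = \tilde A \circ F$ may have a very large Lipschitz constant $\Lipschitz\,\mathrm{Lip}(F)$ — which would blow up the bound coming from Theorem~\ref{thm:sampling} or Theorem~\ref{thm:uncertaintyL} — the bound produced by Theorem~\ref{thm:uncertainty_Lipschitz} is insensitive to it. I would make sure to state explicitly that $\EllipticLower$ for $A$ coincides with the $\EllipticLower$ in the hypothesis (using that $\sigma(\tilde A(F(x))) \subset [\EllipticLower, \EllipticU]$ for all $x$ and that the infimum over the composed map is still $\geq \EllipticLower$, with equality if $F$ is onto or if the infimum over $\tilde A$ is attained in the range of $F$), and that $b = c = 0$ forces $\tilde\lambda_1(0) = 0$; the rest is a direct substitution into Theorem~\ref{thm:uncertainty_Lipschitz}. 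No genuine obstacle remains beyond this bookkeeping.
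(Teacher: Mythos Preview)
Your proposal is correct and matches the paper's approach: the paper states the corollary follows ``immediately'' from Theorem~\ref{thm:uncertainty_Lipschitz}, and you have simply spelled out the substitution $b=c=0$, $L=\infty$, $\tilde\lambda_1(0)=0$ that collapses the exponent in \eqref{eq:without-lipschitz} to $M(1+\EllipticLower^{-2/3})$. Your care about whether the infimum $\inf_x\sigma(A(x))$ equals or merely dominates the hypothesis parameter $\EllipticLower$ is well placed but harmless: the proof of Theorem~\ref{thm:uncertainty_Lipschitz} only uses $\EllipticLower$ as a lower bound, so any value not exceeding the true infimum yields the stated conclusion.
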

The corollary and the following example are formulated with homogenization theory for partial differential operators in mind. It might be of interest to know that certain uncertainty relations remain stable throughout the homogenization limit and can be transferred to the homogenized operator (provided it exists).  In this context one may be interested in coefficient functions of the following type:
\begin{example}
Let $\tilde A(x)$ be a diagonal matrix with all entries on the diagonal equal to $2+ \cos (x)$. Then the lower ellipticity constant is $\EllipticLower=1$ and the upper one is $\EllipticU=\EllipticLower+2$.
Let $s_1, \dots , s_d>0$,
and $F: \RR^d \to  \RR^d$, $F(x)=(s_1x_1, \ldots, s_dx_d)$. Then $A (x) = (\tilde A\circ F)(x)$ is diagonal with
entries   $2+ \cos (s_1x_1), \ldots , 2+ \cos (s_d x_d)$.
The Lipschitz constant of $H_F$ diverges for $ s_i\to\infty$, but our bound \eqref{eq:UCPhomogenization} is not effected by that.
\end{example}
\subsection{Wegner estimate for elliptic second order operator plus random potential}
\label{ss:Wegner}
Let us introduce a class of random operators which are a sum of a deterministic part and a random potential.
The deterministic part is a self-adjoint partial differential operator of the type considered in \S \ref{sec:Self-adjoint}.
The random part consists of a random potential from a rather general class introduced in \cite{NakicTTV-18},
which includes alloy-type and random breather potentials as special cases, see the discussion below.
\par
Let $\mathcal{D} \subset \RR^d$ be a Delone set, i.e.\ there are $0< G_1<G_2$ such that for any $x \in \RR^d$
we have $\sharp\{ \mathcal{D} \cap (\Lambda_{G_1} + x) \} \leq 1$ and $\sharp \{ \mathcal{D} \cap (\Lambda_{G_2} + x) \} \geq 1$.
Here, $\sharp\{ \cdot \}$ stands for the cardinality. For $0 \leq \omega_{-} < \omega_{+} < 1$ we define the probability space $(\Omega , \mathcal{A} , \PP)$ with
\[
 \Omega = \bigtimes_{j \in \mathcal{D}} \RR , \quad
 \mathcal{A} = \bigotimes_{j \in \mathcal{D}} \mathcal{B} (\RR)
 \quad \text{and} \quad
 \PP = \bigotimes_{j \in \mathcal{D}} \mu ,
\]
where $\mathcal{B}(\RR)$ is the Borel $\sigma$-algebra and $\mu$ is a probability measure with $\mathrm{supp}\ \mu \subset [\omega_{-}, \omega_{+}]$ and a bounded density $\nu_{\mu}$.
\par
Furthermore, let $\{ u_t : t \in [0,1] \} \subset L^\infty(\RR^d)$ be functions such that there are $G_u \in \NN$, $u_{\max} \geq 0$, $\au, \bu> 0$ and $\ao, \bo \geq 0$ with
\begin{align}
 \forall  t \in [0,1] & \colon \supp u_t \subset \Lambda_{G_u}, \nonumber \\
 \forall  t \in [0,1] &\colon \lVert u_t \rVert_\infty \leq u_{\max} , \label{eq:condition_u} \\
 \forall  t \in [\omega_{-}, \omega_{+}],\ \delta \leq 1 - \omega_{+}:\
 \exists x_0 \in \Lambda_{G_u} &\colon u_{t + \delta} - u_t \geq \au  \delta^{\ao} \chi_{\ball{\bu  \delta^{\bo}}{x_0}} . \nonumber
\end{align}
For each $L \in \NN$ we define the family of Schr\"odinger operators $H_{\omega,L} : \Dom (H_L) \to L^2 (\Lambda_L)$, $\omega \in \Omega$, by
\[
 H_{\omega,L} := H_L + V_\omega \quad \text{where} \quad V_\omega(x) = \sum_{j \in \mathcal{D}} u_{\omega_j} (x-j) .
\]
Note that for all $\omega \in [0,1]^{\mathcal{D}}$ we have $\lVert V_\omega \rVert_\infty \leq K_u := u_{\max}  \lceil G_u/G_1 \rceil^d$.
Assumption~\eqref{eq:condition_u} includes many prominent models of linear and non-linear random Schr\"o\-ding\-er operators. We refer the reader to \cite{NakicTTV-18} for a more detailed discussion. Here, we give 'only' two prominent examples.
\begin{description}[\setleftmargin{0pt}\setlabelstyle{\bfseries}]
  \item[Standard random breather model:]Let $\mu$ be the uniform distribution on $[0, 1/4]$ and let $u_t(x) = \chi_{\ball{t}{0}}$, $j \in \ZZ^d$. Then $V_\omega = \sum_{j \in \ZZ^d} \chi_{\ball{\omega_j}{j}}$ is the characteristic function of a disjoint union of balls with random radii. For this model we have $G_u = u_{\mathrm{max}} = \alpha_1 = \beta_2 = 1$, $\alpha_2 = 0$, and $\beta_1 = 1/2$.
  \item[Alloy-type model]
  Let $0 \leq u \in L_0^\infty(\RR^d)$, $u \geq \alpha > 0$ on some open set and let $u_t(x) := t u(x)$. Then $V_\omega(x) = \sum_{j \in \ZZ^d} \omega_j u(x-j)$ is a sum of copies of $u$ at all lattice sites $j \in \ZZ^d$, multiplied with $\omega_j$. For this model we have $\alpha_1 = \alpha_2 = 1$, and $\beta_2 = 0$.
 \end{description}
\begin{theorem}[Wegner estimate]\label{thm:wegner}
For all $E_0 \in \RR$ there are positive constants
\begin{itemize}
 \item $C = C (d , \lVert b_L \rVert_\infty, \lVert \diver b_L \rVert_\infty , \lVert c_L \rVert_\infty , E_0, K_u , \Elliptic)$,
 \item $\kappa = \kappa(d, \omega_+ , \au , \ao , \bu , \bo , G_2 , G_u , K_u , E_0,\lVert b_L \rVert_\infty,\lVert c_L \rVert_\infty,\Elliptic,\Lipschitz)$, and
 \item $\epsilon_{\mathrm{max}} = \epsilon_{\mathrm{max}} (d, \omega_+ , \au , \ao , \bu , \bo , G_2 , G_u , K_u , E_0,\lVert b_L \rVert_\infty,\lVert c_L \rVert_\infty,\Elliptic,\Lipschitz)$,
\end{itemize}
such that for all $L \in (G_2 +G_u) \NN$ such that assumption (Dir) is satisfied, all $E \in \RR$ and
$\epsilon \leq \epsilon_{\max}$ with $[E - \epsilon, E + \epsilon] \subset (- \infty,E_0]$ we have
 \begin{equation*}
  \EE \left[ \mathrm{Tr} \left[ \chi_{[E - \epsilon, E + \epsilon]} (H_{\omega, L}) \right] \right] \leq C \lVert \nu_\mu \rVert_\infty
   (4\epsilon)^{1/\kappa}
   L^{2d}.
 \end{equation*}
\end{theorem}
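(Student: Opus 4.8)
### Proof proposal for the Wegner estimate (Theorem~\ref{thm:wegner})

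\textbf{Overall strategy.} The plan is to follow the by-now standard route from a scale-free uncertainty relation to a Wegner estimate, as pioneered for the Laplacian in \cite{NakicTTV-18} and \cite{Klein-13}, but using our uncertainty relation Theorem~\ref{thm:unserKlein} (equivalently the lifting Lemma~\ref{lemma:lifting}) adapted to the elliptic second order operator $H_L$. The key structural fact from the random model is the lower bound in \eqref{eq:condition_u}: increasing the coupling $\omega_j$ from $t$ to $t+\delta$ makes the single-site bump larger than $\au\delta^{\ao}\chi_{\ball{\bu\delta^{\bo}}{x_0}}$, i.e.\ at least a constant multiple of the indicator of a small ball. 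Summing over the Delone points $j\in\cD\cap\Lambda_L$ this produces a perturbation bounded below by $\au\delta^{\ao}$ times the indicator of a $(G,\bu\delta^{\bo})$-equidistributed union of balls (after covering $\Lambda_L$ by translates of $\Lambda_{G_u}$ and using the Delone property $\sharp(\cD\cap(\Lambda_{G_1}+x))\le 1\le\sharp(\cD\cap(\Lambda_{G_2}+x))$ to control how the bumps overlap). This is exactly the geometric setting $W=\mathbf 1_{\equi_{\delta',Z}\cap\Lambda_L}$ of Section~\ref{sec:Self-adjoint}, with $\delta'=\bu\delta^{\bo}$ and an appropriate scale $G$ dividing $L$.

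\textbf{Key steps.} First I would fix $E_0\in\RR$ and choose, via Theorem~\ref{thm:unserKlein} applied to $H_L$ (with $\lVert V_L\rVert_\infty$ replaced by the a priori bound $\lvert E_0\rvert + K_u$ on the relevant spectral parameter and on $V_\omega$), a length $\delta'_0\in(0,\delta_0)$ and a quantitative constant so that for every $\omega$ and every spectral parameter $\le E_0$ one has a uniform lower bound $\chi_I(H_{\omega,L})\,\mathbf 1_{\equi_{\delta',Z}}\,\chi_I(H_{\omega,L})\ge \tfrac34\tilde\kappa\,\chi_I(H_{\omega,L})$ for intervals $I$ of length $2\sqrt{\tilde\kappa}$, where $\tilde\kappa=(\delta')^{N(1+\lvert E_0\rvert^{2/3}+K_u^{2/3}+\lVert c_L\rVert_\infty^{2/3}+\lVert b_L\rVert_\infty^2)}$; crucially $\tilde\kappa$ is independent of $L$ and of $\omega$. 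Second, I would translate this spatial positivity into positivity of the derivative of $V_\omega$ with respect to a coupling parameter: using \eqref{eq:condition_u}, moving all $\omega_j$, $j\in\cD\cap\Lambda_L$, by a common amount $\eta$ dominates (a constant times) $\mathbf 1_{\equi_{\bu\eta^{\bo},Z}}$ from below, so that the spectral shift identity / Birman--Schwinger bound gives, for the spectral projector onto a short interval, $\chi_I(H_{\omega,L})\,\partial V_\omega\,\chi_I(H_{\omega,L})\ge c\,\chi_I(H_{\omega,L})$ with an $L$-independent $c=c(E_0,\dots)$, provided the interval length $\epsilon$ is below a threshold $\epsilon_{\max}$ comparable to $\tilde\kappa$. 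Third, I would run the standard averaging argument over the random couplings: writing $\mathrm{Tr}\,\chi_{[E-\epsilon,E+\epsilon]}(H_{\omega,L})$ and using monotonicity of eigenvalues in the couplings together with the just-derived positivity, one bounds the expectation by $\lVert\nu_\mu\rVert_\infty$ times a power of $\epsilon$ times the number of Delone points in $\Lambda_L$, which is $\le(L/G_1)^d$; a second factor $L^d$ (or a crude trace bound on $\chi_{(-\infty,E_0]}(H_{\omega,L})$ via a Weyl-type estimate for elliptic operators, using ellipticity $\Elliptic$ and $\lVert b_L\rVert_\infty,\lVert c_L\rVert_\infty$) yields the $L^{2d}$ volume factor. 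Collecting the exponents, the probability of the bad event scales like $(4\epsilon)^{1/\kappa}$ with $\kappa$ an explicit function of the stated parameters, and absorbing all multiplicative constants into $C$ gives the claim.

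\textbf{Main obstacle.} The technical heart — and the place where the variable second order coefficients genuinely enter — is establishing the Birman--Schwinger / spectral-shift step with a constant that does \emph{not} deteriorate as $L\to\infty$: one must control $\mathrm{Tr}[\chi_I(H_{\omega,L})\,\mathbf 1_{\supp V_\omega\cap\Lambda_L}]$ and the resolvent smoothing near the energy window uniformly in $L$, using only the ellipticity bound $\Elliptic$ and the $L^\infty$-bounds on the lower order coefficients, not on the (possibly large) Lipschitz constant $\Lipschitz$. For this I would invoke the scale-free uncertainty relation Theorem~\ref{thm:unserKlein} (which already encapsulates the delicate unique-continuation input) to replace the energy window by the equidistributed set, and then use a deterministic bound on the number of eigenvalues of $H_{\omega,L}$ below $E_0+1$, obtained from comparison with a constant-coefficient operator $-\EllipticLower\Delta$ plus a bounded potential — this comparison is where care is needed, but it does not require Lipschitz control. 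The remaining steps are the standard spectral averaging machinery of \cite{NakicTTV-18}, which applies verbatim once the uniform positivity estimate is in hand. Consequently the proof reduces, modulo the bookkeeping of constants, to combining Theorem~\ref{thm:unserKlein}, the single-site lower bound \eqref{eq:condition_u}, and the spectral averaging lemma of \cite{NakicTTV-18}.
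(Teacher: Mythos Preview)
Your overall strategy is correct and the ingredients you name---the single-site lower bound \eqref{eq:condition_u}, the scale-free uncertainty/lifting input, the spectral averaging machinery of \cite{NakicTTV-18}, and a Weyl bound---are exactly those the paper uses. However, two points deserve correction.

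First, the paper takes a more direct route than you sketch. Rather than going through the uncertainty relation for short intervals (Theorem~\ref{thm:unserKlein}) and then converting this into ``positivity of the derivative of $V_\omega$'', the paper applies the eigenvalue lifting Lemma~\ref{lemma:lifting}(a) directly: increasing all couplings by $\delta$ dominates an $\au\delta^{\ao}$-multiple of the indicator of a $(G_u+G_2,\bu\delta^{\bo})$-equidistributed set, so by (the scaled) Lemma~\ref{lemma:lifting} each eigenvalue below $E_0$ moves up by at least $\delta^\kappa$ for an explicit $\kappa$. One then chooses $\delta=(4\epsilon)^{1/\kappa}$ so that $\lambda_i(H_{\omega+\delta,L})\ge\lambda_i(H_{\omega,L})+4\epsilon$, and from here the argument is \emph{literally} the proof of \cite[Theorem~2.8]{NakicTTV-18}: the spectral averaging yields the factor $\lVert\nu_\mu\rVert_\infty\,\delta$, the sum over lattice sites gives one $L^d$, and the crude bound $-\Theta_n(\omega_-)\le\mathrm{Tr}\,\chi_{(-\infty,E+3\epsilon]}(H_{\tilde\omega,L})\le C\,L^d$ via a Weyl estimate (comparing with $-\EllipticLower\Delta$ plus bounded potential) gives the second $L^d$. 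No Birman--Schwinger or resolvent-smoothing step is needed.

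Second, the ``main obstacle'' you identify is not an obstacle at all: reread the theorem statement---the exponent $\kappa$ is explicitly allowed to depend on $\Lipschitz$. The $L$-independence of the constants is already built into Lemma~\ref{lemma:lifting} (which inherits it from the sampling/equidistribution theorems), so there is nothing further to prove about scale-freeness. Your concern about obtaining bounds ``using only the ellipticity bound $\Elliptic$ \ldots\ not on the Lipschitz constant $\Lipschitz$'' is simply not required here, and pursuing it would make the argument unnecessarily hard.
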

\begin{remark}[Energy band and volume dependence in the Wegner estimate]
The Wegner estimate given in Theorem~\ref{thm:wegner} exhibits a H\"older continuity with respect to $2\epsilon$, the
length of the energy interval. For certain random potentials with linear dependence on the randomness,
e.g.\ alloy-type models with non-negative single site potentials, based on what is known for
Schr\"odinger operators one could expect that actually Lipschitz continuity holds.
For the  general model which we treat allowing a non-linear dependence on the random variables,
H\"older continuity is the best possible result.
\par
The Wegner estimate given in Theorem~\ref{thm:wegner} holds at all energies, but has a quadratic volume dependence.
The expected optimal dependence is linear in the volume of the cube. This can be improved in several ways.
Both of them have been worked out in the case of Schr\"odinger operators with electromagnetic potentials.
The extension to elliptic operators with variable second order coefficients as considered in this paper
would require a fair amount of technical work.
\begin{enumerate}[(a)]
 \item
The method of \cite{HundertmarkKNSV-06} allows one to replace the term $\epsilon^{1/\kappa} L^{2d}$ by
$\epsilon^{1/\kappa} |\ln \epsilon|^d L^d\leq \epsilon^{1/\tilde \kappa} L^d$.
This bound has the correct volume behavior, but a slightly worsened H\"older continuity.
However, in our situation, where we do not have an optimal estimate for the exponent $1/\kappa$, this is not relevant.
For Schr\"odinger operators this has been worked out in \cite{NakicTTV-18}.
\par
To extend this result to  general elliptic second order differential operators as treated in this paper
one would need to apply a generalized Feynman-Kac-Ito formula to obtain analogous spectral shift estimates as in
\cite{HundertmarkKNSV-06}.
\item
Alternatively, for energies near the bottom of the spectrum,
one can employ the idea of  \cite{BoutetdeMonvelLS-11} to establish an uncertainty principle for spectral projectors using a lifting estimate
for the ground state energy, cf.~Theorem \ref{thm:uncertaintyL} above.
Once this is available one can follow the strategy of \cite{CombesHK-07} in order to obtain
a Wegner estimate  where the term $\epsilon^{1/\kappa} L^{2d}$ above is replaced by $\epsilon^{1/\kappa} L^d$.
\par
For Schr\"odinger operators with random potential of generalized alloy or Delone-alloy-type
the last mentioned improvement has been implemented in a series of papers in increasing generality.
In the case where $H_L$ is a Schr\"odinger operator this has been implemented in \cite[\S 4.5]{RojasMolinaV-13}.
A variant suitable for high disorder alloy-type Schr\"odinger operators
 was established in \cite{Klein-13}, and in \cite{TaeuferT-18}
this  improvement was established for magnetic Schr\"odinger operators.
\par
To extend these results to the setting of the present paper
one would need to generalize the trace class Combes-Thomas estimates of \cite{CombesHK-07}
to general elliptic second order differential operators.
\item
Finally, the uncertainty principle for spectral projectors
as formulated in Theorem \ref{thm:uncertaintyL}  holds in the case of Schr\"odinger operators
not only for low energies, but actually for arbitrary energy intervals of the type $(-\infty,E]$.
This was proven in  \cite{NakicTTV-18}. It seems that this statement carries over to
general elliptic second order differential operators as treated in this paper.
We are pursuing this topic in a follow up project.
\par
This again can be used to obtain better Wegner estimates for the models considered in
this application section.
\end{enumerate}
\end{remark}
\begin{proof}
Fix $E_0 \in \RR$.
Note that $\lambda_i(H_{\omega,L}) \leq E_0$ implies that $\lambda_i(H_{\omega + \delta,L}) \leq E_0 + \lVert V_{\omega + \delta} - V_\omega \rVert_\infty \leq E_0 + 2 K_u$.
Now we apply a scaled version of Lemma~\ref{lemma:lifting}
and obtain for all $L \in (G_u+G_2) \NN$, all $\omega \in [\omega_{-}, \omega_{+}]^{\mathcal{D}}$, all $\delta \leq \min \{1 - \omega_+, ( 330  d \euler^2 \Elliptic^{11/2}(\Elliptic+1)^{5/2}((G_u + G_2)\Lipschitz+1) )^{-1} \} =: \delta_{\mathrm{max}}$ and all $i \in \NN$ with $\lambda_i(H_{\omega, L}) \leq E_0$ the inequality
\begin{equation*}
   \lambda_i(H_{\omega + \delta, L})
   \geq
   \lambda_i(H_{\omega, L}) + \au \delta^{\ao}
   \left(\frac{\delta}{G_u + G_2}\right)^{N(1+(E_0 + 2K_u)^{2/3} + \lVert b_L \rVert_\infty^{2} + \lVert c_L \rVert_\infty^{2/3})} .
\end{equation*}
In particular, there is $\kappa = \kappa(d, \omega_+ , \au , \ao , \bu , \bo , G_2 , G_u , K_u , E_0,\lVert b_L \rVert_\infty,\lVert c_L \rVert_\infty,\Elliptic,\Lipschitz) > 0$, such that for all $\delta \leq \delta_{\mathrm{max}}$
\begin{equation*} 
 \lambda_i(H_{\omega + \delta, L})
   \geq
   \lambda_i(H_{\omega, L}) + \delta^{\kappa}.
\end{equation*}
Let $\epsilon_{\mathrm{max}} = \delta_{\mathrm{max}}^{\kappa} / 4$, $0 < \epsilon \leq \epsilon_{\mathrm{max}}$, and choose $\delta \in (0,\delta_{\mathrm{max}}]$ such that
$4\epsilon=\delta^{\kappa}$, i.e.\ $\delta = (4\epsilon)^{1/\kappa}$.
With this notation we have
\begin{equation*}
 \lambda_i(H_{\omega + \delta,L}) \geq \lambda_i(H_{\omega,L}) + 4 \epsilon .
\end{equation*}
Now we follow literally the proof of Theorem~2.8 in \cite{NakicTTV-18} and obtain
\begin{equation*}
\EE \left[ \mathrm{Tr} \left[ \chi_{[E - \epsilon, E + \epsilon]} (H_{\omega, L}) \right] \right] \leq
\lVert \nu_\mu \rVert_{\infty}  \delta\sum_{n=1}^{\lvert \tilde\Lambda_L \rvert}
 \left[ \Theta_n(\omega_{+}+  \delta) - \Theta_n(\omega_{-}) \right],
\end{equation*}
where $\tilde \Lambda_L := \{ j \in \mathcal{D} : \exists t \in [0,1]: \supp u_t( \cdot - j) \cap \Lambda_L \neq \emptyset \}$ is the set of lattice sites which can influence the potential within $\Lambda_L$,
\begin{align*}
\Theta_n(t) &:= \mathrm{Tr} \left[ \rho \left( H_{{\tilde{\omega}^{(n, \delta)}(t)},L} - E - 2 \epsilon \right) \right] ,
\end{align*}
$\rho \in C^\infty(\mathbb{R})$, $-1 \leq \rho \leq 0$, is smooth, non-decreasing such that $\rho = -1$ on $(-\infty; -\epsilon]$, $\rho = 0$ on $[\epsilon; \infty)$, and $\lVert \rho' \rVert_\infty \leq 1/\epsilon$, and where
for given $\omega \in [\omega_{-}, \omega_{+}]^{\mathcal{D}}$,
$n \in \{ 1, \ldots, \lvert \tilde \Lambda_L \rvert \}$,
$\delta \in [0, \delta_{\max}]$ and
$t \in [\omega_{-},  \omega_{+}]$,
we define $\tilde{\omega}^{(n, \delta)}(t) \in [\omega_{-}, 1]^{\mathcal{D}}$ inductively via
\begin{align*}
\left( \tilde{\omega}^{(1, \delta)}(t) \right)_j &:=
        \begin{cases}
                 t & \mbox{ if } j = k(1),\\
                \omega_j & \mbox{else},
        \end{cases} \quad\text{and}\quad
\left( \tilde{\omega}^{(n, \delta)}(t) \right)_j :=
        \begin{cases}
                 t & \mbox{ if } j = k(n),\\
                \left( \tilde{\omega}^{(n-1, \delta)}(\omega_j + \delta) \right)_j & \mbox{else}.
        \end{cases}
\end{align*}
Here, $k : \{ 1, \ldots,  \sharp\tilde \Lambda_L \} \rightarrow \mathcal{D}$, $n \mapsto k(n)$, denotes an enumeration of the points in $\tilde \Lambda_L$.
Since $\rho \leq 0$ we have
\begin{align*}
  \Theta_n(\omega_{+}+  \delta) - \Theta_n(\omega_{-})
  &\leq   - \Theta_n(\omega_{-}) =- \mathrm{Tr} \left[ \rho \left( H_{{\tilde{\omega}^{(n, \delta)}(\omega_{-})},L} - E - 2 \epsilon \right) \right] .
\end{align*}
Since $-\rho \leq \chi_{(-\infty , \epsilon]}$ and by
a Weyl bound as in \cite[Lemma~5]{HundertmarkKNSV-06} we obtain
\begin{multline*}
\Theta_n(\omega_{+}+  \delta) - \Theta_n(\omega_{-})
    \leq
   \mathrm{Tr} \left[\chi_{(-\infty,E+3\epsilon] }\left( H_{{\tilde{\omega}^{(n, \delta)}(\omega_{-})},L}  \right) \right]
   \\
    \leq
   \lvert \Lambda_L \rvert \left(\frac{e}{2 \pi d \EllipticLower}\right)^{2/d}
   (E+3\epsilon +K_u + \lVert c\rVert_\infty + \lVert \diver  b\rVert_\infty
   + (\lVert b\rVert_\infty^2/4\EllipticLower) )^{d/2} .
\end{multline*}
The result follows since the number of terms in the $n$-sum is bounded by $\lvert \tilde\Lambda_L \rvert \leq (2L / G_1)^d$.
\end{proof}
\subsection{Outlook and further research goals}
To illustrate further the motivation for our results in Section \ref{sec:results},
we discuss possible extensions and resulting applications.
First we turn to the topic of
\paragraph{Control theory for the heat equation}
Let $L\in \NN_\infty$,  $\delta \in (0,1/2)$, $Z$ a $(1,\delta)$-equidistributed sequence, $W$ and $H_L$ be as in
Section~\ref{sec:Self-adjoint}.
Given $T>0$, we consider the inhomogeneous Cauchy problem
\begin{equation}
\label{eq:parabolic_control_system}
 \begin{cases}
  \partial_t u(t) + H_L u(t) &= W f(t),\ 0<t<T,\\
  u(0) &= u_0 \in L^2(\Lambda_L),
 \end{cases}
\end{equation}
where $u, f \in L^2([0,T], L^2(\Lambda_L))$.
The function $f$ is called~\emph{control function} and the  operator $W$ is
called~\emph{control operator}. In our case it is the multiplication operator with the characteristic function of the \emph{observability set} $\equi_{\delta,Z} \cap \Lambda_L$.
The~\emph{mild solution} to~\eqref{eq:parabolic_control_system} is given by the Duhamel formula
\begin{equation*}
 u(t)
 =
 \euler^{-t H_L} u_0 + \int_0^t \euler^{-(t-s) H_L} W f(s) \drm s.
\end{equation*}
One of the central questions in control theory is whether, given an input state $u_0$ and a time $T > 0$, it is
possible to find a control function $f$, such that $u(T) = 0$. If this is the case the system~\eqref{eq:parabolic_control_system} is called \emph{null-controllable in time $T$}.
\par
Such a result is implied by a sufficiently strong uncertainty relation, see for instance
\cite{TenenbaumT-11, RousseauL-12,BeauchardPS-18, EgidiNSTTV} and the references therein.
Specifically, we would need to have at our disposal an analog of Theorem \ref{thm:uncertainty_Lipschitz}
which holds for any semi-bounded energy interval of the form $(-\infty, E]$, $E \in \RR$.
While this is one of our future research goals, let us state a partial result which can be formulated with
the results established in this paper and which could serve as a first step in the proof of null-controllability of the system \eqref{eq:parabolic_control_system}.
It concerns an auxiliary control problem, which we formulate next.
\par
Fix some $E\in \RR$ and consider the system
\begin{equation} \label{eq:truncated_control_system}
 \begin{cases}
  \partial_t u(t) + H_L u(t) &= \chi_{(-\infty , E]} (H_L)W f(t),\ 0<t<T,\\
  u(0) &= \chi_{(-\infty , E]} (H_L)u_0, \quad u_0 \in  L^2(\Lambda_L),
 \end{cases}
 \end{equation}
Again we say that the system \eqref{eq:truncated_control_system} is null-controllable in time $T > 0$ if for all $u_0 \in L^2 (\Lambda_L)$
there exists a function $f \in L^2 ([0,T] , L^2 (\Lambda_L))$
such that the solution of \eqref{eq:truncated_control_system} satisfies $u (T) = 0$.
Moreover, we define the costs as
\[
\mathcal{C} (T, y_0) = \inf \{ \lVert f \rVert_{L^2 ([0,T] , L^2 (\Lambda_L))} \colon \text{the solution of \eqref{eq:truncated_control_system} satisfies} \ u (T) = 0 \}.
\]
\begin{lemma}
Let $L\in \NN_\infty$, Assumption (Dir) be satisfied if $L < \infty$, $\delta \in (0 , \delta_0)$, $Z$ be a $(1,\delta)$-equidistributed sequence, $N = N (d,  \Elliptic,\Lipschitz) > 0$ be as in Theorem~\ref{thm:sampling}, and $T>0$.
Assume further that $\lambda_1 (0) =0$ and let
 \[
  0 < E \leq \kappa :=
  \frac{1}{4} \delta^{N(3 + \lVert c_L \rVert_\infty^{2/3} + \lVert b_L \rVert_\infty^{2})} .
 \]
Then the system~\eqref{eq:truncated_control_system} is null-controllable at time $T > 0$ with costs
\[
 \mathcal{C} \leq \sqrt{\frac{2}{T}}  \delta^{-(N/2)(3 + \lVert c_L \rVert_\infty^{2/3} + \lVert b_L \rVert_\infty^{2})} \lVert u_0 \rVert_{\Lambda_L} .
\]
\end{lemma}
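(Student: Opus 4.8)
The strategy is the classical Lebeau–Robbiano / duality approach to null-controllability, reduced to a spectral inequality that is already at our disposal. First I would observe that the system \eqref{eq:truncated_control_system} lives entirely inside the finite- or infinite-dimensional reducing subspace $\Ran \chi_{(-\infty,E]}(H_L)$, since both the initial datum and the control term are projected by $\chi_{(-\infty,E]}(H_L)$ and this projector commutes with the semigroup $\euler^{-tH_L}$. On that subspace the generator $H_L$ is bounded above by $E$, so the semigroup is, up to the factor $\euler^{-\lambda_1(0)t}=1$ (using the hypothesis $\lambda_1(0)=0$), well controlled.

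Second, I would set up the dual observability problem: by the standard Hilbert Uniqueness Method, null-controllability of \eqref{eq:truncated_control_system} with cost $\mathcal{C}\le C_{\mathrm{obs}}\lVert u_0\rVert$ is equivalent to the final-state observability estimate
\[
 \lVert \euler^{-TH_L}\chi_{(-\infty,E]}(H_L)\varphi\rVert_{\Lambda_L}^2
 \le C_{\mathrm{obs}}^2 \int_0^T \lVert W\euler^{-tH_L}\chi_{(-\infty,E]}(H_L)\varphi\rVert_{\Lambda_L}^2\,\drm t
\]
for all $\varphi\in L^2(\Lambda_L)$. Because the problem is confined to $\Ran\chi_{(-\infty,E]}(H_L)$, there is no high-frequency part to kill, and the usual iterated Lebeau–Robbiano telescoping collapses: it suffices to prove the observability estimate directly. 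Writing $\psi:=\chi_{(-\infty,E]}(H_L)\varphi$ and $\psi(t):=\euler^{-tH_L}\psi\in\Ran\chi_{(-\infty,E]}(H_L)$, I would invoke Theorem~\ref{thm:uncertaintyL} (second part, with $P=0$, $E_0=\lambda_1(0)+\kappa=\kappa$, $I=(-\infty,\kappa]\supset(-\infty,E]$) to get the spectral inequality $\lVert W\psi(t)\rVert^2=\langle\psi(t),W\psi(t)\rangle\ge 2\kappa\,\lVert\psi(t)\rVert^2$ for every $t$; here I use $0<E\le\kappa$ so that $\Ran\chi_{(-\infty,E]}(H_L)\subset\Ran\chi_I(H_L)$.

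Third, I would combine this with dissipativity. Since $H_L\ge\lambda_1(0)=0$ on $\Ran\chi_{(-\infty,E]}(H_L)$, the map $t\mapsto\lVert\psi(t)\rVert^2$ is non-increasing, hence $\lVert\psi(t)\rVert^2\ge\lVert\psi(T)\rVert^2=\lVert\euler^{-TH_L}\psi\rVert^2$ for all $t\in[0,T]$. Therefore
\[
 \int_0^T \lVert W\psi(t)\rVert^2\,\drm t
 \ge 2\kappa\int_0^T\lVert\psi(t)\rVert^2\,\drm t
 \ge 2\kappa\,T\,\lVert\euler^{-TH_L}\psi\rVert^2,
\]
which is exactly the observability estimate with $C_{\mathrm{obs}}^2=1/(2\kappa T)$. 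Unwinding $\kappa=\tfrac14\delta^{N(3+\lVert c_L\rVert_\infty^{2/3}+\lVert b_L\rVert_\infty^2)}$ gives $C_{\mathrm{obs}}=\sqrt{2/T}\,\delta^{-(N/2)(3+\lVert c_L\rVert_\infty^{2/3}+\lVert b_L\rVert_\infty^2)}$, the claimed cost bound. The main obstacle — though a mild one here — is the clean HUM reduction: one must be slightly careful that the control problem genuinely stays inside $\Ran\chi_{(-\infty,E]}(H_L)$ and that the duality between cost and observability constant is applied on that subspace with the correct adjoint identities; once that bookkeeping is done, the estimate is immediate because the hard analytic input (the uniform spectral inequality) is already provided by Theorem~\ref{thm:uncertaintyL}.
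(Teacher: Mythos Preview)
Your proposal is correct and follows essentially the same route as the paper: reduce null-controllability to final-state observability via duality/HUM, then combine the spectral inequality from Theorem~\ref{thm:uncertaintyL} (with $P=0$, $\lambda_1(0)=0$, so $I=(-\infty,\kappa]\supset(-\infty,E]$) with contractivity of the semigroup to obtain the observability constant $1/(2\kappa T)$. The only cosmetic difference is the order in which you apply the two ingredients—you use the spectral inequality pointwise in $t$ and then integrate against the monotonicity $\lVert\psi(t)\rVert\ge\lVert\psi(T)\rVert$, whereas the paper first bounds $T\lVert y(T)\rVert^2\le\int_0^T\lVert y(s)\rVert^2\,\drm s$ and then applies the spectral inequality under the integral; the outcome and the constant are identical.
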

\begin{proof}
 Controllability of \eqref{eq:truncated_control_system} at time $T$ is equivalent to final state observability of the system
 \begin{equation*}
\begin{cases}
 \partial_t y(t) + H_L y(t)  = 0,\ 0<t<T,  \\
y(0) =y_0 \in \Ran \chi_{(-\infty , E]} (H_L).
 \end{cases}
 \end{equation*}
 This is a classical fact and can be inferred e.g.\ from  \cite{Coron-07}, \cite{TucsnakW-09}, \cite{RousseauL-12} or \cite{EgidiNSTTV}.
By the contractivity of the semigroup and the spectral theorem we have
\[
 T \lVert y(T) \rVert_{\Lambda_L}^2
 \leq \int_0^T \lVert \euler^{-s H_L} y(0) \rVert_{\Lambda_L}^2 \mathrm{d}s
 =    \int_0^T \lVert \chi_{(-\infty , E]} (H_L) \euler^{-s H_L}  y(0) \rVert_{\Lambda_L}^2 \mathrm{d}s .
\]
By Theorem~\ref{thm:uncertaintyL} we  obtain the  estimate
 \begin{align*}
  T \lVert y(T) \rVert_{\Lambda_L}^2
  &  \leq  \frac{1}{2\kappa}
  \int_0^T \lVert \chi_{(-\infty , E]} (H_L) \euler^{-s H_L} y(0) \rVert_{\equi_{\delta , Z} (L)}^2 \mathrm{d}s
   =     \frac{1}{2\kappa}
   \int_0^T \lVert \euler^{-s H_L} y(0) \rVert_{\equi_{\delta , Z} (L)}^2 \mathrm{d}s ,
 \end{align*}
which is the desired finite state observability.
It implies,  see e.g.\ \cite{RousseauL-12} or \cite{EgidiNSTTV}, that the control cost is estimated
by square root of the the observability constant $1/(2\kappa T)$.
\end{proof}
\paragraph{Wegner estimates for random divergence type operators with small support}
In Section \ref{ss:Wegner}
we have discussed Wegner estimates for elliptic second order operators with random potential. We envisage to apply our results to a related but more challenging goal, namely a Wegner estimate for \emph{random operators in divergence form}. These are elliptic second order operators  $- \operatorname{div} (A_\omega \nabla) $, where the second order term itself is random with a suitable matrix-valued random field $A_\omega$. They model propagation of classical waves in random media.
\par
Let us be a bit more specific about possible choices for the field $A_\omega$.
In \cite{FigotinK-97c}, operators of the form $- \operatorname{div} (\rho_\omega^{-1} \nabla)$
are studied.
There,
\[
\rho_\omega(x) = \rho_0(x) \biggl(1 + \epsilon \sum_{j \in \ZZ^d} \omega_j u(x-j) \biggr) ,
\]
where $\rho_0$ is a uniformly positive and bounded, periodic function,
$u$ a measurable, bounded, compactly supported and non-negative function, $\epsilon > 0$ a small disorder parameter, and the $\omega_j$, $j \in \ZZ^d$, are uniformly bounded independent and identically distributed random variables.
Note that the coefficient matrix is isotropic in the sense that it is a multiple of the identity.
This condition was dispensed with in~\cite{Stollmann-98}, which allowed the modeling of random anisotropic media.
There, the random perturbation consists of a sum of random rotations of random  positive diagonal matrices in every periodicity cell.
\par
We hope that our results enable us to study the case where the support of the function $u$ is small
and where small deviations of the position of the translates $u(\cdot -j)$ is allowed.
This will, however, be dealt with in a separate project.
%
%
%
%
%
%
%
%
\section{Three annuli inequality}
\label{sec:three-annuli}
In this section we deduce a three annuli inequality from the quantitative Carleman estimate of \cite{NakicRT}.
For our purpose the particular Carleman estimate from \cite{NakicRT} is crucial,
since it provides explicit upper and lower bounds of the weight function in terms of $\Elliptic$ and $\Lipschitz$.
A non-quantitative version of the Carleman estimate with the same weight function,
proven in \cite{EscauriazaV-03}, is not sufficient for our purpose.
\par
For $0< r_1 < R_1 \leq r_2 < R_2 \leq r_3 < R_3 < \infty$ we use for $i \in \{1,2,3\}$ the notation
\[
 Z_i := \ballc{R_i} \setminus \overline{\ballc{r_i}} \subset \RR^{d} .
\]
For $x \in \RR^d$ we denote by $Z_i (x) = Z_i + x$ the translated annuli.
\begin{theorem}[Three annuli inequality]\label{thm:three_annuli}
 Let $0 < r_1 < R_1 \leq r_2 < R_2 \leq r_3 < R_3$ and $\epsilon > 0$.
Then for all measurable and bounded $V \colon \RR^d \to \RR$ there are constants $\alpha^* \geq 1$ and $D_i > 0$, $i \in \{1,2,3\}$,
depending merely on $r_j$, $R_j$, $j \in \{1,2,3\}$, $\epsilon$, $d$, $\Elliptic$, $\Lipschitz$, $\lVert V \rVert_\infty$, $\lVert b \rVert_\infty$, and $\lVert c \rVert_\infty$,
such that for all
$\psi \in \Dom (H)$ and  $\zeta \in L^2 (\RR^d)$ satisfying $\lvert H \psi \rvert \leq \lvert V\psi \rvert + \lvert \zeta \rvert$
almost everywhere on $\ballc{R_3}$, and all $\alpha \geq \alpha^*$ we have
\[
 \alpha^3 \lVert \psi \rVert_{Z_2}^2
 \leq
 D_1 \left(\frac{R_2 \mu_1 \Elliptic}{r_1}\right)^{2\alpha} \lVert \psi \rVert_{Z_1}^2
 +
 D_2 \left(\frac{R_2 \mu_1 \Elliptic}{r_3}\right)^{2\alpha} \lVert \psi \rVert_{Z_3}^2
 +
 D_3 \left(\frac{R_2 \mu_1 \Elliptic}{r_1}\right)^{2\alpha} \lVert \zeta \rVert_{\ballc{R_3}}^2 ,
\]
where
\begin{equation}\label{eq:mu1}
 \mu_1 = \mu_1(R_3,\epsilon) =
        \begin{cases}
          \exp(\mu \sqrt{\Elliptic})    & \text{if $\mu \sqrt{\Elliptic} \leq 1$,}\\
          \euler \mu \sqrt{\Elliptic}   & \text{if $\mu \sqrt{\Elliptic} > 1$},
         \end{cases}
\end{equation}
with  
$\mu = 33 d R_3 \Elliptic^{11/2} \Lipschitz + \epsilon$.

\end{theorem}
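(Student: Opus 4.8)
The plan is to derive the three annuli inequality from the quantitative Carleman estimate of \cite{NakicRT} by the standard cutoff-and-absorb argument, adapted so that the explicit $\Elliptic$- and $\Lipschitz$-dependence of the weight function is carried through. First I would recall the Carleman estimate from \cite{NakicRT}: it provides a weight function $w$ (of the form $w(x) = \phi(|x|)$ for a suitable $\phi$, with the feature that the logarithmic derivative and the explicit upper/lower bounds of $w$ on spheres are controlled by the quantity $\mu = 33 d R_3 \Elliptic^{11/2} \Lipschitz + \epsilon$, which is exactly why $\mu_1$ enters the statement), together with an estimate of the shape
\[
\alpha^3 \int e^{2\alpha w} |\psi|^2 \, \drm x
\le
C \int e^{2\alpha w} |H\psi|^2 \, \drm x
\]
valid for $\psi$ supported in the relevant annular region and all $\alpha \ge \alpha^*$. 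Since $C_{\mathrm c}^\infty(\RR^d)$ is an operator core for $H$ and $\HZwei(\RR^d) \subset \Dom(H)$, I would first establish the estimate for smooth $\psi$ and then pass to general $\psi \in \Dom(H)$ by the approximation \eqref{eq:core}.

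Next I would introduce a smooth cutoff $\eta$ that is identically $1$ on the middle annulus $Z_2$ and supported in $Z_1 \cup Z_2 \cup Z_3$ (more precisely in a slightly enlarged middle region whose complement meets only the outer parts of $Z_1$ and $Z_3$), and apply the Carleman estimate to $\eta\psi$. On the left one bounds below by $\alpha^3 \int_{Z_2} e^{2\alpha w}|\psi|^2$. On the right, $H(\eta\psi) = \eta H\psi + [\text{commutator terms}]$, where the commutator involves $\nabla\eta$ and $\Delta\eta$ paired against $\nabla\psi$ and $\psi$; these are supported in the two transition regions, which lie inside $Z_1$ and $Z_3$ respectively. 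Using the hypothesis $|H\psi| \le |V\psi| + |\zeta|$ together with Caccioppoli-type interior estimates (to control $\lVert \nabla\psi \rVert$ on a slightly smaller annulus by $\lVert\psi\rVert$ on a slightly larger one — these are available for $H$ and cost only constants depending on $\Elliptic$, $\Lipschitz$, $\lVert b\rVert_\infty$, $\lVert c\rVert_\infty$), one bounds the right-hand side by
\[
C \Bigl( \int_{Z_1} e^{2\alpha w}|\psi|^2 + \int_{Z_3} e^{2\alpha w}|\psi|^2 + \lVert V\rVert_\infty^2 \int_{Z_2} e^{2\alpha w}|\psi|^2 + \int_{\ballc{R_3}} e^{2\alpha w}|\zeta|^2 \Bigr).
\]
The term with $\lVert V\rVert_\infty^2$ on $Z_2$ (and similarly any zeroth-order contribution from the commutator on $Z_2$) is absorbed into the left-hand side by choosing $\alpha^* $ large enough that $\alpha^3 \ge 2C\lVert V\rVert_\infty^2$; this determines $\alpha^*$ in terms of $\lVert V\rVert_\infty$ and the other parameters.

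Finally, since $w = \phi(|x|)$ is radial and monotone, $e^{2\alpha w}$ is comparable on each annulus to a constant times its value on a chosen reference sphere; replacing $e^{2\alpha w}$ by its maximum over $Z_1$ (resp.\ $Z_3$, resp.\ $\ballc{R_3}$) on the right and by its minimum over $Z_2$ on the left, and dividing through, produces ratios of the form $(\sup_{Z_1} e^{w} / \inf_{Z_2} e^{w})^{2\alpha}$. The explicit bounds on $w$ from \cite{NakicRT} convert these into powers of $R_2\mu_1\Elliptic/r_1$ and $R_2\mu_1\Elliptic/r_3$, which is precisely the form claimed, with $D_1, D_2, D_3$ collecting all the resulting constants (the Caccioppoli constants, the commutator constant $C$, the factor from $\alpha$-absorption, and the weight comparison factors). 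I expect the main obstacle to be the bookkeeping in the last step: one must track how the crude upper/lower bounds on the radial weight from \cite{NakicRT}, which are stated in terms of $\mu\sqrt{\Elliptic}$ with the case distinction \eqref{eq:mu1}, assemble into clean powers of $R_2\mu_1\Elliptic/r_j$ uniformly in $\alpha$, and to verify that the constants $D_i$ genuinely depend only on the listed quantities; the Carleman estimate and the Caccioppoli estimate themselves are cited black boxes, so the analytic heart is elsewhere.
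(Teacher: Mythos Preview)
Your strategy is exactly the paper's: apply the Carleman estimate of \cite{NakicRT} to $\eta\psi$ for a smooth cutoff $\eta$ equal to $1$ on $Z_2$ with $\nabla\eta$ supported in shrunk annuli $Z_1'\subset Z_1$ and $Z_3'\subset Z_3$, expand $\Op(\eta\psi)$ by the Leibniz rule, control the resulting gradient terms via the Caccioppoli inequality (Lemma~\ref{lemma:Cacciopoli}), absorb the $\lVert V\rVert_\infty^2$ contribution by taking $\alpha^*$ large, and finally replace the weight by its extremal values on each annulus. The approximation by $C_{\mathrm c}^\infty$ functions using \eqref{eq:core} is also how the paper handles general $\psi\in\Dom(H)$.

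One correction is worth flagging, since you identify the weight bookkeeping as the main obstacle: the Carleman estimate in \cite{NakicRT} (Theorem~\ref{thm:carleman}) does \emph{not} use an exponential weight $e^{2\alpha w}$ but a power weight. It reads
\[
\int \bigl(\alpha\rho^2 w^{1-2\alpha}\nabla u^\T A\nabla u + \alpha^3 w^{-1-2\alpha}|u|^2\bigr)
\le C\rho^4\int w^{2-2\alpha}|\Op u|^2,
\]
with $w(x)=\varphi(\sigma(x/\rho))$ and $\sigma(x)=(x^\T A^{-1}(0)x)^{1/2}$; so $w$ is radial in the metric of $A(0)$, not Euclidean-radial. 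The two-sided bound \eqref{eq:weightbound2},
\[
\frac{|x|}{\rho\sqrt{\Elliptic}\,\mu_1}\le w(x)\le \frac{\sqrt{\Elliptic}\,|x|}{\rho},
\]
is what converts $w^{-2\alpha}$ on $Z_2$ versus $Z_1,Z_3$ directly into the clean powers $(R_2\mu_1\Elliptic/r_j)^{2\alpha}$ --- no case analysis beyond the definition of $\mu_1$ is needed at this point. With an exponential weight the final ratios would not assemble into this form.
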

\begin{lemma}\label{lemma:three_annuli}
 Let $\epsilon = 1$. Then
  \begin{align*}
   D_1 &\leq \frac{(R_1 - r_1)^{-2} R_2 \euler^{K (R_3 + 1)}   }{\min \{(R_1 - r_1)^2/16 , 1\}}
 \left(1 + \lVert V \rVert_\infty^2 + \lVert b \rVert_\infty^2 + \lVert c \rVert_\infty^2 \right), \\[1ex]
   D_2 &\leq \frac{(R_3-r_3)^{-2} R_2 \euler^{K (R_3 + 1)}}{\min\{(R_3-r_3)^2/16 , 1\}}
 \left( 1+\lVert V \rVert_\infty^2 + \lVert b \rVert_\infty^2 + \lVert c \rVert_\infty^2  \right),  \\[1ex]
   D_3 &\leq
   \left( (R_1 - r_1)^{-4} + (R_3 - r_3)^{-4} + 1  \right) R_1^2 R_2 \euler^{K (R_3 + 1)} , \ \text{and} \\[1ex]
   \alpha^* &\leq \euler^{K (R_3 + 1)} \left(1 + \lVert V \rVert_\infty^{2/3} + \lVert b \rVert_\infty^2 + \lVert c \rVert_\infty^{2/3} \right) ,
  \end{align*}
 where $K \geq 1$ is a constant depending only on $d$, $\Elliptic$ and $\Lipschitz$.
\end{lemma}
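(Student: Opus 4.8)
The plan is to establish Lemma~\ref{lemma:three_annuli} by retracing the proof of Theorem~\ref{thm:three_annuli} with $\epsilon = 1$ fixed and bookkeeping the explicit dependence of $\alpha^*$, $D_1$, $D_2$, $D_3$ on the parameters. Recall that that proof starts from the quantitative Carleman estimate of \cite{NakicRT}, whose weight function $w$ and its first two derivatives on $\ballc{R_3}$ admit two-sided bounds explicit in $\Elliptic$, $\Lipschitz$, $R_3$ and the scale $\mu_1$ of \eqref{eq:mu1}. One applies the Carleman inequality with large parameter $\alpha$ to $\eta\psi$, where $\eta$ is a cutoff equal to $1$ on $Z_2$ that vanishes on $\ballc{r_1}$ and outside $\ballc{R_3}$ and makes its transitions inside $Z_1$ and $Z_3$; one rewrites $\diver(A\nabla(\eta\psi))$ on $\supp\eta$ by means of $\lvert H\psi\rvert \le \lvert V\psi\rvert + \lvert\zeta\rvert$ and the relation between $H$ and its principal part $-\diver(A\nabla\cdot)$; and one splits off the commutator $[\diver(A\nabla\cdot),\eta]\psi$, which is supported in $Z_1\cup Z_3$. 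Each constant in the final inequality originates from exactly one of three sources, and the lemma is obtained by collecting them: (i) the weight bounds of \cite{NakicRT}; (ii) the derivatives of $\eta$; and (iii) the thresholds on $\alpha$ needed to absorb the lower order terms into the non-negative left-hand side of the Carleman estimate, which controls a weighted $L^2$-norm of $\eta\psi$ with weight of size $\sim\alpha^3$ and one of $\nabla(\eta\psi)$ with weight of size $\sim\alpha$.

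First I would dispose of source (i). With $\epsilon = 1$ one has $\mu = 33 d R_3 \Elliptic^{11/2}\Lipschitz + 1$, and a short computation shows $\mu_1 \le \euler^{1 + \mu\sqrt{\Elliptic}} \le \euler^{K_0(R_3 + 1)}$ in either case of \eqref{eq:mu1}, for a constant $K_0 = K_0(d,\Elliptic,\Lipschitz) \ge 1$. The weight bounds of \cite{NakicRT} are polynomial in $\Elliptic$, $\Lipschitz$ and $\mu_1$ and involve $R_3$ otherwise only through a handful of explicit powers, so every factor they contribute is $\le \euler^{K_1(R_3+1)}$ for a possibly larger $K_1(d,\Elliptic,\Lipschitz)$; in particular, the step that converts the weighted $L^2$-bound over $Z_2$ back into the plain $L^2$-norm contributes a factor $\sup_{Z_2} w \le R_2\,\euler^{K_1(R_3+1)}$, which is the source of the common prefactor $R_2\,\euler^{K(R_3+1)}$ in all four bounds.

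Sources (ii) and (iii) are what must be tracked by hand. Building $\eta$ explicitly so that $\lvert\nabla\eta\rvert\lesssim (R_1 - r_1)^{-1}$ and $\lvert\nabla^2\eta\rvert\lesssim(R_1-r_1)^{-2}$ on the inner transition annulus (and analogously with $R_3-r_3$ outside), pairing the commutator terms with the Carleman weight, and invoking a Caccioppoli estimate on $Z_1$ and $Z_3$ to trade $\int\lvert\nabla\psi\rvert^2$ for $\int\lvert\psi\rvert^2$ (plus a $\lVert\zeta\rVert^2$-term) yields the prefactor $(R_1-r_1)^{-2}$ in $D_1$ and $(R_3-r_3)^{-2}$ in $D_2$, while the lower bound on $w$ over the part of $\supp\eta$ where $\eta$ is not in transition yields the denominators $\min\{(R_1-r_1)^2/16,1\}$ and $\min\{(R_3-r_3)^2/16,1\}$. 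The factor $1 + \lVert V\rVert_\infty^2 + \lVert b\rVert_\infty^2 + \lVert c\rVert_\infty^2$ in $D_1$ and $D_2$ is what remains after Cauchy--Schwarz is applied, in the weighted $L^2$-norm, to the principal-part term once $\lvert H\psi\rvert\le\lvert V\psi\rvert + \lvert\zeta\rvert$ and $-\diver(A\nabla\psi) = H\psi - b^\T\nabla\psi - c\psi$ are inserted (the square reflecting the $L^2$-norm); since the $\zeta$-term carries no coefficient of $H$, no such factor appears in $D_3$, which instead inherits the squared cutoff/Caccioppoli weights $(R_1-r_1)^{-4} + (R_3-r_3)^{-4} + 1$ and a radial factor $R_1^2$ coming from the values of $w$ on $Z_1$. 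Finally, the bound on $\alpha^*$ just records source (iii): the zeroth order terms $V\psi$ and $c\psi$ are dominated by the $\sim\alpha^3$-part once $\alpha\gtrsim \lVert V\rVert_\infty^{2/3} + \lVert c\rVert_\infty^{2/3}$, and the first order term $b^\T\nabla\psi$ is dominated by the $\sim\alpha$-part once $\alpha\gtrsim\lVert b\rVert_\infty^2$, which, after multiplication by the weight factor $\euler^{K(R_3+1)}$, gives the asserted form of $\alpha^*$.

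I expect the bookkeeping of sources (ii) and (iii), rather than any conceptual difficulty, to be the main obstacle. One must keep $\eta$ fully explicit, since a generic smooth cutoff would leave the implicit constants in front of $(R_i-r_i)^{-1}$ — and hence the precise shape $\min\{(R_i-r_i)^2/16,1\}$ — undetermined. One has to check that the high powers of $\Elliptic$ and $\Lipschitz$ coming from the weight of \cite{NakicRT}, such as the $\Elliptic^{11/2}$ inside $\mu$, really collapse into the single exponential $\euler^{K(R_3+1)}$ without infecting either the exponent $2\alpha$ or the rational prefactors. And one must handle the $b$-term with care: it is the decision to absorb the contribution $\lVert b\rVert_\infty^2\lvert\nabla\psi\rvert^2$ into the gradient part of the Carleman estimate, rather than into the $\alpha^3$-part, that forces $\lVert b\rVert_\infty^2$ — and not $\lVert b\rVert_\infty^{2/3}$ — to appear both in $\alpha^*$ and in the residual contributions to $D_1$ and $D_2$.
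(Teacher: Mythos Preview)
Your approach is the right one and matches the paper's: the lemma is not proved separately there but is read off from the explicit formulas for $D_1,D_2,D_3,\alpha^*$ that emerge at the end of the proof of Theorem~\ref{thm:three_annuli}, together with the explicit bounds on the Carleman constants $C,\alpha_0$ stated in the Remark following Theorem~\ref{thm:carleman}. With $\epsilon=1$ one estimates $\mu,\mu_1,C,\tilde\alpha_0$ by $\euler^{K(R_3+1)}$ as you indicate, and the rest is algebra.

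A couple of your attributions are off and would trip you up if you followed them literally. The denominator $\min\{(R_i-r_i)^2/16,1\}$ does not come from the weight $w$; it comes from the Cacciopoli constant $F_{(R_i-r_i)/4}$ of Lemma~\ref{lemma:Cacciopoli}, where the term $8\Elliptic^2C'/\kappa^2$ with $\kappa=(R_i-r_i)/4$ competes with the constant~$1$. Similarly, the factor $1+\lVert V\rVert_\infty^2+\lVert b\rVert_\infty^2+\lVert c\rVert_\infty^2$ in $D_1,D_2$ is not produced by inserting $-\diver(A\nabla\psi)=H\psi-b^\T\nabla\psi-c\psi$ and applying Cauchy--Schwarz to the principal part; it arises from $F_\kappa$ (which already contains $\lVert V\rVert_\infty^2,\lVert b\rVert_\infty^2,\lVert c\rVert_\infty$) and from the pointwise bound on $\Op_c\eta$ (which carries $\lVert b\rVert_\infty$). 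Finally, for $\alpha^*$: the contributions $\lVert b\rVert_\infty^2$ and $\lVert c\rVert_\infty^{2/3}$ sit inside $\alpha_0$ of the Carleman estimate itself (see the Remark after Theorem~\ref{thm:carleman}), whereas $\lVert V\rVert_\infty^{2/3}$ enters only through the auxiliary threshold $\alpha_1$ introduced in the three-annuli proof to absorb the $\lvert V\psi\rvert^2$ term. None of this changes the strategy, but it will change where you look when you do the bookkeeping.
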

In order to prove Theorem~\ref{thm:three_annuli},
we start with a formulation of the the \emph{Cacciopoli inequality},
which may be found in \cite{RojasMolinaV-13} for
the pure Laplacian
and in \cite{BorisovTV-17} for second order elliptic differential operators.
\begin{lemma}[Cacciopoli inequality from \cite{BorisovTV-17}] \label{lemma:Cacciopoli}
Let $0 < \rho_1 < \rho_2$, $\kappa \in (0,\rho_1)$, $\omega = \ballc{\rho_2} \setminus \overline{\ballc{\rho_1}}$, $\omega^+ = \ballc{\rho_2 + \kappa} \setminus \overline{\ballc{\rho_1 - \kappa}}$, $V \colon \RR^d \to \RR$ bounded and measurable, $\xi \in L^2 (\RR^d)$, $u \in C_{\mathrm{c}}^\infty (\RR^d)$ satisfying $\lvert \Op u \rvert \leq \lvert V u \rvert + \lvert \xi \rvert$
almost everywhere on $\omega^+$.
Then there is an absolute constant $C' \geq 1$ such that
\begin{multline*}
  \int_{\omega} {\nabla u^\T A \nabla \overline{u}}
  \leq
  \Cac_{\kappa} \int_{\omega^+} \lvert u \rvert^2   + 2 \int_{\omega^+} \lvert \xi \rvert^2 ,
  \\
  \text{where } \quad
  \Cac_{\kappa} := \Cac_\kappa (V , b , c, \Elliptic):=
  1 + 2 \lVert V \rVert_\infty^2 +  2\lVert b \rVert_\infty^2 + 2 {\lVert c \rVert_\infty}+ \frac{8\Elliptic^2 C'}{\kappa^2} .
\end{multline*}
\end{lemma}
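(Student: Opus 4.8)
This is the standard Cacciopoli (reverse Poincar\'e) estimate adapted to the operator $\Op$; since $u\in C_{\mathrm c}^\infty(\RR^d)$ all the manipulations below are licit with no regularity provisos. The plan is: fix a cutoff adapted to the pair $\omega\subset\omega^+$, test the differential inequality against $\eta^2\overline u$, integrate by parts once in the principal part, and absorb the resulting gradient terms into a fraction of the energy form on the left.

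First I would choose $\eta\in C_{\mathrm c}^\infty(\RR^d)$ with $0\le\eta\le 1$, $\eta\equiv 1$ on $\omega$, $\supp\eta\subset\omega^+$, and $\lVert\nabla\eta\rVert_\infty\le c_0/\kappa$ for an absolute constant $c_0$; this is possible because each of the two shells separating $\partial\omega$ from $\partial\omega^+$ has width $\kappa$, and the hypothesis $\kappa<\rho_1$ keeps $\omega^+$ away from the origin, so the annular geometry causes no trouble. Next I would multiply $\Op u$ by $\eta^2\overline u$, integrate over $\RR^d$, and integrate by parts once in $-\diver(A\nabla u)$ — which differentiates $\eta^2\overline u$ but never $A$, which is precisely why $\Lipschitz$ does not enter the final constant. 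Writing $\mathcal E:=\int_{\RR^d}\eta^2\,\nabla u^\T A\,\overline{\nabla u}\ge 0$, this produces the identity
\[
\mathcal E=\int_{\RR^d}\Op u\,\eta^2\overline u-2\int_{\RR^d}\eta\,\overline u\,\nabla\eta^\T A\nabla u-\int_{\RR^d}b^\T\nabla u\,\eta^2\overline u-\int_{\RR^d}c\,\eta^2\lvert u\rvert^2 ,
\]
all integrands supported in $\omega^+$, and $\int_\omega\nabla u^\T A\,\nabla\overline u\le\mathcal E$ because $\eta\equiv1$ on $\omega$.

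It remains to estimate the four terms on the right. For the first, the hypothesis $\lvert\Op u\rvert\le\lvert Vu\rvert+\lvert\xi\rvert$ on $\omega^+$ gives $\lvert\int\Op u\,\eta^2\overline u\rvert\le\int_{\omega^+}(\lvert Vu\rvert+\lvert\xi\rvert)\eta^2\lvert u\rvert$, which by Young's inequality is bounded by a constant multiple of $\int_{\omega^+}\lvert u\rvert^2$ (carrying a factor $\lVert V\rVert_\infty^2$) plus $2\int_{\omega^+}\lvert\xi\rvert^2$. For the cross term I would use the Cauchy--Schwarz inequality for the nonnegative form $A$ together with $\nabla\eta^\T A\nabla\eta\le\Elliptic\lvert\nabla\eta\rvert^2$ to get the pointwise bound $\lvert\nabla\eta^\T A\nabla u\rvert\le\Elliptic^{1/2}\lvert\nabla\eta\rvert(\nabla u^\T A\overline{\nabla u})^{1/2}$, then Young's inequality to bound $2\lvert\int\eta\overline u\nabla\eta^\T A\nabla u\rvert$ by $\tfrac14\mathcal E$ plus a multiple of $\Elliptic\lVert\nabla\eta\rVert_\infty^2\int_{\omega^+}\lvert u\rvert^2$. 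For the first-order term I would insert $\lvert\nabla u\rvert^2\le\Elliptic\,\nabla u^\T A\overline{\nabla u}$ and apply Young's inequality to bound $\lvert\int b^\T\nabla u\,\eta^2\overline u\rvert$ by $\tfrac14\mathcal E$ plus a multiple of $\int_{\omega^+}\lvert u\rvert^2$ carrying $\lVert b\rVert_\infty^2$ and a power of $\Elliptic$; the last term is simply $\le\lVert c\rVert_\infty\int_{\omega^+}\lvert u\rvert^2$. Transferring the two $\tfrac14\mathcal E$-contributions to the left, multiplying through by $2$, inserting $\lVert\nabla\eta\rVert_\infty\le c_0/\kappa$, and using $\int_\omega\nabla u^\T A\,\nabla\overline u\le\mathcal E$ yields the claim, with $C'$ collecting $c_0^2$ and the numerical Young constants and $\Cac_\kappa=1+2\lVert V\rVert_\infty^2+2\lVert b\rVert_\infty^2+2\lVert c\rVert_\infty+8\Elliptic^2C'/\kappa^2$. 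The one point that requires care is precisely this final bookkeeping: the Young weights must be tuned so that after every replacement of $\lvert\nabla u\rvert^2$ by the $A$-energy density (at the cost of a factor $\Elliptic$) the total coefficient of $\mathcal E$ on the right stays $\le\tfrac12$, while the surviving coefficient of $\int_{\omega^+}\lvert u\rvert^2$ has exactly the stated dependence on $\Elliptic$ and $\kappa$ and no dependence on $\Lipschitz$. This elementary but slightly delicate computation is the one carried out in \cite{BorisovTV-17}; different admissible choices of the weights give comparable constants.
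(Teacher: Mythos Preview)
The paper does not give its own proof of this lemma; it is quoted verbatim from \cite{BorisovTV-17} and used as a black box in the proof of Theorem~\ref{thm:three_annuli}. Your argument is precisely the standard Cacciopoli/reverse Poincar\'e computation one expects in this setting---cutoff, test against $\eta^2\overline u$, one integration by parts in the divergence term, absorb the cross terms via Young---and is essentially how the result is established in the cited reference. One small remark: since $\mathcal E$ is real while the four integrals on the right of your identity are in general complex, you should take real parts before estimating; this is harmless because your term-by-term bounds are all on absolute values anyway. Your closing caveat about the bookkeeping is well placed: the precise coefficients $2\lVert b\rVert_\infty^2$ (with no visible $\Elliptic$) and $8\Elliptic^2 C'/\kappa^2$ require a specific choice of Young weights, and a careless choice would produce e.g.\ $\Elliptic\lVert b\rVert_\infty^2$ instead; but as you say, any reasonable tuning yields a constant of the same structure, and the exact numerical values are immaterial for the subsequent arguments.
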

To formulate the Carleman estimate from \cite{NakicRT} we need some notation.
For $\mu,\rho > 0$ we introduce a function $w_{\rho , \mu} : \RR^d \to [0,\infty)$ by $w_{\rho , \mu} (x) := \varphi (\sigma (x / \rho))$, where $\sigma:\RR^d \to [0,\infty)$ and $\varphi : [0,\infty) \to [0,\infty)$ are given by
\begin{align*}
\sigma(x):= \left(x^\T A^{-1} (0) x \right)^{1/2}, \quad \text{and} \quad
\varphi(r):= r \exp \left( - \int_0^r\frac{1 - e^{-\mu t}}{t} \mathrm{d} t \right).
\end{align*}
Note that the function $w_{\rho , \mu}$ satisfies
\begin{equation}  \label{eq:weightbound2}
 \forall x \in \ballc{\rho} \colon \quad
 \frac{\Elliptic^{-1/2} \lvert x \rvert}{\rho \mu_1} \leq \frac{\sigma (x)}{\rho \mu_1} \leq
 w_{\rho , \mu}(x)
 \leq
 \frac{\sigma (x)}{\rho}
 \leq
 \frac{\sqrt{\Elliptic} \lvert x \rvert}{\rho}  ,
\end{equation}
where $\mu_1$ is as in \eqref{eq:mu1}.
\begin{theorem}[Carleman estimate from \cite{NakicRT}] \label{thm:carleman}
Let $\rho > 0$ and $\mu > 33 d \rho \Elliptic^{11/2} \Lipschitz $.
Then there are constants $\alpha_0 = \alpha_0 (d, \rho  , \Elliptic , \Lipschitz, \mu , \lVert b \rVert_\infty , \lVert c \rVert_\infty) > 0$
and $C = C (d , \Elliptic , \rho\Lipschitz, \mu) > 0$, such that for all $\alpha \geq \alpha_0$ and all
$u \in C_{\mathrm{c}}^\infty (\ballc{\rho} \setminus \{0\})$
we have
\begin{equation*}
 \int_{\RR^d} \left( \alpha \rho^2 w_{\rho , \mu}^{1-2\alpha} \nabla u^\T A \nabla u + \alpha^3 w_{\rho , \mu}^{-1-2\alpha} \lvert u \rvert^2  \right) \leq C \rho^4 \int_{\RR^d} w_{\rho , \mu}^{2-2\alpha} \lvert \Op u\rvert^2 .
\end{equation*}
\end{theorem}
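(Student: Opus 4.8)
\textbf{Proof strategy for Theorem~\ref{thm:carleman}.}
The plan is to reduce the Carleman estimate to the one available in \cite{NakicRT} by a careful tracking of the weight function and the constants, since the statement here is precisely a repackaging of the main result of that reference with the explicit weight $w_{\rho,\mu}$ built from $\sigma$ and $\varphi$. First I would recall the structure of the weight: $\varphi(r) = r\exp(-\int_0^r (1-e^{-\mu t})/t\,\drm t)$ is chosen so that $\varphi$ is increasing, $\varphi(r)\le r$, and the logarithmic derivative $r\varphi'(r)/\varphi(r) = e^{-\mu r}$ decays, which is exactly the pseudoconvexity-type condition needed for a Carleman inequality with a second-order operator whose leading coefficient has Lipschitz modulus controlled by $\Lipschitz$. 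The composition with $\sigma(x) = (x^\T A^{-1}(0)x)^{1/2}$ adapts the radial weight to the ellipticity of $A$ at the origin, and the scaling $x\mapsto x/\rho$ places the singularity-free region inside $\ballc{\rho}$; the two-sided bound \eqref{eq:weightbound2} records how $w_{\rho,\mu}$ compares to $|x|/\rho$ up to the factor $\mu_1$ and $\sqrt{\Elliptic}$.

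Next I would verify the hypothesis threshold $\mu > 33 d \rho \Elliptic^{11/2}\Lipschitz$: this is the quantitative condition under which the Carleman estimate of \cite{NakicRT} holds, and it is designed precisely so that the error terms coming from differentiating $A(x)-A(0)$ against the weight are absorbed by the good terms $\alpha\rho^2 w^{1-2\alpha}\nabla u^\T A\nabla u$ and $\alpha^3 w^{-1-2\alpha}|u|^2$ on the left. With that condition in force, \cite{NakicRT} provides, for $\alpha$ larger than some $\alpha_0$ depending on the listed parameters, the differential inequality
\begin{equation*}
 \int_{\RR^d} \left( \alpha \rho^2 w_{\rho , \mu}^{1-2\alpha} \nabla u^\T A \nabla u + \alpha^3 w_{\rho , \mu}^{-1-2\alpha} \lvert u \rvert^2  \right) \leq C \rho^4 \int_{\RR^d} w_{\rho , \mu}^{2-2\alpha} \lvert - \diver(A\nabla u)\rvert^2 ,
\end{equation*}
for $u\in C_{\mathrm c}^\infty(\ballc{\rho}\setminus\{0\})$, with $C = C(d,\Elliptic,\rho\Lipschitz,\mu)$. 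The only remaining point is to pass from the principal part $-\diver(A\nabla u)$ to the full operator $\Op u = -\diver(A\nabla u) + b^\T\nabla u + cu$ on the right-hand side. Since the statement keeps $\lvert \Op u\rvert^2$ rather than $\lvert -\diver(A\nabla u)\rvert^2$, this direction is the easy one: writing $-\diver(A\nabla u) = \Op u - b^\T\nabla u - cu$ and using $|p+q+r|^2 \le 3(|p|^2+|q|^2+|r|^2)$ produces lower-order terms $3\lVert b\rVert_\infty^2\int w^{2-2\alpha}|\nabla u|^2$ and $3\lVert c\rVert_\infty^2\int w^{2-2\alpha}|u|^2$, which — after using $w_{\rho,\mu}\le \sqrt{\Elliptic}$ on $\ballc{\rho}$ to compare the power $2-2\alpha$ with $1-2\alpha$ and $-1-2\alpha$ respectively, up to harmless constants — are absorbed into the left-hand side by enlarging $\alpha_0$ to depend additionally on $\lVert b\rVert_\infty$ and $\lVert c\rVert_\infty$.

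I expect the genuine content to reside entirely in \cite{NakicRT}, so the main obstacle here is not a new estimate but bookkeeping: one must check that the weight $w_{\rho,\mu}$ as defined via $\sigma$ and $\varphi$ coincides (up to the conventions of \cite{NakicRT}) with the weight used there, that the scaling in $\rho$ produces exactly the powers $\rho^2$ and $\rho^4$ displayed, and that the absorption of the $b$- and $c$-terms is legitimate for all $\alpha\ge\alpha_0$ with $\alpha_0$ of the claimed dependence. A secondary subtlety is the exclusion of the origin: $u$ vanishes near $0$, so the singularity of $w_{\rho,\mu}^{-1-2\alpha}$ there is irrelevant, and no regularization argument is needed. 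Once these identifications are made, the theorem follows by citing \cite{NakicRT} and performing the one-line Young-type splitting described above.
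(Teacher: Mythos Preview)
Your proposal is correct and matches the paper's treatment: the theorem is quoted from \cite{NakicRT} rather than proved here, and the only additional step---absorbing the lower-order terms $b^\T\nabla u$ and $cu$ into the left-hand side by enlarging $\alpha_0$---is exactly what the paper records in the Remark following the statement (yielding $C=6\tilde C$ and $\alpha_0=\max\{\tilde\alpha_0, C\rho^2\lVert b\rVert_\infty^2\Elliptic^{3/2}, C^{1/3}\rho^{4/3}\lVert c\rVert_\infty^{2/3}\sqrt{\Elliptic}\}$). There is nothing further to add.
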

\begin{remark}
Upper bounds for the constants $C$ and $\alpha_0$ are known explicitly, see \cite{NakicRT}. In the case where $b$ and $c$ are identically zero, the conclusion of Theorem~\ref{thm:carleman} holds with $C = \tilde C$ and $\alpha_0 = \tilde \alpha_0$ satisfying the upper bounds
\begin{align*}
 \tilde C &\leq 2d^2  \Elliptic^8 \euler^{4\mu\sqrt{\Elliptic}} \mu_1^4 \left( 3 \mu^2 + (9\rho \Lipschitz + 3)\mu + 1 \right) C_\mu^{-1} \\
 \intertext{and}
 \tilde \alpha_0 & \leq 11 d^4 \Elliptic^{33/2} \euler^{6\mu\sqrt{\Elliptic}}   \mu_1^6  (3\rho\Lipschitz + \mu + 1)^2 \left(1  + \mu (\mu + 1) C_\mu^{-1}   \right) ,
\end{align*}
where $C_\mu = \mu - 33 d \Elliptic^{11/2} \Lipschitz \rho$. In the general case where $b,c \in L^\infty (\ballc{\rho})$ the conclusion of the theorem holds with
\[
 C = 6 \tilde C
 \quad \text{and} \quad
 \alpha_0 =\max \left\{ \tilde\alpha_0, C \rho^2  \lVert  b  \rVert_{\infty}^2 \Elliptic^{3/2},
C^{1/3} \rho^{4/3} \lVert c \rVert_{\infty}^{2/3}\sqrt{\Elliptic}   \right\} .
\]
\end{remark}
\begin{proof}[Proof of Theorem~\ref{thm:three_annuli}]
 In order to use the Cacciopoli inequality we need slightly narrower auxiliary annuli.
 Thus we introduce $r_1' = r_1 + (R_1 - r_1)/4$,  $R_1' = R_1 - (R_1 - r_1)/4$, $r_3' = r_3 + (R_3 - r_3)/4$, $R_3' = R_3 - (R_3 - r_3)/4$, and the subsets
 \begin{equation*}
  Z_1' = \ballc{R_1'} \setminus \overline{\ballc{r_1'}} \subset Z_1
  \quad \text{and} \quad
  Z_3' = \ballc{R_3'} \setminus \overline{\ballc{r_3'}} \subset Z_3 .
 \end{equation*}
Furthermore, we choose a cutoff function $\eta \in C_{\mathrm c}^\infty (\RR^{d})$ with $0 \leq \eta \leq 1$, $\supp \eta \subset \ballc{R_3'} \setminus \overline{\ballc{r_1'}}$,
$\eta (x) = 1$ for all $x \in \ballc{r_3'} \setminus \overline{\ballc{R_1'}}$,
and
\begin{equation}\label{eq:Theta_eta}
\begin{split}
\max\{\lVert \Delta \eta \rVert_{\infty , Z_1'} , \lVert \lvert \nabla \eta \rvert \rVert_{\infty , Z_1'}\} \leq \frac{\tilde\Theta}{(R_1 - r_1)^2} =: & \Theta_1
\\
 \max\{\lVert \Delta \eta \rVert_{\infty , Z_3'} , \lVert \lvert \nabla \eta \rvert \rVert_{\infty , Z_3'}\} \leq \frac{\tilde\Theta}{(R_3 - r_3)^2} =: &\Theta_3 ,\\
\end{split}
\end{equation}
where $\tilde\Theta$ depends only on the
dimension.\footnote{Compared to the published version \cite{TautenhahnV-20} set $\tilde\Theta:=\max\{1,\tilde\Theta_1,\tilde\Theta_3\}.$ }
We set
\[
 \rho := R_3
 \quad\text{and}\quad
 \mu  :=  33 d \rho \Elliptic^{11/2} \Lipschitz + \epsilon
\]
and fix $\psi \in \Dom (H)$.
In order to apply the Cacciopoli and the Carleman inequality we will approximate the function
$\psi$ in the domain
by smoother functions.
By \eqref{eq:core} there is a sequence $(\psi_n)_{n \in \NN}$ in $C_{\mathrm{c}}^\infty (\RR^d)$ such that $\psi_n \to \psi$ and $H \psi_n \to H \psi$ in $L^2 (\RR^d)$. We apply the Carleman estimate from Theorem~\ref{thm:carleman} to the function $u = \eta \psi_n$ and obtain for all
$\alpha \geq \alpha_0 =
\alpha_0 (d, \rho  , \Elliptic , \Lipschitz, \mu , \lVert b \rVert_\infty , \lVert c \rVert_\infty)$ and all $n \in \NN$
\begin{equation} \label{eq:starting}
 \int_{\ballc{\rho}} \alpha^3 w^{-1-2\alpha} \lvert \eta \psi_n  \rvert^2  \leq \rho^4 C \int_{\ballc{\rho}} w^{2-2\alpha} \lvert \Op(\eta \psi_n) \rvert^2 =: I_1 ,
\end{equation}
where $C = C (d , \Elliptic , \rho \Lipschitz , \mu) > 0$ and $w = w_{\rho , \mu}$.
By
the Leibniz rule and $(a+b+c)^2 \leq 3(a^2+b^2+c^2)$ this yields that $I_1$ is bounded by
\begin{align}
 I_1 & =  \rho^4 C \int_{\ballc{\rho}} w^{2-2\alpha} \biggl\lvert ( \Op_c \eta ) \psi_n + (\Op \psi_n)\eta + 2 \sum_{i,j=1}^d a^{ij} (\partial_i \eta)(\partial_j \psi_n) \biggr\rvert^2  \nonumber \\
& \leq 3 \rho^4 C  \int_{\ballc{\rho}} w^{2-2\alpha} \biggl( \lvert \Op_c \eta  \rvert^2 \lvert \psi_n \rvert^2 + \lvert \Op \psi_n \rvert^2 \eta^2 + 4 \Bigl \lvert \sum_{i,j=1}^d a^{ij} (\partial_i \eta)(\partial_j \psi_n) \Bigr \rvert^2 \biggr)   \label{eq:Leibnitz} ,
\end{align}
where $\Op_c \eta = -\diver (A \nabla \eta) + b^\T \nabla \eta$. Since $a^{ij} = a^{ji}$ and $A (x) = (a^{ij} (x))_{i,j=1}^d$ is positive definite for all $x \in \ballc{\rho}$, we can apply Cauchy Schwarz and obtain
 \begin{align*}
\Bigl\lvert \sum_{i,j=1}^d a^{ij} (\partial_i \eta)(\partial_j \psi_n) \Bigr \rvert^2 &\leq \Bigl( \sum_{i,j=1}^d a^{ij} (\partial_i \eta)(\partial_j \eta) \Bigr) \Bigl( \sum_{i,j=1}^d a^{ij} (\partial_i \overline{\psi_n})(\partial_j \psi_n) \Bigr)
 \leq \Elliptic \lvert \nabla\eta\rvert^2(\nabla\psi_n^\T A \nabla\overline{\psi_n}) .
\end{align*}
Since $\Op_c \eta \not = 0$ only on $\supp \nabla \eta \subset Z_1' \cup Z_3'$ we have by using
the bounds \eqref{eq:Theta_eta} on the function $\eta$
\begin{multline*}
 I_2 :=
 \int_{\ballc{\rho}} w^{2-2\alpha} \biggl( \lvert \Op_c \eta  \rvert^2 \lvert \psi_n \rvert^2 + 4 \Bigl \lvert \sum_{i,j=1}^d a^{ij} (\partial_i \eta)(\partial_j \psi_n) \Bigr \rvert^2 \biggr) \\
 \leq \int_{Z_1'} w^{2-2\alpha}
       \left( (\Op_c \eta)^2 \lvert \psi_n \rvert^2 + 4 {\Elliptic} \Theta_1^2 {\nabla\psi_n^\T A \nabla\overline{\psi_n}} \right) \\
 +  \int_{Z_3'} w^{2-2\alpha} \Bigl( (\Op_c \eta)^2 \lvert \psi_n \rvert^2 + 4 {\Elliptic} \Theta_3^2 {\nabla\psi_n^\T A \nabla\overline{\psi_n}} \Bigr)   .
\end{multline*}
We use the bound on the weight function from Ineq.~\eqref{eq:weightbound2} and obtain
\begin{multline*}
 I_2 \leq \left( \frac{\rho \sqrt{\Elliptic} \mu_1}{r_1'} \right)^{2\alpha - 2} \int_{Z_1'}
       \left( (\Op_c \eta)^2 \lvert \psi_n \rvert^2 + 4 {\Elliptic} \Theta_1^2 {\nabla\psi_n^\T A \nabla\overline{\psi_n}} \right)
       \\
 + \left( \frac{\rho \sqrt{\Elliptic} \mu_1}{r_3'} \right)^{2\alpha - 2} \int_{Z_3'}  \Bigl( (\Op_c \eta)^2 \lvert \psi_n \rvert^2 + 4 {\Elliptic} \Theta_3^2 {\nabla\psi_n^\T A \nabla\overline{\psi_n}} \Bigr)   .
\end{multline*}
Now we use the pointwise estimate
\[
 \lvert \Op_c \eta \rvert^2
 \leq  3\Elliptic^2 \lvert \Delta \eta \rvert^2
 +     3 \Elliptic^2 (2d - 1)^2 \frac{\lvert \nabla \eta \rvert^2}{\lvert x \rvert^2}
 +    3 (\Lipschitz d^2 + \lVert b \rVert_\infty)^2 \lvert \nabla \eta \rvert^2 ,
\]
see \cite[Appendix~A]{BorisovTV-17}, and obtain again by using the properties of the function $\eta$ that $I_2$ is bounded from above by
\begin{align*}
 &\left(\frac{\rho \mu_1 \sqrt{\Elliptic}}{r_1'}\right)^{2\alpha - 2}
 \Theta_1^2
 \int_{Z_1'}
       \left[  \left(3 \Elliptic^2 + \frac{12 \Elliptic^2 d^2}{r_1'^2}  + 3  (\Lipschitz d^2 + \lVert b \rVert_\infty )^2 \right) \lvert \psi_n \rvert^2 + 4{\Elliptic}  {\nabla\psi_n^\T A \nabla\overline{\psi_n}} \right]
       \\
 + &\left(\frac{\rho \mu_1 \sqrt{\Elliptic}}{r_3'}\right)^{2\alpha - 2} \Theta_3^2
 \int_{Z_3'}  \left[  \left(3 \Elliptic^2 +  \frac{12 \Elliptic^2 d^2}{r_3'^2} + 3 (\Lipschitz d^2 + \lVert b \rVert_\infty )^2\right) \lvert \psi_n \rvert^2 + 4 {\Elliptic}  {\nabla\psi_n^\T A \nabla\overline{\psi_n}}  \right]  .
\end{align*}
Recall that by assumption we have $\lvert H \psi \rvert \leq \lvert V \psi \rvert + \lvert \zeta \rvert$ almost everywhere on $\ballc{R_3}$. Hence,
\[
\lvert \Op \psi_n \rvert = \lvert H \psi_n \rvert \leq \lvert H \psi \rvert + \lvert H (\psi - \psi_n) \rvert \leq \lvert V \psi \rvert + \lvert \zeta \rvert + \lvert H (\psi - \psi_n) \rvert.
\]
An application of Lemma~\ref{lemma:Cacciopoli} with
$\xi = \xi_n := \lvert \zeta \rvert +
\lvert H (\psi - \psi_n) \rvert +\vert V(\psi-\psi_n)\vert$,
and $\rho_1 = r_1'$, $\rho_2 = R_1'$ and $\kappa = (R_1 - r_1)/4$ (i.e.\ $\omega = Z_1'$ and $\omega_+ = Z_1$) for the first summand and $\rho_1 = r_3'$, $\rho_2 = R_3'$ and $\kappa = (R_3 - r_3) / 4$ (i.e.\ $\omega = Z_3'$ and $\omega_+ = Z_3$) for the second summand gives
\begin{multline} \label{eq:I2upper}
 I_2 \leq
       \left(\frac{\rho \mu_1 \sqrt{\Elliptic}}{r_1'}\right)^{2\alpha - 2} \Theta_1^2
       \left[ 3 \Elliptic^2 + \frac{12 \Elliptic^2 d^2}{r_1'^2} + 3(\Lipschitz d^2 + \lVert b \rVert_\infty )^2   + 4{\Elliptic}  \Cac_{(R_1 - r_1)/4} \right]       \lVert \psi_n \rVert_{Z_1}^2 \\
 +  \left(\frac{\rho \mu_1 \sqrt{\Elliptic}}{r_3'}\right)^{2\alpha - 2} \Theta_3^2 \left[ 3 \Elliptic^2 + \frac{12\Elliptic^2 d^2}{r_3'^2} + 3 (\Lipschitz d^2 + \lVert b \rVert_\infty )^2  + 4 {\Elliptic}  \Cac_{(R_3 - r_3) / 4}  \right] \lVert \psi_n \rVert_{Z_3}^2  \\
 +  \left(\frac{\rho \mu_1 \sqrt{\Elliptic}}{r_1'} \right)^{2\alpha - 2} 8 (\Theta_1^2 + \Theta_3^2 )
 {\Elliptic} \lVert \xi_n \rVert^2_{Z_1 \cup Z_3}
 \\ =: \bar D_1 \lVert \psi_n \rVert_{Z_1}^2 + \bar D_2\lVert \psi_n \rVert_{Z_3}^2 + \bar D_3 \lVert \xi_n \rVert^2_{Z_1 \cup Z_3} ,
\end{multline}
where $\Cac_{\kappa} = \Cac_\kappa (V , b , c , \Elliptic)$, $\kappa > 0$, is defined in Lemma~\ref{lemma:Cacciopoli}. From \eqref{eq:starting}, \eqref{eq:Leibnitz}, and \eqref{eq:I2upper}, we obtain by using $\Op = H$ on $C_{\mathrm{c}}^\infty$, $\psi_n \to \psi$ and $H \psi_n \to H \psi$ in $L^2 (\RR^d)$, and by taking the limit $n \to \infty$ that
\begin{equation} \label{eq:zwischenresultat}
\frac{\alpha^3}{3 \rho^4 C} \int_{\ballc{\rho}}  w^{-1-2\alpha} \lvert \eta \psi  \rvert^2
\leq \int_{\ballc{\rho}} w^{2-2\alpha} \lvert H \psi \rvert^2 \eta^2 +
\bar D_1 \lVert \psi \rVert_{Z_1}^2 + \bar D_2\lVert \psi \rVert_{Z_3}^2 + \bar D_3 \lVert \zeta \rVert^2_{Z_1 \cup Z_3} .
\end{equation}
The pointwise estimate $\lvert H \psi \rvert \leq \lvert V \psi \rvert + \lvert \zeta \rvert$, and $w \leq \sqrt{\Elliptic}$ on $\ballc{\rho}$ gives
\begin{equation} \label{eq:subsolution}
 \int_{\ballc{\rho}} w^{2-2\alpha} \lvert H \psi \rvert^2\eta^2  \leq   2 \lVert V \rVert_\infty^2 \Elliptic^{3/2} \int_{\ballc{\rho}} w^{-1-2\alpha}  \lvert \eta \psi \rvert^2  +
  2 \int_{\ballc{\rho}\setminus \ballc{r_1'}} w^{2-2\alpha}  \lvert \eta \zeta \rvert^2 .
\end{equation}
From Ineq.~\eqref{eq:zwischenresultat} \& \eqref{eq:subsolution},
and our bounds \eqref{eq:weightbound2} on the weight function we obtain for all $\alpha \geq \alpha_0$
\begin{equation*}
 \left[ \frac{\alpha^3}{3 \rho^4 C} - 2 \lVert V \rVert_\infty^2 \Elliptic^{3/2}  \right] \int_{\ballc{\rho}} w^{-1-2\alpha} \lvert \eta \psi \rvert^2
\leq  \bar D_1 \lVert \psi \rVert_{Z_1}^2 + \bar D_2\lVert \psi \rVert_{Z_3}^2 + \hat D_3 \lVert \zeta \rVert^2_{\ballc{R_3}} ,
\end{equation*}
where
\[
 \hat D_3 = \bar D_3+ 2 \left(\frac{\rho \mu_1 \sqrt{\Elliptic}}{r_1'} \right)^{2\alpha - 2} = \left(\frac{\rho \mu_1 \sqrt{\Elliptic}}{r_1'} \right)^{2\alpha - 2} \bigl(8 (\Theta_1^2 + \Theta_3^2)\Elliptic + 2 \bigr)
\]
We choose
\begin{equation*} 
 \alpha \geq \alpha^*:= \max\{\alpha_0,\alpha_1, 1\}, \ \text{ where } \
 \alpha_1 := \sqrt[3]{16 \rho^4 C \lVert V \rVert_\infty^2 \Elliptic^{3/2}}.
\end{equation*}
and $\alpha_0$ is as in Theorem \ref{thm:carleman}.
This ensures that
\[
\frac{5}{24} \frac{\alpha^3 }{\rho^4 C }  \int_{\ballc{\rho}}  w^{-1-2\alpha} \lvert \eta \psi \rvert^2    \leq \bar D_1 \lVert \psi \rVert_{Z_1}^2 + \bar D_2\lVert \psi \rVert_{Z_3}^2 + \hat D_3 \lVert \zeta \rVert^2_{\ballc{R_3}}  .
\]
Since $\eta \equiv 1$ on $Z_2$ and by our bound on the weight function we have
\begin{equation*}
  \frac{5}{24} \frac{\alpha^3}{\rho^4 C}  \left( \frac{\rho}{\sqrt{\Elliptic} R_2} \right)^{1+2\alpha} \lVert \psi \rVert_{Z_2}^2 \leq
  \bar D_1 \lVert \psi \rVert_{Z_1}^2 + \bar D_2\lVert \psi \rVert_{Z_3}^2 + \hat D_3 \lVert \zeta \rVert^2_{\ballc{R_3}}  .
\end{equation*}
Hence, we have shown the statement of the theorem with
\begin{align*}
 D_1 &= \frac{24}{5}  R_3 C \Elliptic^{-1/2} \mu_1^{-2} r_1'^2 R_2 \Theta_1^2
 \left[ 3 \Elliptic^2 + \frac{12 \Elliptic^2 d^2}{r_1'^2} + 3(\Lipschitz d^2 + \lVert b \rVert_\infty )^2   + 4{\Elliptic}  \Cac_{(R_1 - r_1)/4} \right] ,
 \\
 D_2 &= \frac{24}{5}  R_3 C \Elliptic^{-1/2} \mu_1^{-2} r_3'^2 R_2 \Theta_3^2
 \left[ 3 \Elliptic^2 + \frac{12\Elliptic^2 d^2}{r_3'^2} + 3 (\Lipschitz d^2 + \lVert b \rVert_\infty )^2  + 4 {\Elliptic}  \Cac_{(R_3 - r_3) / 4}  \right] , \\
 D_3 &= \frac{24}{5} R_3 C \Elliptic^{-1/2} \mu_1^{-2} r_1'^2 R_2 \bigl[8(\Theta_1^2 + \Theta_3^2)\Elliptic + 2 \bigr] . \qedhere
\end{align*}
\end{proof}

\newpage
As a corollary to the proof of Theorem \ref{thm:three_annuli} we obtain:

\begin{corollary}[Three annuli inequality on cubes] \label{thm:three_annuli_cubes}
Let $L \in \NN$, $R=3L$, $0 < r_1 < R_1 \leq r_2 < R_2 \leq r_3 < R_3:= (33 \euler d \Elliptic^{11/2} (\Lipschitz + 1))^{-1}$, $\epsilon > 0$,
$\mu = 33 d R_3 \Elliptic^{11/2} \Lipschitz + \epsilon $, and
\begin{equation}\label{eq:mu1-repeated}
 \mu_1 = \mu_1(R_3,\epsilon) =
        \begin{cases}
          \exp(\mu \sqrt{\Elliptic})    & \text{if $\mu \sqrt{\Elliptic} \leq 1$,}\\
          \euler \mu \sqrt{\Elliptic}   & \text{if $\mu \sqrt{\Elliptic} > 1$},
         \end{cases}
\end{equation}
Then, for all measurable and bounded $V \colon \Lambda_R \to \RR$ there are constants (the same as in Theorem \ref{thm:three_annuli})
$\alpha^* \geq 1$ and $D_i > 0$, $i \in \{1,2,3\}$,
depending merely on $r_j$, $R_j$, $j \in \{1,2,3\}$, $\epsilon$, $d$,
$\Elliptic$, $\Lipschitz$, $\lVert V \rVert_\infty$, $\lVert b \rVert_\infty$, and $\lVert c \rVert_\infty$,
such that for all   $x\in \Lambda_L$, for all
$\psi \in \Dom (H_R)$ and  $\zeta \in L^2 (\Lambda_R)$ satisfying $\lvert H_R \psi \rvert \leq \lvert V\psi \rvert + \lvert \zeta \rvert$
almost everywhere on $\ball{R_3}{x}$, and all $\alpha \geq \alpha^*$ we have
\[
 \alpha^3 \lVert \psi \rVert_{Z_2(x)}^2
 \leq
 D_1 \left(\frac{R_2 \mu_1 \Elliptic}{r_1}\right)^{2\alpha} \lVert \psi \rVert_{Z_1(x)}^2
 +
 D_2 \left(\frac{R_2 \mu_1 \Elliptic}{r_3}\right)^{2\alpha} \lVert \psi \rVert_{Z_3(x)}^2
 +
 D_3 \left(\frac{R_2 \mu_1 \Elliptic}{r_1}\right)^{2\alpha} \lVert \zeta \rVert_{\ball{R_3}{x}}^2.
\]
\end{corollary}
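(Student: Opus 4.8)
My plan is to obtain Corollary~\ref{thm:three_annuli_cubes} as a purely interior, localised version of Theorem~\ref{thm:three_annuli}: both the Carleman estimate Theorem~\ref{thm:carleman} and the Cacciopoli inequality Lemma~\ref{lemma:Cacciopoli} are local statements, insensitive to whether the ambient operator is $H$ on $\RR^d$ or $H_R$ on $\Lambda_R$, provided the ball $\ball{R_3}{x}$ on which they are used stays well inside $\Lambda_R$. The first step is the geometric observation that, for $L\in\NN$, $R=3L$ and $x\in\Lambda_L$, one has $\operatorname{dist}(x,\partial\Lambda_R)>L\geq 1$, whereas $R_3=(33\euler d\Elliptic^{11/2}(\Lipschitz+1))^{-1}<1$. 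Hence $\overline{\ball{R_3}{x}}\subset\Lambda_R$ with room to spare, so that all sets occurring in the proof of Theorem~\ref{thm:three_annuli} --- the annuli $Z_1(x),Z_2(x),Z_3(x)$, their shrunk copies $Z_1'(x),Z_3'(x)$, and $\supp\eta(\cdot-x)$ for the cut-off $\eta$ used there --- are compactly contained in $\Lambda_R$. Since nothing in the argument touches $\partial\Lambda_R$, no boundary condition such as (Dir) enters.

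The only genuinely new point, and the step I expect to be the main obstacle, is to replace the approximation step of Theorem~\ref{thm:three_annuli}. There one uses that $C^\infty_{\mathrm c}(\RR^d)$ is an operator core for $H$ to produce $\psi_n\in C^\infty_{\mathrm c}$ with $\psi_n\to\psi$ and $H\psi_n\to H\psi$ in $L^2$; but $C^\infty_{\mathrm c}(\Lambda_R)$ is in general \emph{not} a core for the Friedrichs extension $H_R$, so this cannot be invoked verbatim. I would circumvent it by localising away from the boundary. Fix $\rho'\in(R_3,1)$ and $\chi\in C^\infty_{\mathrm c}(\Lambda_R)$ with $\chi\equiv 1$ on a neighbourhood of $\overline{\ball{R_3}{x}}$ and $\supp\chi\subset\ball{\rho'}{x}$. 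Interior elliptic regularity (the leading term is Lipschitz, the lower order terms bounded) together with $H_R\psi\in L^2(\Lambda_R)$ gives $\psi\in\HZwei(\ball{\rho'}{x})$, where $H_R\psi=\Op_R\psi$ as $L^2$-functions, $\Op_R$ being the differential expression of $H_R$ (which agrees with $H_R$ on $C^\infty_{\mathrm c}(\Lambda_R)$). Consequently $\chi\psi\in\HZwei(\RR^d)$ with support compact in $\Lambda_R$, and mollification produces $\psi_n\in C^\infty_{\mathrm c}(\Lambda_R)$ with $\psi_n\to\chi\psi$ in $\HZwei(\RR^d)$; since for $n$ large $\psi_n$ agrees with a mollification of $\psi$ on a fixed neighbourhood of $\overline{\ball{R_3}{x}}$, one gets $\psi_n\to\psi$ in $\HZwei$, hence $\psi_n\to\psi$ and $H_R\psi_n=\Op_R\psi_n\to\Op_R\psi=H_R\psi$ in $L^2$, on that neighbourhood, which contains $\overline{Z_1(x)}\cup\overline{Z_3(x)}\cup\supp\eta(\cdot-x)$. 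This provides exactly the convergences that the core property supplied in the proof of Theorem~\ref{thm:three_annuli}.

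With these two steps in hand, the remaining work is to re-run the proof of Theorem~\ref{thm:three_annuli} essentially verbatim, with $\RR^d$ replaced by $\Lambda_R$, $H$ by $H_R$, the approximating sequence as just built, and all balls and annuli centred at $x$ instead of $0$ (the weight $w_{\rho,\mu}(\cdot-x)$ being built from $A_R(x)$). One takes $\rho:=R_3$ and $\mu:=33d\rho\Elliptic^{11/2}\Lipschitz+\epsilon$, so that $\mu-33d\rho\Elliptic^{11/2}\Lipschitz=\epsilon>0$ and Theorem~\ref{thm:carleman} applies to $u=\eta(\cdot-x)\psi_n\in C^\infty_{\mathrm c}(\ball{R_3}{x}\setminus\{x\})$; Lemma~\ref{lemma:Cacciopoli} applies because $\lvert H_R\psi\rvert\leq\lvert V\psi\rvert+\lvert\zeta\rvert$ holds almost everywhere on $\ball{R_3}{x}$, which contains the enlarged annuli entering it. Every constant produced along the way --- the weight bounds \eqref{eq:weightbound2}, the Carleman constants $C$ and $\alpha_0$, and the Cacciopoli constants $\Cac_\kappa$ --- depends only on $r_j,R_j,\epsilon,d,\Elliptic,\Lipschitz,\lVert V\rVert_\infty,\lVert b\rVert_\infty,\lVert c\rVert_\infty$ and not on the underlying domain, so letting $n\to\infty$ as at the end of the proof of Theorem~\ref{thm:three_annuli} (cf.\ \eqref{eq:zwischenresultat} and \eqref{eq:subsolution}) reproduces the asserted inequality with precisely the constants $D_1,D_2,D_3$ and $\alpha^*$ of Theorem~\ref{thm:three_annuli}. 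As $x\in\Lambda_L$ was arbitrary, this completes the proof.
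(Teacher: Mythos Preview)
Your proof is correct and reaches the same conclusion, but your approximation step takes a different route from the paper's. The paper does not invoke interior $H^2$ regularity. Instead it works purely at the form level: for a cut-off $\chi\in C_{\mathrm c}^\infty(\Lambda_R)$ with $\mathbf{1}_{\Lambda_{3L/2}}\leq\chi\leq\mathbf{1}_{\Lambda_{2L}}$, it shows $\chi\psi\in\Dom(H_R)$ directly via the first representation theorem (product rule and integration by parts give $a_R(\chi\psi,v)=\langle\tilde w,v\rangle$ for an explicit $\tilde w\in L^2$). It then extends the coefficients of $H_R$ to all of $\RR^d$, observes that $\chi\psi$ (extended by zero) lies in the domain of this whole-space operator $\widehat H_R$, and appeals to the fact---established at the outset of Section~\ref{sec:results}---that $C_{\mathrm c}^\infty(\RR^d)$ is an operator core for $\widehat H_R$. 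This yields the approximating sequence needed to re-run the proof of Theorem~\ref{thm:three_annuli}.

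Your approach is more direct and avoids both the form manipulation and the explicit extension of the operator to $\RR^d$, at the price of invoking interior elliptic regularity (standard for Lipschitz leading coefficients, but a non-trivial outside input). The paper's approach keeps everything within the form-theoretic framework already set up and re-uses the core property proved earlier, making the argument self-contained relative to the rest of the paper. Both strategies rely on the same geometric observation that $\ball{R_3}{x}$ stays well inside $\Lambda_R$, and both conclude by re-running the proof of Theorem~\ref{thm:three_annuli} verbatim, so the constants $D_1,D_2,D_3,\alpha^*$ are indeed identical.
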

	
	%

\begin{proof}
Note that $R_3= (33 \euler d \Elliptic^{11/2} (\Lipschitz + 1))^{-1}< 1/89$.
Hence, $\ball{R_3}{x} \subset \Lambda_{3L/2}$ for all $x  \in \Lambda_{L}$.

For $\psi \in \Dom (H_R)$ and $\chi\in C_0^\infty(\Lambda_R)$, $\1_{\Lambda_{3L/2}}\leq \chi\leq \1_{\Lambda_{2L}}$ we claim that
\[
\chi \psi \in\Dom (H_R) 
\]
Indeed, since $\psi \in \Dom ( H_R ) \subset \Dom (a_R)$ there exists $w \in L^2 (\Lambda_{R})$ such that $ H_R \psi = w$. By the first representation theorem, in particular \cite[Theorem VI.2.1 part (i)]{Kato-80}, we have for all $v \in \Dom (a_R)$ that $a_R (\psi, v) = \langle w , v \rangle$. 
Since $\chi \psi \in \Dom (a_R)$ we obtain, using the product rule and integration by parts,
\[
  a_R (\chi \psi , v) = \langle \tilde w , v \rangle
\]
for all $v \in \Dom (a_R)$, where $\tilde w \in L^2 (\Lambda_{R})$ is given by
\[
\tilde w = \chi w + (b^\T \nabla \chi)  \psi - \nabla \chi^\T A \nabla  \psi - \diver (\psi A \nabla \chi)  .
\]
Again the first representation theorem, see \cite[Theorem VI.2.1 part (iii)]{Kato-80}, implies that $\chi \psi \in \Dom (H_R)$ (and $ H_R(\chi \psi) = \tilde w$). 
This proves the claim.
\smallskip

Let  $\widehat H_R$ be an extension of $H_R$ to $L^2(\RR^d)$ with coefficient functions of the type considered at the beginning of Section \ref{sec:results},
(i.e.~uniformly Lipschitz-continuous, uniformly elliptic, and symmetric  $A \colon \RR^d \to \RR^{d \times d}$, $b \in L^\infty (\RR^d ; \CC^d)$ and $c \in L^\infty (\RR^d)$),
coinciding on $\Lambda_R$ with those of $H_R$,  but  arbitrary on $\RR^d\setminus \Lambda_R$.
If we extend $\chi \psi$ by zero outside $\Lambda_{R}$ and consider it as an element of $L^2 (\RR^d)$ we find, using that our operators are local, $ \chi \psi \in\Dom (\widehat H_R)$.
\medskip

Since $C_0^\infty(\RR^d)$  is an operator core for $\hat H_R$
there exist a sequence $\psi_n \in C_0^\infty(\RR^d)$ with
\begin{itemize}
  \item $\psi_n \to \chi\psi \text{ in }  L^2(\RR^d)$
  \item $\hat H_R\psi_n \to \hat H_R(\chi\psi) \text{ in }  L^2(\RR^d)$
  \item $\supp \psi_n \subset \Lambda_{5/2L}$
\end{itemize}
Now the statement of the Corollary follows with the same arguments as in the proof
of Theorem \ref{thm:three_annuli}.
At the end of the proof one has to use that  $\widehat H_R(\chi \psi)$ equals $H_R( \psi)$ almost everywhere on $\ball{R_3}{x}$ since the operators are local.
\end{proof}
\bigskip
%
%
%
%
%
%

\section{Intermezzo: Short proof for the case of small Lipschitz constant coefficients}
\label{sec:intermezzo}
In this section we show how the three annuli inequality from Theorem~\ref{thm:three_annuli}
yields directly a proof of the sampling and equidistribution Theorems~\ref{thm:sampling}
\& \ref{thm:equidistribution}
in the special case where the second order part is the pure Laplacian.
This way we recover in particular Theorem 2.1 in \cite{RojasMolinaV-13}
and Theorem 1 in \cite{TautenhahnV-16}.
Let us note that our new proof is much shorter and simpler than the earlier ones from \cite{RojasMolinaV-13,TautenhahnV-16}.
\par
Thereafter we explain how this method extends to elliptic second order terms with sufficiently small Lipschitz constants, in particular to constant coefficients.
This way one can recover the results from \cite{BorisovTV-17} with a simplified proof compared to the original one in  \cite{BorisovTV-17}, which was based on the method of \cite{RojasMolinaV-13}.
We also discuss why this direct approach fails for second order terms with arbitrary Lipschitz coefficients.
\begin{theorem} \label{thm:Laplacian}
 Assume that $L\in \NN_\infty$,  and $a^{ij} (x) = \delta_{ij}$ for all $i,j \in \{1,\ldots , d\}$ and $x \in \Lambda_L$.
  There is a constant $N = N (d)$, such that for all measurable and bounded $V_L : \Lambda_L \to \RR$, all $\psi \in \Dom (H_L)$ satisfying $\lvert H_L \psi \rvert \leq \lvert V_L\psi \rvert$ almost everywhere on $\Lambda_L$, all $\delta \in (0,1/2)$ and all $(1,\delta)$-equi\-distri\-buted sequences $Z$ we have
\[
 \lVert \psi \rVert_{\equi_{\delta,Z} \cap \Lambda_L}^2 \geq C_{\sfUC} \lVert \psi \rVert_{\Lambda_L}^2,
\]
where
\begin{equation*} 
 C_{\sfUC} = \delta^{N \bigl( 1+  \lVert V_L \rVert_\infty^{2/3} + \lVert b_L \rVert_\infty^{2} + \lVert c_L \rVert_\infty^{2/3} \bigr)} .
\end{equation*}
\end{theorem}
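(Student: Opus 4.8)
The plan is to deduce Theorem~\ref{thm:Laplacian} directly from the three annuli inequality of Theorem~\ref{thm:three_annuli} (when $L=\infty$) and its cube version Corollary~\ref{thm:three_annuli_cubes} (when $L<\infty$), specialised to $\Elliptic=1$, $\Lipschitz=0$, $\zeta=0$. Write $\mathcal{M}:=1+\lVert V_L\rVert_\infty^{2/3}+\lVert b_L\rVert_\infty^{2}+\lVert c_L\rVert_\infty^{2/3}$, so that the target is $C_{\sfUC}=\delta^{N\mathcal{M}}$. Two bookkeeping facts are used repeatedly. First, since $\delta\in(0,1/2)$ one has $\ln(1/\delta)>\ln 2$, so any factor of the form $\euler^{c\mathcal{M}}$, $c^{c\mathcal{M}}$, or $\delta^{-c}$ with $c=c(d)$ is dominated by $\delta^{-N'\mathcal{M}}$ for a suitable $N'=N'(d)$ (using also $\mathcal{M}\ge 1$). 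Second, from the proofs of Theorem~\ref{thm:three_annuli} and Lemma~\ref{lemma:three_annuli}: for any admissible radii one has $c_d\mathcal{M}\le\alpha^\ast\le c_d'\mathcal{M}$ and $D_1,D_2\le c_d''\,\delta^{-4}\mathcal{M}^{3}$, and moreover $D_2\le\euler^{c_d'''\mathcal{M}}$ as soon as the \emph{outer} radii $r_3<R_3$ are dimensional constants (the factors $(R_3-r_3)^{-2}$ etc.\ in Lemma~\ref{lemma:three_annuli} then being harmless). The lower bound $\alpha^\ast\ge c_d\mathcal{M}$ is what lets a decaying coefficient beat the growth in $\mathcal{M}$.

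\emph{The case $L=\infty$.} Fix $j\in\ZZ^d$ and apply Theorem~\ref{thm:three_annuli} to $\psi$, $V=V_L$, $\zeta=0$, with centre $z_j$ and radii $r_1=\delta/2$, $R_1=r_2=\delta$, $R_2=2\sqrt{d}$, $r_3=2\sqrt{d}\,\euler\,4^{K_0}$, $R_3=r_3+1$, where $K_0=K_0(d)$ is a large dimensional constant to be fixed; here $\mu_1=\euler$ because $\Elliptic=1$, $\Lipschitz=0$, $\epsilon=1$. Using $Z_1(z_j)\subset\ball{\delta}{z_j}$, $Z_3(z_j)\subset\ball{R_3}{z_j}$ and dividing by $\alpha^{3}\ge1$, this gives for all $\alpha\ge\alpha^\ast$
\[
\lVert\psi\rVert_{Z_2(z_j)}^{2}\le D_1\Bigl(\tfrac{4\sqrt{d}\,\euler}{\delta}\Bigr)^{2\alpha}\lVert\psi\rVert_{\ball{\delta}{z_j}}^{2}+D_2\,4^{-2K_0\alpha}\lVert\psi\rVert_{\ball{R_3}{z_j}}^{2}.
\]
Since $\ball{R_2}{z_j}=\ball{\delta}{z_j}\cup Z_2(z_j)$ up to a null set, $\{\ball{R_2}{z_j}\}_{j\in\ZZ^d}$ covers $\RR^d$, and $\{\ball{R_3}{z_j}\}_{j\in\ZZ^d}$ has overlap bounded by a dimensional constant $c_d$, summing over $j$ yields, for all $\alpha\ge\alpha^\ast$,
\[
\lVert\psi\rVert_{\RR^d}^{2}\le\bigl(1+D_1(4\sqrt{d}\,\euler/\delta)^{2\alpha}\bigr)\lVert\psi\rVert_{\equi_{\delta,Z}}^{2}+c_dD_2\,4^{-2K_0\alpha}\lVert\psi\rVert_{\RR^d}^{2}.
\]
Because $D_2\le\euler^{c_d'''\mathcal{M}}$ and $\alpha^\ast\ge c_d\mathcal{M}$, one first chooses $K_0=K_0(d)$ and then $\alpha=\alpha_\sharp\le c_d^{(4)}\mathcal{M}$ large enough that $c_dD_2\,4^{-2K_0\alpha_\sharp}\le\tfrac12$; absorbing that term on the left and estimating $D_1(4\sqrt{d}\,\euler/\delta)^{2\alpha_\sharp}\le\delta^{-N_1\mathcal{M}}$ via the first bookkeeping fact gives $\lVert\psi\rVert_{\equi_{\delta,Z}}^{2}\ge\delta^{N\mathcal{M}}\lVert\psi\rVert_{\RR^d}^{2}$.

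\emph{The case $L<\infty$.} First extend $\psi$ (and $b_L$, $c_L$, $V_L$) from $\Lambda_L$ to the finite cube $\Lambda_{3L}$ by iterated reflections across the faces of $\Lambda_L$ (with the customary sign change on the first-order coefficient); for the pure Laplacian the off-diagonal second order coefficients vanish, so (Dir) holds automatically, the reflected function lies in $\Dom(H_{3L})$, and it satisfies the reflected differential inequality on $\Lambda_{3L}$, so that Corollary~\ref{thm:three_annuli_cubes} applies for all centres $z_j$ with $\ball{\delta}{z_j}\subset\Lambda_L$. Since that corollary forces $R_3\le(33\,\euler\,d)^{-1}$, the balls $\ball{R_2}{z_j}$ are only of dimensional size and do not cover $\Lambda_L$; accordingly one covers each unit cube $\Lambda_1(j)$ meeting $\Lambda_L$ by $O_d(1)$ balls of dimensional radius and joins one source ball $\ball{\delta}{z_i}\subset\Lambda_L$ to each of them by a chain of $O_d(1)$ overlapping balls of dimensional radius (all inside $\Lambda_{3L}$), iterating at each link the interpolation inequality obtained by optimising Corollary~\ref{thm:three_annuli_cubes} over $\alpha\ge\alpha^\ast$; taking the outer radii of every application dimensional keeps the resulting constants of the form $\euler^{c_d\mathcal{M}}$. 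Because the number of covering balls and the chain lengths depend only on $d$ — this is exactly what makes the bound scale-free — the composed estimate reads
\[
\lVert\psi\rVert_{\Lambda_1(j)\cap\Lambda_L}^{2}\le\euler^{c_d\mathcal{M}}\,\lVert\psi\rVert_{\ball{\delta}{z_i}}^{2\gamma}\,\lVert\psi\rVert_{\Lambda_3(j)\cap\Lambda_{3L}}^{2(1-\gamma)}
\]
for a $\gamma=\gamma(d,\delta)\in(0,1)$ with $1/\gamma\le c_d\ln(1/\delta)$; summing over $j$, applying Hölder with exponents $1/\gamma,1/(1-\gamma)$ (each $\Lambda_3(\cdot)$ counted $\le 3^d$ times, each source ball $\le O_d(1)$ times), and raising to the power $1/\gamma$ turns $\euler^{c_d\mathcal{M}/\gamma}$ into $\delta^{-N\mathcal{M}}$ and yields $\lVert\psi\rVert_{\equi_{\delta,Z}\cap\Lambda_L}^{2}\ge\delta^{N\mathcal{M}}\lVert\psi\rVert_{\Lambda_L}^{2}$.

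\emph{Main obstacle.} The difficulty is bookkeeping rather than new analysis: one must set up the geometry so that the blow-up $\delta^{-c}$ from the Carleman weight sits only in the coefficient $D_1$ of the inner term (where $\delta^{-N\mathcal{M}}$ swallows it), while $D_2$ — the coefficient one absorbs, resp.\ raises to a power $<1$ — stays free of $\delta$ by placing the corresponding annulus at a dimensional scale; and, for $L<\infty$, the covering/chaining must run at the small dimensional scale imposed by Corollary~\ref{thm:three_annuli_cubes} with chain lengths independent of $L$, the boundary cubes being reached through the reflection step. The few edge cases ($\delta$ comparable to the dimensional radii, or $\psi\equiv0$) are trivial or treated by cruder estimates.
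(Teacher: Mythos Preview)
Your $L=\infty$ argument is essentially the paper's: apply the three annuli inequality at each $z_j$, sum, use that the middle annuli cover $\RR^d$ and the outer ones have bounded overlap, and choose $\alpha$ so that the outer term is absorbed. The paper uses $r_2=1$, $R_2=3\sqrt d$, $r_3=6\euler\sqrt d$, $R_3=9\euler\sqrt d$, but your choice $r_2=R_1=\delta$, $R_2=2\sqrt d$ with the decomposition $\ball{R_2}{z_j}=\ball{\delta}{z_j}\cup Z_2(z_j)$ is an equivalent variant.

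For $L<\infty$ your route diverges substantially from the paper's, and in a way that misses the point of the intermezzo. The paper simply extends $\psi_L$ by iterated reflections to all of $\RR^d$ (Appendix~\ref{sec:extensions}), obtaining a function in the domain of a whole-space operator with the same ellipticity and Lipschitz constants, and then applies Part~(I) verbatim with the \emph{large} radii $R_2=3\sqrt d$, $R_3=9\euler\sqrt d$. No chaining is needed. You instead extend only to $\Lambda_{3L}$ and invoke Corollary~\ref{thm:three_annuli_cubes}, whose built-in constraint $R_3=(33\euler d)^{-1}$ forces radii much smaller than $\sqrt d$; this is precisely the obstruction discussed in Remark~\ref{rem:small-Lipschitz}, and you then compensate with the interpolation-plus-chaining machinery of Section~\ref{sec:chaining}. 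That machinery does work (it is what the paper uses for arbitrary Lipschitz coefficients), but for the Laplacian it is unnecessary: since $\Elliptic=1$ and $\Lipschitz=0$, the large radii of Part~(I) are available once you extend far enough, and the short proof consists exactly in exploiting this. A minor technical point: Corollary~\ref{thm:three_annuli_cubes} as stated requires the centres to lie in the inner cube $\Lambda_L$, so your chain points ``all inside $\Lambda_{3L}$'' would need either a further extension step or the chains to stay in $\Lambda_L$.
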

\begin{proof} (I) Let us first consider the case $\Lambda_L=\RR^d$, i.e.\ $L=\infty$.
Since $A$ is by assumption the identity matrix, we have $\Elliptic = 1$ and $\Lipschitz = 0$.
We choose $\epsilon = 1$,
hence $\mu=1$ and $\mu_1 =\euler$. We also choose
\begin{align*}
  r_1 &= \delta / 2,
& r_2 &=1,
&r_3 &= 6 \mathrm{e} \sqrt{d}, \\
R_1 &=\delta, &
R_2 &=3 \sqrt{d}, &
R_3 &= 9 \mathrm{e} \sqrt{d} .
\end{align*}
We apply Theorem~\ref{thm:three_annuli} and Lemma~\ref{lemma:three_annuli} with these choices of the radii to translates of the sets $Z_i$, $i \in \{1,2,3\}$, and obtain for all $\alpha \geq \alpha^*$
\[
 \sum_{j\in\ZZ^d} \lVert \psi \rVert_{Z_2 + z_j}^2
\leq
D_1\left( \frac{\mathrm{e} R_2}{r_1} \right)^{2\alpha} \sum_{j\in\ZZ^d} \lVert \psi \rVert_{Z_1 + z_j}^2
+
D_2\left( \frac{\mathrm{e} R_2}{r_3} \right)^{2\alpha}\sum_{j\in\ZZ^d} \lVert \psi \rVert_{Z_3 + z_j}^2
\]
where $z_j$, $j \in \ZZ^d$, denote the elements of the $(1,\delta)$-equidistributed sequence $Z$. From Lemma~\ref{lemma:three_annuli} we infer that
\begin{align*}
   D_1 &\leq K \delta^{-4} \left(1 + \lVert V \rVert_\infty^2 + \lVert b \rVert_\infty^2 + \lVert c \rVert_\infty^2 \right), \quad
   D_2 \leq K \left( 1+\lVert V \rVert_\infty^2 + \lVert b \rVert_\infty^2 + \lVert c \rVert_\infty^2  \right),
  \end{align*}
and
\[
   \alpha^* \leq K \left(1 + \lVert V \rVert_\infty^{2/3} + \lVert b \rVert_\infty^2 + \lVert c \rVert_\infty^{2/3} \right) ,
\]
where $K$ is a constant depending only on the dimension. A covering argument gives
\begin{equation} \label{eq:covering_Laplace}
\sum_{j\in\ZZ^d} \lVert \psi \rVert_{Z_2 + z_j}^2 \geq \lVert \psi \rVert_{\RR^d}^2, \quad
\sum_{j\in\ZZ^d} \lVert \psi \rVert_{Z_1 + z_j}^2 \leq \lVert \psi \rVert_{\equi_{\delta,Z}}^2 , \quad \text{and} \quad
\sum_{j\in\ZZ^d} \lVert \psi \rVert_{Z_3 + z_j}^2 \leq K_d \lVert \psi \rVert_{\RR^d}^2 ,
\end{equation}
where $K_d=(18 \euler \sqrt{d}+1)^d$ is a combinatorial factor depending only on the dimension. Hence,
\[
 \lVert \psi \rVert_{\RR^d}^2
\leq
D_1 \left( \frac{\euler R_2}{r_1} \right)^{2\alpha} \lVert \psi \rVert_{\equi_{\delta,Z}}^2
+
D_2 \left( \frac{\euler R_2}{r_3} \right)^{2\alpha}K_d \lVert \psi \rVert_{\RR^d}^2 .
\]
This is equivalent to
\begin{equation}\label{eq:0}
 \left(1 - K_dD_2  \left(\frac{\euler R_2}{r_3} \right)^\alpha\right)\lVert \psi \rVert_{\RR^d}^2
\leq
D_1 \left( \frac{\euler R_2}{r_1} \right)^{2\alpha} \lVert \psi \rVert_{\equi_{\delta,Z}}^2 .
\end{equation}
Since $(\euler R_2 / r_3) = 1/2 < 1$ we can, additionally to $\alpha \geq \alpha^*$,  choose $\alpha$ sufficiently large, i.e.
\[
 \alpha \geq \frac{\ln (2 K_d D_2)}{\ln 4} =: \alpha^{**},
\]
such that the prefactor on the left hand side of Ineq.~\eqref{eq:0} is bounded from below by $1/2$. Hence, we obtain for $\alpha_0 := \max\{\alpha^* , \alpha^{**}\}$
\[
\lVert \psi \rVert_{\RR^d}^2
\leq
2D_1 \left( \frac{\euler R_2}{r_1} \right)^{2\alpha_0} \lVert \psi \rVert_{\equi_{\delta,Z}}^2 .
\]
We denote by $K_i$, $i\in \NN$ constants depending only on the dimension (which may change from line to line) and calculate for the final constant
\begin{align*}
 2D_1 \left( \frac{\euler R_2}{r_1} \right)^{2\alpha_0}
 & \leq  K_1 \delta^{-4} \left(1 + \lVert V \rVert_\infty^2 + \lVert b \rVert_\infty^2 + \lVert c \rVert_\infty^2 \right)
 \left( \frac{\delta}{6 \mathrm{e} \sqrt{d}} \right)^{-2(\alpha^* + \alpha^{**})} \\
 &\leq K_1 \left(1 + \lVert V \rVert_\infty^2 + \lVert b \rVert_\infty^2 + \lVert c \rVert_\infty^2 \right)
 \left( \frac{\delta}{6 \mathrm{e} \sqrt{d}} \right)^{-2(\alpha^* + \alpha^{**}) - 4} \\
 &\leq
 \left( \frac{\delta}{6 \mathrm{e} \sqrt{d}} \right)^{-2(\alpha^* + \alpha^{**}) - 4 - \ln \left(1 + \lVert V \rVert_\infty^2 + \lVert b \rVert_\infty^2 + \lVert c \rVert_\infty^2 \right) - \ln K_1} .
\end{align*}
For the exponent we have using $\ln (1+x) \leq 3x^{1/3}$ and $x^{2/3} \leq 1+x^2$
\[
 2(\alpha^* + \alpha^{**}) + 4 + \ln \left(1 + \lVert V \rVert_\infty^2 + \lVert b \rVert_\infty^2 + \lVert c \rVert_\infty^2 \right) + K_1
 \leq K_2 \left(1 + \lVert V \rVert_\infty^{2/3} + \lVert b \rVert_\infty^2 + \lVert c \rVert_\infty^{2/3} \right) .
\]
Hence, with $K_3=(1+2 \ln (6 \euler \sqrt{d}))K_2$ we have
\begin{align*}
 2D_1 \left( \frac{\euler R_2}{r_1} \right)^{2\alpha}
 &\leq
 \left( \frac{\delta}{6 \mathrm{e} \sqrt{d}} \right)^{-K_2 \left(1 + \lVert V \rVert_\infty^{2/3} + \lVert b \rVert_\infty^2 + \lVert c \rVert_\infty^{2/3} \right)}
 \leq \delta^{-K_3 \left(1 + \lVert V \rVert_\infty^{2/3} + \lVert b \rVert_\infty^2 + \lVert c \rVert_\infty^{2/3} \right)} .
\end{align*}
\par
(II) If $L\in \NN$, i.e.\ $\Lambda_L$ is a finite cube
the first step of the proof consists in extending the original problem on
$\Lambda_L$ to the whole of $\RR^d$ using the extension which is constructed in Appendix~\ref{sec:extensions}. To the resulting problem one can then apply the arguments
of part (I) of the proof.
This is analogous to the proof of Theorem \ref{thm:equidistribution}, to which we refer for details.
\end{proof}
\begin{remark}\label{rem:small-Lipschitz}
 Crucial for the proof of Theorem~\ref{thm:Laplacian} are
 \begin{enumerate}[(i)]
  \item the first covering bound in Ineq.~\eqref{eq:covering_Laplace}, and
  \item the fact that $ K_d D_2  (\mathrm{e} R_2 / r_3 )^{2\alpha} < 1$, in order that the left hand side of Ineq.~\eqref{eq:0} can be bounded from below.
 \end{enumerate}
Since $ K_d D_2  >1$ the only way to ensure (ii) is to guarantee that $\Elliptic R_2 \mu_1 / r_3 = \euler R_2 / r_3 < 1$,
and then choose $\alpha$ large enough.
\par
In the case of the pure Laplacian, (i) and (ii) are true due to the proper choice of $r_i$, $R_i$, and $\epsilon$
(actually:  $R_2=3 \sqrt{d}$ and $r_3=6 \mathrm{e}\sqrt{d}$, and $\Elliptic=1$ and $\mu=\epsilon=1$, so that $\mu_1=\mathrm{e}$).
If one attempts to apply this proof to variable second order coefficients, then it is in general not possible to verify (i) and (ii) simultaneously.
\par
On the one hand, in the general case one has to pick the radii $R_3 $, $R_2$ and $r_3$ as
functions of $d$, $\Elliptic$, and $\Lipschitz$, in order to satisfy (ii).
If all three radii  are proportional to  a sufficiently negative power of $\Elliptic(\Lipschitz+1)$,
then indeed (ii) can be achieved.%
\footnote{In fact, this choice will be what happens in the first step in the proof of Theorem \ref{thm:chaining}:
To ensure (ii) we choose the radii for instance as in Lemma~\ref{lemma:interpolation}.}
Note that this forces $R_2$ to be small (depending on $\Elliptic$ and $\Lipschitz$). However, once $R_2<\sqrt{d}$, the union of the annuli $Z_2+z_j$ will no longer cover all of $\RR^d$,
thus we cannot have (i). On the other hand, if one chooses the radii such that (i) holds, then $\Elliptic R_2 \mu_1 / r_3$ is smaller than one only if $\mu_1$ is sufficiently small.
The latter can be achieved by choosing $\Lipschitz$ and $\epsilon$ sufficiently small as a function of $\Elliptic$.
\par
This is why the above proof for the Laplacian
can be extended to second order terms with slowly varying coefficient functions
but \emph{not} for divergence form operators with \emph{arbitrary} coefficient functions
as considered in this paper.
\end{remark}
%
%
%
%
\section{Chaining argument and the proof of Theorems \ref{thm:sampling} and \ref{thm:equidistribution}}
\label{sec:chaining}
We discussed in Remark~\ref{rem:small-Lipschitz} in the last section,
why for arbitrary Lipschitz constants a sampling or equidistribution theorem does not directly follow from the three annuli inequality.
This is also the reason why the results in \cite{BorisovTV-17} were limited to slowly varying coefficient functions.

In this section we present a method how to overcome this limitation.
First we deduce an adapted \emph{interpolation inequality} from the three annuli inequality.
Then we apply a so-called \emph{chaining argument} similar to the one in \cite{Bakri-13},\footnote{Also, due to the inhomogeneity $\zeta$ our chaining argument needs a careful balancing of the terms involving $\psi$ and $\zeta$.}
in order to obtain a different covering bound replacing the one in Ineq.~\eqref{eq:covering_Laplace}.
In our situation the chaining is performed simultaneously in all periodicity cells.

Recall the conventions
$\NN_{\infty}:=\NN\cup\{\infty\}$, $\Lambda_\infty:= \RR^d$, and $H_\infty := H$.

\begin{theorem}[Interpolation inequality] \label{thm:interpolation}
Let $R \in \NN_{\infty}$,
$ \epsilon > 0$,  $0 < r_1 < R_1 \leq r_2 < R_2 \leq r_3 < R_3$ such that
\begin{equation}\label{ass:radii}
\mu_1 = \mu_1(R_3,\epsilon) < \frac{r_3}{R_2 \Elliptic}
\quad\text{and}\quad
\frac{(\mu_1 R_2 \Elliptic)^2}{r_1 r_3} \geq 1
\end{equation}
and $\Omega_*\subset \Lambda_R$ be open.

Then for all measurable and bounded $V \colon \Lambda_R \to \RR$,
all $\psi \in \Dom (H_R)$
and $\zeta \in L^2 (\Lambda_R)$ satisfying $\lvert H_R \psi \rvert \leq \lvert V\psi \rvert + \lvert \zeta \rvert$
almost everywhere on $\Omega_*$,
all $ \cJ \subset \ZZ^d$,
all sequences $(x_j)_{j \in  \cJ  } \subset \RR^d$ satisfying
\[
\forall j \in  \cJ   \colon \quad x_j \in \Lambda_{1+2a}(j) \text{ and }
 \ball{R_3}{x_j} \subset \Omega_*, \quad \text{ where } a= (R_2+3r_2)/4,
\]
all $t \geq 0$, {and all $\D_3 \geq D_3$} we have
\begin{multline}\label{eq:3annuli0}
\sum_{j \in  \cJ  } \lVert \psi \rVert_{Z_1 (x_j)}^2 +  \left( \frac{t}{2 \tilde D_2}+1 \right) \frac{\D_3 M}{\tilde D_1} \lVert \zeta \rVert_{ \Omega_*  }^2 \\
\geq
C_1(\gamma)^{-1/\gamma} \biggl( M \lVert \psi \rVert_{ \Omega_*  }^2 + \frac{t \D_3 M}{2}  \lVert \zeta \rVert^2_{ \Omega_*  } \biggr)^{1-1/\gamma}
\biggl( \sum_{j \in  \cJ  } \lVert \psi \rVert_{Z_2 (x_j)}^2 + t \D_3 M \lVert \zeta \rVert_{ \Omega_*  }^2 \biggr)^{1/\gamma} .
\end{multline}
Here,
\[
 C_1 (\gamma) = 2 \left(\frac{\tilde D_1}{\tilde D_2} \right)^\gamma \max \left\{ \tilde D_2  ,    \left( \frac{r_3}{r_1} \right)^{2\gamma \alpha^*} \right  \} > 0 ,
 \quad
 \gamma
 = \frac{\ln (r_3 / (R_2 \mu_1 \Elliptic))}{\ln (r_3 / r_1)} \in (0,1),
\]
$M = (2R_3 + 2a+ 1)^d$, $\tilde D_1 = \max \{ 1, D_1 \}$, $\tilde D_2 = \max \{ 1, D_2 \}$, and $D_1$, $D_2$, $D_3$, $\alpha^*$ and $\mu_1$ are as in Theorem~\ref{thm:three_annuli}.
\end{theorem}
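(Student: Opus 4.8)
\emph{The plan} is to derive the interpolation inequality \eqref{eq:3annuli0} from the three annuli inequality of Section~\ref{sec:three-annuli} in three moves: localize-and-sum the three annuli estimate over the lattice cells, insert the free parameter $t$ by a balancing (AM--GM) step that carries the inhomogeneity $\zeta$ through, and finally optimize the resulting one-parameter family of estimates in the Carleman parameter $\alpha$. For the first move, for each $j\in\cJ$ the assumptions $\ball{R_3}{x_j}\subset\Omega_*$ and $\lvert H_R\psi\rvert\le\lvert V\psi\rvert+\lvert\zeta\rvert$ a.e.\ on $\Omega_*$ let me apply the three annuli inequality --- Theorem~\ref{thm:three_annuli} if $R=\infty$, and Corollary~\ref{thm:three_annuli_cubes} if $R<\infty$ --- to the translate centered at $x_j$, for every $\alpha\ge\alpha^*$. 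Summing over $j\in\cJ$ and abbreviating $p:=R_2\mu_1\Elliptic/r_1$, $q:=R_2\mu_1\Elliptic/r_3$, $A:=\sum_{j}\lVert\psi\rVert_{Z_1(x_j)}^2$, $B:=\sum_{j}\lVert\psi\rVert_{Z_2(x_j)}^2$, I use that $x_j\in\Lambda_{1+2a}(j)$ with $\cJ\subset\ZZ^d$ forces the balls $\ball{R_3}{x_j}$ to have overlap multiplicity at most $M=(2R_3+2a+1)^d$, while $Z_1(x_j)\cup Z_2(x_j)\cup Z_3(x_j)\subset\ball{R_3}{x_j}\subset\Omega_*$; this gives $\sum_j\lVert\psi\rVert_{Z_3(x_j)}^2\le M\lVert\psi\rVert_{\Omega_*}^2=:S$, $B\le S$, and $\sum_j\lVert\zeta\rVert_{\ball{R_3}{x_j}}^2\le M\lVert\zeta\rVert_{\Omega_*}^2$. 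With $D_1\le\tilde D_1$, $D_2\le\tilde D_2$, $D_3\le\D_3$, and $Z':=\D_3 M\lVert\zeta\rVert_{\Omega_*}^2$ the summed estimate reads, for all $\alpha\ge\alpha^*$,
\[
 \alpha^3 B \ \le\ p^{2\alpha}\bigl(\tilde D_1 A + Z'\bigr) + \tilde D_2\,q^{2\alpha} S .
\]

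For the second move, note that the two inequalities in \eqref{ass:radii} give $q<1$, $p>1$ and $pq=(R_2\mu_1\Elliptic)^2/(r_1 r_3)\ge 1$. Hence, by AM--GM, $tZ'\le t(pq)^{\alpha}Z'\le\frac{t}{2\tilde D_2}p^{2\alpha}Z'+\frac{t\tilde D_2}{2}q^{2\alpha}Z'$ for every $\alpha>0$. Adding this to the display above, regrouping the $p^{2\alpha}$- and $q^{2\alpha}$-parts, and using $\alpha^3\ge1$ (legitimate since $\alpha\ge\alpha^*\ge1$), I would obtain, with $\mathcal{A}:=A+\bigl(\tfrac{t}{2\tilde D_2}+1\bigr)\tfrac{Z'}{\tilde D_1}$ and $\mathcal{S}:=S+\tfrac{t}{2}Z'$, the one-parameter family
\[
 B + tZ' \ \le\ \tilde D_1\,p^{2\alpha}\mathcal{A} + \tilde D_2\,q^{2\alpha}\mathcal{S} \qquad (\alpha\ge\alpha^*).
\]
Here $\mathcal{A}$, $\mathcal{S}$ and $B+tZ'$ are exactly the three bracketed quantities appearing in \eqref{eq:3annuli0}.

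For the third move, set $\gamma=\ln(r_3/(R_2\mu_1\Elliptic))/\ln(r_3/r_1)=\ln(1/q)/\ln(p/q)\in(0,1)$, so that $q=(r_3/r_1)^{-\gamma}$ and $p/q=r_3/r_1$, and let $\alpha_0=\frac{1}{2\ln(r_3/r_1)}\ln\frac{\tilde D_2\mathcal{S}}{\tilde D_1\mathcal{A}}$ be the value at which the two right-hand terms balance. If $\alpha_0\ge\alpha^*$, I take $\alpha=\alpha_0$: the identity $p^{2\alpha_0}=(\tilde D_2\mathcal{S}/(\tilde D_1\mathcal{A}))^{1-\gamma}$ makes the two terms equal and yields $B+tZ'\le 2\tilde D_1^{\gamma}\tilde D_2^{1-\gamma}\mathcal{S}^{1-\gamma}\mathcal{A}^{\gamma}$. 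If $\alpha_0<\alpha^*$, equivalently $\mathcal{S}\le(\tilde D_1/\tilde D_2)(r_3/r_1)^{2\alpha^*}\mathcal{A}$, I instead use the crude bound $B+tZ'\le2\mathcal{S}$ together with $\mathcal{S}^{\gamma}\le(\tilde D_1/\tilde D_2)^{\gamma}(r_3/r_1)^{2\gamma\alpha^*}\mathcal{A}^{\gamma}$ to get $B+tZ'\le2(\tilde D_1/\tilde D_2)^{\gamma}(r_3/r_1)^{2\gamma\alpha^*}\mathcal{S}^{1-\gamma}\mathcal{A}^{\gamma}$. In both cases $B+tZ'\le C_1(\gamma)\,\mathcal{S}^{1-\gamma}\mathcal{A}^{\gamma}$ with the stated $C_1(\gamma)=2(\tilde D_1/\tilde D_2)^{\gamma}\max\{\tilde D_2,(r_3/r_1)^{2\gamma\alpha^*}\}$; raising this to the power $1/\gamma$ and rearranging produces $\mathcal{A}\ge C_1(\gamma)^{-1/\gamma}\mathcal{S}^{1-1/\gamma}(B+tZ')^{1/\gamma}$, which is precisely \eqref{eq:3annuli0}.

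The technically delicate point is the second move: in the three annuli inequality the $\zeta$-term carries the \emph{same} amplifying factor $p^{2\alpha}>1$ as the $Z_1$-term, so it cannot be discarded and must be split against the small $q^{2\alpha}$-side in such a way that a clean term $tZ'$ survives on the left while only $\tfrac{t}{2}Z'$ is absorbed into the outer quantity $\mathcal{S}$ --- this is exactly where the second condition $pq\ge1$ of \eqref{ass:radii} is indispensable, and it is what makes the chaining argument of Section~\ref{sec:chaining} able to re-balance the terms at each step. A lesser nuisance is the degenerate regime $\alpha_0<\alpha^*$, which is what forces the ``$\max$'' in the definition of $C_1(\gamma)$, together with the bookkeeping check that the single combinatorial constant $M$ simultaneously controls the three cell sums (it does, since $Z_1(x_j),Z_2(x_j),Z_3(x_j)$ all lie inside $\ball{R_3}{x_j}\subset\Omega_*$).
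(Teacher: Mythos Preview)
Your proof is correct and follows essentially the same approach as the paper's own argument: sum the three annuli inequality over $j\in\cJ$, use the overlap bound $M$, insert the $\zeta$-term via the AM--GM splitting $tZ'\le (pq)^\alpha tZ'\le \tfrac{t}{2\tilde D_2}p^{2\alpha}Z'+\tfrac{t\tilde D_2}{2}q^{2\alpha}Z'$ (the paper keeps a free parameter $s$ and only later sets $s=1/\tilde D_2$), and then optimize in $\alpha$ with the same two-case analysis depending on whether the balancing $\alpha_0$ exceeds $\alpha^*$. Your notation $\mathcal{A}=A_1/\tilde D_1$, $\mathcal{S}=A_2/\tilde D_2$ is a convenient repackaging of the paper's $A_1$, $A_2$, and the crude bound $B+tZ'\le 2\mathcal{S}$ in the second case is exactly the paper's $L\le (2/\tilde D_2)A_2$.
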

A particular important case is $\cJ =\ZZ^d$ and $\Omega_* = \Lambda_R=\RR^d$.

Note that condition \eqref{ass:radii} is equivalent to $\sqrt{r_1 r_3} \leq \mu_1 R_2 \Elliptic \leq r_3$.
\begin{lemma}\label{lemma:interpolation}
 Let $\epsilon = 1$, and set
 \begin{align*}
 r_1&= R_1 / 2   & r_2&= R_2 / 5 & r_3 &= \frac{R_3}{\Elliptic+1} \\
 R_1& \leq r_2   & R_2&=\frac{R_3}{2 \euler (\Elliptic+1)^{5/2}} & R_3 &= (33 \euler d \Elliptic^{11/2} (\Lipschitz + 1))^{-1} .
\end{align*}
Then Assumption~\eqref{ass:radii} is satisfied,
\[
 C_1(\gamma)^{1/\gamma} \leq  R_1^{-K (1 + \lVert V \rVert_\infty^{2/3} + \lVert b \rVert_\infty^{2} + \lVert c \rVert_\infty^{2/3})},
 \quad\text{and}\quad
 \frac{D_3}{\tilde D_1} \leq K R_1^2 ,
\]
where $K\geq 1$ is constant only depending on $d,\Elliptic,\Lipschitz$.
\end{lemma}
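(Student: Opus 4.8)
The plan is to substitute the stated radii into the quantities of Theorems~\ref{thm:three_annuli} and \ref{thm:interpolation} and of Lemma~\ref{lemma:three_annuli}, and to verify the three assertions separately. The book-keeping rests on two observations. First, apart from $R_1$ every radius ($R_2$, $R_3$, $r_2 = R_2/5$, $r_3 = R_3/(\Elliptic+1)$) and every derived quantity not involving $R_1$ (e.g.\ $R_3 - r_3 = R_3\Elliptic/(\Elliptic+1)$) is a fixed function of $d,\Elliptic,\Lipschitz$; moreover $\mu = 33 d R_3 \Elliptic^{11/2}\Lipschitz + 1 = 1 + \Lipschitz/(\euler(\Lipschitz+1)) \in [1,1+\euler^{-1})$, so that $\euler \le \mu_1 \le (\euler+1)\sqrt{\Elliptic}$. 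Second, with $r_1 = R_1/2$ one has $R_1 - r_1 = R_1/2$ and $r_1' := r_1 + (R_1-r_1)/4 = 5R_1/8$, and since $R_1 \le r_2 < R_3 < 1$, negative powers of $R_1$ dominate positive ones. Below $K \ge 1$ denotes a constant depending only on $d,\Elliptic,\Lipschitz$ that may change from line to line, and I abbreviate $P = 1 + \lVert V\rVert_\infty^{2/3} + \lVert b\rVert_\infty^2 + \lVert c\rVert_\infty^{2/3}$, $Q = 1 + \lVert V\rVert_\infty^2 + \lVert b\rVert_\infty^2 + \lVert c\rVert_\infty^2$.

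First I would check Assumption~\eqref{ass:radii}, which by the remark following Theorem~\ref{thm:interpolation} is equivalent to $\sqrt{r_1 r_3} \le \mu_1 R_2\Elliptic \le r_3$. Since $r_3/(R_2\Elliptic) = 2\euler(\Elliptic+1)^{3/2}/\Elliptic$, the upper inequality follows from $\mu_1 \le (\euler+1)\sqrt{\Elliptic}$ together with $(\euler+1)\Elliptic^{3/2} < 2\euler(\Elliptic+1)^{3/2}$ (valid because $\euler+1 < 2\euler$ and $\Elliptic < \Elliptic+1$). Using $r_1 \le r_2/2 = R_2/10$, $r_3 = 2\euler(\Elliptic+1)^{3/2}R_2$ and $\mu_1 \ge \euler$, the lower inequality reduces to $(\Elliptic+1)^{3/4} \le \sqrt{5\euler}\,\Elliptic$, which holds for $\Elliptic \ge 1$ since $(\Elliptic+1)^{3/4} \le (2\Elliptic)^{3/4} \le 2^{3/4}\Elliptic \le \sqrt{5\euler}\,\Elliptic$. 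The chain $0 < r_1 < R_1 \le r_2 < R_2 \le r_3 < R_3$ is then immediate, and $\gamma \in (0,1)$ follows since $r_1 < R_2\mu_1\Elliptic < r_3$.

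Next I would prove $D_3/\tilde D_1 \le K R_1^2$. Since $\tilde D_1 = \max\{1,D_1\} \ge D_1$, it is enough to bound $D_3/D_1$, and for this I would use the closed-form expressions for $D_1$ and $D_3$ obtained at the end of the proof of Theorem~\ref{thm:three_annuli}: the common prefactor $\tfrac{24}{5}R_3 C\Elliptic^{-1/2}\mu_1^{-2}r_1'^2 R_2$ cancels, and discarding all but the term $12\Elliptic^2 d^2/r_1'^2$ of the bracket multiplying $\Theta_1^2$ in $D_1$ gives
\[
\frac{D_3}{D_1} \le \frac{r_1'^2}{12\Elliptic^2 d^2}\left(8\Elliptic + \frac{8\Elliptic\Theta_3^2 + 2}{\Theta_1^2}\right).
\]
As $\Theta_1 = 4\tilde\Theta/R_1^2 \ge 4$ (here $\tilde\Theta \ge 1$ and $R_1 < 1$), the parenthesis is $\le K$, and $r_1'^2 = 25R_1^2/64$, whence $D_3/\tilde D_1 \le D_3/D_1 \le K R_1^2$.

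Finally, for $C_1(\gamma)^{1/\gamma}$ I would use $C_1(\gamma)^{1/\gamma} = 2^{1/\gamma}(\tilde D_1/\tilde D_2)\max\{\tilde D_2^{1/\gamma},(r_3/r_1)^{2\alpha^*}\} \le 2^{1/\gamma}\tilde D_1\max\{\tilde D_2^{1/\gamma},(r_3/r_1)^{2\alpha^*}\}$ (since $\tilde D_2 \ge 1$), hence
\[
\ln C_1(\gamma)^{1/\gamma} \le \frac{\ln 2}{\gamma} + \ln\tilde D_1 + \max\left\{\frac{\ln\tilde D_2}{\gamma},\ 2\alpha^*\ln\frac{r_3}{r_1}\right\}.
\]
Writing $\gamma = c_1/\ln(c_2/R_1)$ with the constants $c_1 = \ln(r_3/(R_2\mu_1\Elliptic)) > 0$ and $c_2 = 2R_3/(\Elliptic+1) < 1$, and noting $R_1 \le r_2 < 1$ forces $\ln(1/R_1) \ge 1$, one gets $1/\gamma \le c_1^{-1}\ln(1/R_1)$ and $\ln(r_3/r_1) = \ln(c_2/R_1) \le \ln(1/R_1)$. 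Lemma~\ref{lemma:three_annuli}, evaluated with $R_1 - r_1 = R_1/2$, $R_1^2/64 < 1$ and $R_2,R_3,R_3-r_3$ constant, yields $\tilde D_1 \le K R_1^{-4}Q$, $\tilde D_2 \le K Q$ and $\alpha^* \le K P$. The remaining point is to pass from $Q$ to $P$: by $\ln(1+x) \le 3x^{1/3}$, subadditivity of $x \mapsto x^{1/3}$ and $x^{2/3} \le 1 + x^2$ one has $\ln Q \le 3P$; inserting these estimates shows that each of the three summands above is $\le K P\ln(1/R_1)$, so $C_1(\gamma)^{1/\gamma} \le R_1^{-K P}$, which is the claim (after enlarging $K$ so as also to absorb the constant from the $D_3/\tilde D_1$ bound). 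The step I expect to be the crux, and the reason this estimate is delicate, is exactly that $\gamma \to 0$ as $R_1 \to 0$: the factor $1/\gamma$ is then of order $\ln(1/R_1)$ and genuinely enters the exponent, so one must keep the norm powers aligned — $D_1,D_2$ carry squared norms while $\alpha^*$ carries $2/3$-powers — through the inequality $\ln Q \le 3P$, and at the same time ensure the resulting constant is independent of $R_1$.
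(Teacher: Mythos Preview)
Your proof is correct and follows essentially the same strategy as the paper: verify \eqref{ass:radii} by direct computation of $\mu_1$ and the ratios, bound $D_3/\tilde D_1$ via the explicit formulas for $D_1,D_3$ from the proof of Theorem~\ref{thm:three_annuli}, and control $C_1(\gamma)^{1/\gamma}$ termwise using Lemma~\ref{lemma:three_annuli} together with the identity $1/\gamma = c_1^{-1}\ln(c_2/R_1)$ and the inequality $\ln Q \le 3P$. The only notable differences are cosmetic: you bound $\tilde D_1/\tilde D_2 \le \tilde D_1$ directly (avoiding the paper's detour through $D_1/D_2 + 1/D_2$, which requires a lower bound on $D_2$), and in the $D_3/\tilde D_1$ estimate you retain the term $12\Elliptic^2 d^2/r_1'^2$ from the bracket defining $D_1$ whereas the paper keeps $4\Elliptic F_{(R_1-r_1)/4}$ --- both produce the same $K R_1^{-2}$ denominator.
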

\begin{proof}[Proof of Theorem~\ref{thm:interpolation}]
 Since  $R_2> r_1 $ and by condition \eqref{ass:radii}, we have that
 \[
  a_1 :=  \frac{R_2 \mu_1 \Elliptic}{r_1}\in (1,\infty)
  \quad \text{and} \quad
  a_3 := \frac{R_2 \mu_1 \Elliptic}{r_3} \in (0,1) ,
 \]
 where $\mu_1 \geq 1$ is as in \eqref{eq:mu1}.
Applying Theorem~\ref{thm:three_annuli}, respectively Corollary \ref{thm:three_annuli_cubes}, to the translated annuli $Z_i(x_j)$ of the sets $Z_i$, $i \in \{1,2,3\}$,
we obtain for all $\alpha \geq \alpha^* \geq 1$ and all $\D_3 \geq D_3$
 \begin{equation} \label{eq:three}
  \sum_{j \in \cJ} \lVert \psi \rVert_{Z_2 (x_j)}^2
  \leq
  a_1^{2\alpha} \left(  \tilde D_1 \sum_{j \in \cJ} \lVert \psi \rVert_{Z_1 (x_j)}^2 +  \D_3 \sum_{j \in \cJ} \lVert \zeta \rVert_{\ball{R_3}{x_j}}^2 \right)
  +
  a_3^{2\alpha} \tilde D_2 \sum_{j \in \cJ} \lVert \psi \rVert_{\ball{R_3}{x_j}}^2
  .
 \end{equation}
 By assumption on the sequence $(x_j)_{j \in \cJ}$ we have
\begin{equation} \label{eq:cover}
 \sum_{j \in \cJ} \lVert \zeta \rVert_{\ball{R_3}{x_j}}^2
 \leq
 M \lVert \zeta \rVert^2_{\Omega_*} ,
 \quad \text{where} \quad
M = (2R_3 + 2a+ 1)^d
\end{equation}
and the same inequality with $\zeta$ replaced by $\psi$.
Since $2xy \leq sx^2 + s^{-1} y^2$ and $a_1 a_3 \geq 1$ by Assumption \eqref{ass:radii}, we have
\begin{equation} \label{eq:mean}
 \lVert \zeta \rVert^2_{\Omega_*}  \leq a_1^{\alpha} a_3^{\alpha} \lVert \zeta \rVert^2_{\Omega_*}
 \leq \frac{s a_1^{2\alpha}}{2}  \lVert \zeta \rVert^2_{\Omega_*} + \frac{a_3^{2\alpha}}{2s}  \lVert \zeta \rVert^2_{\Omega_*} .
\end{equation}
From \eqref{eq:three}, \eqref{eq:cover} and \eqref{eq:mean} we conclude for all $t \geq 0$ and $s > 0$
\begin{align} \label{eq:summed}
 L:= \sum_{j \in \cJ} \lVert \psi \rVert_{Z_2 (x_j)}^2 + t  \D_3 M \lVert \zeta \rVert^2_{\Omega_*}
  \leq
  a_1^{2\alpha} A_1
  +
  a_3^{2\alpha} A_2
  ,
\end{align}
where
\[
 A_1 =   \tilde D_1 \sum_{j \in \cJ} \lVert \psi \rVert_{Z_1 (x_j)}^2 + \left( \D_3 M + \frac{t  \D_3 M s}{2} \right) \lVert \zeta \rVert^2_{\Omega_*}
 \quad\text{and}\quad
 A_2 = \tilde D_2 M \lVert \psi \rVert^2_{\Omega_*} + \frac{t \D_3 M}{2s} \lVert \zeta \rVert^2_{\Omega_*} .
\]
We choose
\begin{equation} \label{eq:alphahat}
 \hat\alpha := \frac{\ln A_2 - \ln A_1}{2 \ln (a_1) - 2 \ln (a_3)}
 \quad\text{and}\quad
 s = \frac{1}{\tilde D_2} ,
\end{equation}
and we distinguish two cases. If $\hat\alpha \geq \alpha^*$, we apply Ineq.~\eqref{eq:summed} with $\alpha = \hat \alpha$ and obtain
\[
 L
 \leq 2 A_1^{\gamma} A_2^{1-\gamma},
 \quad\text{where}\quad
 \gamma := \frac{-\ln (a_3)}{\ln (a_1/a_3)} \in (0,1) .
\]
Note that $\gamma > 0$ since $a_1 > 1$ and $0< a_3 < 1$, and $\gamma < 1$ since $r_1 < R_2 \mu_1 \Elliptic$. This proves the statement if $\hat\alpha \geq \alpha^*$. If $\hat\alpha < \alpha^*$ we conclude from Eq.~\eqref{eq:alphahat}
\begin{align*}
 A_2 < \left( \frac{a_1}{a_3} \right)^{2 \alpha^*} A_1.
\end{align*}
Thus, if $\hat \alpha < \alpha^*$ we find,
using \eqref{eq:summed} and \eqref{eq:cover} with $\zeta$ replaced by $\psi$,
\[
 L \leq \frac{2}{\tilde D_2} A_2^{1-\gamma+\gamma} < \frac{2}{\tilde D_2}\left( \frac{a_1}{a_3} \right)^{2 \alpha^* \gamma} A_1^\gamma A_2^{1-\gamma} .
\]
Combining the two cases we conclude Ineq.~\eqref{eq:3annuli0}.
\end{proof}
\begin{proof}[Proof of Lemma~\ref{lemma:interpolation}]
We remark that the radii $R_3$, $R_2$, $r_3$, and $r_2$ depend only on $d$, $\Elliptic$, and $\Lipschitz$. Therefore we only emphasize the dependence with respect to $R_1$. The first inequality of Assumption~\eqref{ass:radii} is satisfied since, using $r_3 =2 \euler (\Elliptic + 1)^{3/2} R_2$, $R_3 = (33\euler d \Elliptic^{11/2} (\Lipschitz + 1))^{-1}$, $\mu = 33 d R_3 \Elliptic^{11/2} \Lipschitz + 1 = \Lipschitz / (\euler (\Lipschitz+1))+1$, and $\mu_1 = \euler \sqrt{\Elliptic} \mu$, we find
\[
 \frac{r_3}{R_2  \Elliptic \mu_1}
 = \frac{2 (\Elliptic + 1)^{3/2}}{\Elliptic^{3/2} \left( \frac{\Lipschitz}{\euler (\Lipschitz + 1)} + 1 \right)}
 \geq \frac{2 (\Elliptic + 1)^{3/2}}{2 \Elliptic^{3/2} }
 > 1 .
\]
The second inequality of Assumption~\eqref{ass:radii} follows, using $\mu \geq 1$ and $R_1 \leq r_2$, from
\[
 \frac{(R_2 \mu_1 \Elliptic)^2}{r_1 r_3}
 \geq \frac{R_2^2 \euler^2 \Elliptic^3}{r_1 r_3}
 \geq \frac{\left( \frac{R_3}{2 \euler (\Elliptic+1)^{5/2}} \right)^2 \euler^2 \Elliptic^3}{\left(\frac{R_3}{20 \euler (\Elliptic+1)^{5/2}} \right) \frac{R_3}{\Elliptic+1}}
 =
 \frac{20\euler \Elliptic^3 (\Elliptic+1)}{4 (\Elliptic+1)^{5/2}} > 1  .
\]
We denote by $K \geq 1$ and $K_i > 0$, $i \in \NN$, constants only depending on the model parameters $d$, $\Elliptic$, and $\Lipschitz$. We allow them to change with each occurrence.
In order to estimate $C_1(\gamma)^{1/\gamma}$ we recall that
\[
 C_1 (\gamma)^{1/\gamma} =  \frac{2^{1/\gamma}  \tilde D_1}{\tilde D_2}  \max \left\{ \tilde D_2^{1/\gamma}  ,    \left( \frac{r_3}{r_1} \right)^{2 \alpha^*} \right  \} ,
\]
and we write
\[
\gamma=\frac{\ln(r_3 / (R_2\mu_1\Elliptic))}{\ln(r_3 / r_1)}=\frac{K_1}{\ln(K_2 / R_1)}
\]
with $K_2 > R_1$. Throughout the calculations, we will often use $R_1\leq 1/2$ and the estimate $\ln(1+x)\leq 3x^{1/3}$ for $x \geq 0$.
From Lemma~\ref{lemma:three_annuli} we have
$D_2\leq K(1 +  \lVert V \rVert_\infty^2 + \lVert  b \rVert_\infty^2 + \lVert c \rVert_\infty^2)$. Therefore, we obtain
\begin{align*}
D_2^{1/\gamma}
&\leq\left(\frac{K_2}{R_1}\right)^{K + K \ln(1 + \lVert V \rVert_\infty^2 + \lVert  b \rVert_\infty^2 + \lVert c \rVert_\infty^2)}\\
&\leq K_2^{K(1 + \lVert V \rVert_\infty^{2/3} + \lVert  b \rVert_\infty^{2/3} + \lVert c \rVert_\infty^{2/3})}R_1^{-K(1 + \lVert V \rVert_\infty^{2/3} + \lVert  b \rVert_\infty^{2/3} + \lVert c \rVert_\infty^{2/3})} .
\end{align*}
Using $R_1 \in (0,1/2)$ and $K_2 \leq R_1^{-K}$ (for some $K \geq 1$ independent on $R_1$) we find
\[
 \tilde D_2^{1/\gamma} =  \max\{1 , D_2^{1/\gamma}\} \leq R_1^{-K(1 + \lVert V \rVert_\infty^{2/3} + \lVert  b \rVert_\infty^{2/3} + \lVert c \rVert_\infty^{2/3})} .
\]
Using the definition of $D_1,D_2$ from the proof of Theorem~\ref{thm:three_annuli} and the estimates
\begin{align*}
F_{(R_1-r_1)/4} &\leq K R_1^{-2} (1+\lVert V \rVert_\infty^2 + \lVert b \rVert_\infty^2 + \lVert c \rVert_\infty),\quad \text{and} \\
F_{(R_3-r_3)/4} &\geq K(1 + \lVert V \rVert_\infty^2 + \lVert b \rVert_\infty^2 + \lVert c \rVert_\infty),
\end{align*}
we conclude
\begin{align*}
\frac{D_1}{D_2} \leq K R_1^{-4}\leq R_1^{-K}.
\end{align*}
From the definition of $D_2$ in the proof of Theorem~\ref{thm:three_annuli} we infer that $D_2 \geq K_2$. Hence, $1/D_2 \leq R_1^{-K}$. Hence we find
\[
 \frac{\tilde D_1}{\tilde D_2} = \frac{\max\{1 , D_1\}}{\max\{1, D_2\}}
 \leq \frac{D_1}{D_2} + \frac{1}{D_2} \leq R_1^{-K} .
\]

Moreover, using $\alpha^\ast\leq \euler^{K(R_3+1)}(1+ \lVert V \rVert_\infty^{2/3} + \lVert b \rVert_\infty^{2} + \lVert c \rVert_\infty^{2/3})$, we have
\begin{align*}
\left(\frac{r_3}{r_1}\right)^{2\alpha^\ast}
&\leq
 \left(\frac{K}{R_1}\right)^{K(1+ \lVert V \rVert_\infty^{2/3} + \lVert b \rVert_\infty^{2} + \lVert c \rVert_\infty^{2/3})}
\leq R_1^{-K(1+ \lVert V \rVert_\infty^{2/3} + \lVert b \rVert_\infty^{2} + \lVert c \rVert_\infty^{2/3})}.
\end{align*}
Since $2^{1/\gamma} \leq R_1^{-K}$, we conclude using $x^{2/3} \leq 1+x^2$ for $x \geq 0$
\begin{equation*}
C_1 (\gamma)^{1/\gamma}\leq 2^{1/\gamma} \frac{\tilde D_1}{\tilde D_2} \left( \tilde D_2^{1/\gamma} + \left(\frac{r_3}{r_1}\right)^{2\alpha^\ast} \right)
\leq R_1^{-K (1+ \lVert V \rVert_\infty^{2/3} + \lVert b \rVert_\infty^{2} + \lVert c \rVert_\infty^{2/3})}.
\end{equation*}
It remains to show the upper bound on $\D_3 / \tilde D_1$. By definition we have
\[
\frac{D_3}{\tilde D_1} \leq \frac{D_3}{D_1} = \frac{8 (1 + \Theta_3^2 / \Theta_1^2) \Elliptic + 2 \Theta_1^{-2}}{3 \Elliptic^2 + 12 \Elliptic^2 d^2 / r_1'^2 + 3(\Lipschitz d^2 + \lVert b \rVert_\infty )^2   + 4{\Elliptic}  \Cac_{(R_1 - r_1)/4}} .
\]
Since $\Theta_1 \geq K$, $\Theta_3 = K$, and $F_{(R_1 - r_1)/4} \geq K R_1^{-2}$ we find $D_3 / \tilde D_1 \leq K R_1^2$.
\end{proof}
%
%
%
%

\begin{theorem}[Chaining and covering] \label{thm:chaining}
Let $R \in \NN_{\infty}$ and $\Omega_-\subset \Omega_+$ be open subsets of $\Lambda_R$.
Let $\epsilon > 0$ and $0 < r_1 < R_1 \leq r_2 < R_2 \leq r_3 < R_3$ such that $R_1 \leq (R_2 - r_2) / 4$, and  \eqref{ass:radii} is satisfied.

Then for all measurable and bounded $V \colon \Lambda_R \to \RR$,
all $\psi \in \Dom (H_R)$ and $\zeta \in L^2 (\Lambda_R)$ satisfying $\lvert H_R \psi \rvert \leq \lvert V\psi \rvert + \lvert \zeta \rvert$ almost everywhere on $\Omega_+$,
all $(1,\delta)$-equidistributed sequences $Z=(z_j)_{j\in \ZZ^d}$,
all $ \cJ   \subset \ZZ^d$,
satisfying
\[
\Omega_- \subset \bigcup_{j\in \cJ} \Lambda_1(j) \text{  and }
\forall j \in  \cJ   : \Lambda_{1+2a + 2R_3}(j) \subset  \Omega_+, \text{ where } a= (R_2+3r_2)/4,
\]
and all $\D_3 \geq D_3$ we have
\begin{multline}\label{eq:C2-LambdaR}
 C_2 (\gamma) \left( \sum_{j \in \cJ} \lVert\psi\rVert_{Z_2 (z_j)}^2 +  2 \D_3 M \lVert \zeta \rVert_{\Omega_+}^2\right)
\\
\geq
\left( \lVert \psi \rVert_{\Omega_+}^2 +  {\D_3 } \lVert \zeta \rVert_{\Omega_+}^2 \right)^{1-\gamma^{-m+1}}
\biggl( \lVert \psi \rVert_{\Omega_-}^2 + \D_3 \lVert \zeta \rVert_{\Omega_+}^2 \biggr)^{\gamma^{-m+1}}.
\end{multline}
Here
\[
C_2 (\gamma)
=
C_1(\gamma)^{\gamma^{-1}+\ldots+\gamma^{-m+1}} M^{\gamma^{1-m} - 1} N^{\gamma^{1-m}} > 0,
\]
$N = \lceil 4 \sqrt{d}/ (R_2-r_2)\rceil^d$,  $m =2 \lfloor 2\sqrt{d}/(R_2 - r_2) \rfloor + 2$ and
$C_1(\gamma)$, $\gamma \in (0,1)$, and  $M$ are as in Theorem~\ref{thm:interpolation}.

If $\cJ=\ZZ^d$ and $\Omega_-=\Omega_+=\RR^d$ the bound above simplifies to
\begin{equation}\label{eq:C2-Rd}
 C_2 (\gamma) \left( \sum_{j\in\ZZ^d } \lVert \psi \rVert_{Z_2 (z_j)}^2
 +
  2 \D_3 M  \lVert \zeta \rVert_{\RR^d}^2 \right)
 \geq
  \lVert \psi \rVert_{\RR^d}^2 + \D_3 \lVert \zeta \rVert_{\RR^d}^2 .
\end{equation}
\end{theorem}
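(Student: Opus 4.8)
The plan is to iterate the interpolation inequality of Theorem~\ref{thm:interpolation} along \emph{translational chains} of annuli — in the spirit of \cite{Bakri-13}, but carried out simultaneously in all periodicity cells — and then to compose the resulting chain of estimates into a single inequality relating the $Z_2$-annuli centred at the $(1,\delta)$-equidistributed points $z_j$ to the mass of $\psi$ on $\Omega_-$.

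\emph{Geometric construction.} The hypothesis $R_1\le (R_2-r_2)/4$ yields $2R_1<R_2-r_2$, which is exactly what makes the nesting
\[
 Z_1(y')\subset Z_2(y)\qquad\text{whenever}\qquad r_2+R_1<\lvert y-y'\rvert<R_2-R_1
\]
possible. Using it, I would build for each $j\in\cJ$ a bundle of chains of centres starting at $z_j$, all contained in $\Lambda_{1+2a}(j)$ with $a=(R_2+3r_2)/4$, so that along each chain the annulus $Z_1$ at one level sits inside the annulus $Z_2$ at the previous level, and so that the $Z_2$-annuli at the terminal level of the bundle jointly cover $\Lambda_1(j)$, hence $\bigcup_{j\in\cJ}\Lambda_1(j)\supset\Omega_-$. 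Since covering a unit cube by annuli of this fixed proportion takes on the order of $(R_2-r_2)^{-d}$ of them and each link of a chain has length comparable to $R_2-r_2$, the depth $m=2\lfloor 2\sqrt d/(R_2-r_2)\rfloor+2$ and the breadth encoded in $N=\lceil 4\sqrt d/(R_2-r_2)\rceil^d$ will suffice. The growth condition $\Lambda_{1+2a+2R_3}(j)\subset\Omega_+$ ensures $\ball{R_3}{x}\subset\Omega_+$ for every centre $x$ that occurs, so Theorem~\ref{thm:interpolation} may be applied at every level with $\Omega_*=\Omega_+$ and index set $\cJ$.

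\emph{Iteration and composition.} Put $B:=M(\lVert\psi\rVert_{\Omega_+}^2+\D_3\lVert\zeta\rVert_{\Omega_+}^2)$, which is independent of the level, and let $\Phi_k$ denote the quantity $\sum\lVert\psi\rVert_{Z_2}^2+2\D_3 M\lVert\zeta\rVert_{\Omega_+}^2$ at level $k$ (the sum over the centres of level $k$). Apply Theorem~\ref{thm:interpolation} at level $k$ with $\Omega_*=\Omega_+$ and parameter $t=2$; the term on its left involving the level-$k$ annuli $Z_1$ is bounded, via the nesting and a bounded-multiplicity count, by $\Phi_{k-1}$, because the $\zeta$-coefficient $\bigl(\tfrac{t}{2\tilde D_2}+1\bigr)\tfrac{\D_3 M}{\tilde D_1}$ it carries is absorbed into the $2\D_3 M$ already present in $\Phi_{k-1}$ — this uses $N\ge2$ and $\tilde D_1,\tilde D_2\ge1$, and is the delicate balancing of the inhomogeneity alluded to in the footnote. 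This gives, for $k=1,\dots,m-1$,
\[
 \Phi_{k-1}\ \ge\ c\,C_1(\gamma)^{-1/\gamma}\,B^{\,1-1/\gamma}\,\Phi_k^{\,1/\gamma}
\]
with $c$ combinatorial. Composing these $m-1$ inequalities, using the telescoping identity $\bigl(1-\tfrac1\gamma\bigr)\sum_{i=0}^{m-2}\gamma^{-i}=1-\gamma^{1-m}$ together with $\Phi_{m-1}\ge\lVert\psi\rVert_{\Omega_-}^2+\D_3\lVert\zeta\rVert_{\Omega_+}^2$ (terminal covering, $M\ge1$) and $B=M(\lVert\psi\rVert_{\Omega_+}^2+\D_3\lVert\zeta\rVert_{\Omega_+}^2)$, and collecting the accumulated powers of $C_1(\gamma)$, $M$ and $N$ into $C_2(\gamma)$, yields \eqref{eq:C2-LambdaR}. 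In the case $\cJ=\ZZ^d$, $\Omega_-=\Omega_+=\RR^d$ the two factors on the right of \eqref{eq:C2-LambdaR} share the same base and have exponents summing to $1$, so their product is $\lVert\psi\rVert_{\RR^d}^2+\D_3\lVert\zeta\rVert_{\RR^d}^2$, which is \eqref{eq:C2-Rd}.

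The main obstacle is the geometric step: producing the translational chains so that the nesting $Z_1\subset Z_2$ holds at \emph{every} link while the terminal annuli still cover a full unit cube, and checking that the stated depth $m$, breadth $N$, and covering factor $M$ are adequate. A second, genuinely delicate point — one that does not arise in the homogeneous ($\zeta=0$) setting — is tracking how $\zeta$ propagates through the iteration; this is exactly what dictates the particular grouping of $\zeta$-terms in Theorem~\ref{thm:interpolation} and the use of the free constant $\D_3\ge D_3$.
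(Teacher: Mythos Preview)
Your overall strategy --- iterate Theorem~\ref{thm:interpolation} along chains of centres and compose the resulting inequalities --- matches the paper, and your handling of the $\zeta$-balancing (choosing $t=2$ and using $\tilde D_1,\tilde D_2\ge1$ so that the left-hand $\zeta$-coefficient is absorbed into $2\D_3 M$) is correct. The difference is in the geometric realisation of the chains.

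You propose a \emph{bundle} of chains per cell, fanning out from $z_j$ so that the terminal $Z_2$-annuli jointly cover $\Lambda_1(j)$. The paper instead uses a \emph{single} chain per cell: it first fixes $\rho=(R_2-r_2)/4$ and picks $y_j\in\overline{\Lambda_1(j)}$ maximising $\lVert\psi\rVert_{\ball{\rho}{\cdot}}$ over $\Lambda_1(j)$, and then (via Lemma~\ref{lemma:paths}) builds one chain $z_j=z_j^0,\dots,z_j^m=y_j$ inside $\Lambda_{1+2a}(j)$ with step sizes in $[(R_2+3r_2)/4,(3R_2+r_2)/4]$, so that $\ball{\rho}{z_j^{i+1}}\subset Z_2(z_j^i)$ and hence (because $R_1\le\rho$) $Z_1(z_j^{i+1})\subset Z_2(z_j^i)$. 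After $m-1$ iterations the terminal quantity involves $\sum_{j\in\cJ}\lVert\psi\rVert_{\ball{\rho}{y_j}}^2$; covering each $\Lambda_1(j)$ by $N$ balls of radius $\rho$ and invoking the \emph{maximality} of $y_j$ yields $\lVert\psi\rVert_{\Omega_-}^2\le N\sum_{j}\lVert\psi\rVert_{\ball{\rho}{y_j}}^2$. Thus $N$ enters exactly once, at the last step, and acquires the exponent $\gamma^{1-m}$ stated in $C_2(\gamma)$.

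In your scheme the location and exponent of $N$ are not pinned down. If all $N$ chains per cell start at $z_j$, then the level-$0$ quantity is $N\sum_j\lVert\psi\rVert_{Z_2(z_j)}^2+2\D_3 M\lVert\zeta\rVert_{\Omega_+}^2$, so $N$ enters with exponent $1$; moreover, with $N$ centres per cell Theorem~\ref{thm:interpolation} as stated (one centre $x_j$ per $j\in\cJ\subset\ZZ^d$, overlap constant $M$) does not directly apply --- you would need the variant with $M$ replaced by $NM$, which propagates through every level. If instead the chains branch level by level, a multiplicity factor appears at each step and compounds through the iteration. Your ``bounded-multiplicity count'' and the remark ``this uses $N\ge2$'' do not resolve which scenario you intend, and neither one yields the stated $C_2(\gamma)$ without further work. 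The maximiser trick is precisely what sidesteps all of this bookkeeping and produces the exact constant.
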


\begin{proof}
Let $\rho = (R_2 - r_2) / 4$ and define the sequence $(y_j)_{j \in \cJ}$ such that $y_j \in \overline{\Lambda_1 (j)}$ and
\[
  \lVert \psi \rVert_{\ball{\rho}{y_j}} = \sup_{x \in \Lambda_1 (j)} \lVert \psi \rVert_{\ball{\rho}{x}}.
\]
For each $j \in \cJ$ we choose a sequence of points $\tau_j = (z_j^{i})_{i=0}^m \subset \Lambda_{1+2a}(j)$,
with $z_j^{0} = z_j$, $z_j^{m} = y_j$, and $\lvert z_j^{i} - z_j^{i-1} \rvert \in [(R_2 + 3 r_2)/4 , (3R_2 + r_2) / 4]$ for $i \in \{1,\ldots , m\}$.
The proof of the existence of such sequences is postponed to Lemma~\ref{lemma:paths} below.
By construction of the sequences $\tau_j$ we have $\ball{\rho}{z_j^{i+1}} \subset Z_2 (z_j^{i})$.
Since $\tilde D_2 \geq 1$ we have $1/\tilde D_2 + 1 \leq 2$,
hence Theorem~\ref{thm:interpolation} with $t = 2$ and $\Omega_*=\Omega_+$ implies for all $\D_3 \geq D_3$
\begin{multline}\label{eq:3annuli}
\sum_{j \in \cJ} \lVert \psi \rVert_{Z_1 (x_j)}^2 + \frac{2 \D_3 M}{\tilde D_1} \lVert \zeta \rVert_{\Omega_+}^2 \\
\geq
C_1(\gamma)^{-1/\gamma} \biggl( M \lVert \psi \rVert_{\Omega_+}^2 + {\D_3 M}  \lVert \zeta \rVert^2_{\Omega_+} \biggr)^{1-1/\gamma}
\biggl( \sum_{j \in \cJ} \lVert \psi \rVert_{Z_2 (x_j)}^2 + 2 \D_3 M \lVert \zeta \rVert_{\Omega_+}^2 \biggr)^{1/\gamma} .
\end{multline}
for all sequences $(x_j)_{j \in \cJ}$ satisfying the assumption of Theorem~\ref{thm:interpolation}.
Since
\[
\bigcup_{z\in \Lambda_{1+2a}} \ball{R_3}{z} \subset \Lambda_{1+2a+2R_3}
\]
this holds true for the sequence $(z_j^i)_{j \in \cJ}$ for any $i \in \{0,1,\ldots , m-1\}$.
For $i \in \{0,1,\ldots , m-1\}$ we introduce the notation
\[
 A = M \lVert \psi \rVert_{\Omega_+}^2 +  {\D_3 M} \lVert \zeta \rVert_{\Omega_+}^2
 \quad\text{and}\quad
 B(i)  = \sum_{j \in \cJ} \lVert \psi \rVert_{Z_2 (z_j^i)}^2 +  2 \D_3 M \lVert \zeta \rVert_{\Omega_+}^2 .
\]
Since $\rho \geq R_1$ we find
\[
 B (0)
\geq
 \sum_{j \in \cJ} \lVert \psi \rVert_{Z_1 (z_j^1)}^2 + 2 \D_3 M \lVert \zeta \rVert_{\Omega_+}^2  .
\]
We apply Ineq.~\eqref{eq:3annuli} and obtain using $\tilde D_1 \geq 1$
\begin{align*}
  B (0)  &\geq C_1 (\gamma)^{-1/\gamma}  A^{1-1/\gamma}
B^{1/\gamma} (1) .
\end{align*}
After $m-1$ steps of this type we obtain
\begin{equation*}
 B(0) \geq C_1 (\gamma)^{-(\gamma^{-1} + \ldots + \gamma^{-m+1})}  A^{1-\gamma^{-m+1}}
B^{\gamma^{-m+1}} (m-1) .
\end{equation*}
Since $\ball{\rho}{z_j^{m}} \subset Z_2 (z_j^{m-1})$ we obtain
\begin{equation*}
 B(0)\geq C_1 (\gamma)^{-(\gamma^{-1} + \ldots + \gamma^{-m+1})}  A^{1-\gamma^{-m+1}}
\biggl( \sum_{j \in \cJ} \lVert \psi \rVert_{\ball{\rho}{z_j^m}}^2 + 2 \D_3 M \lVert \zeta \rVert_{\Omega_+}^2 \biggr)^{\gamma^{-m+1}} .
\end{equation*}
Since $\Lambda_{\rho / \sqrt{d}} \subset B_\rho$, for each $j \in \cJ$ we cover $\Lambda_1 (j)$ with $N = N (d,\rho) \leq \lceil \sqrt{d} / \rho \rceil^d$ balls of radius $\rho$. We denote the centers of these balls by $x_{jk}$, $k\in\{1,\ldots,N\}$.
Thus, for any $\Omega_- \subset \bigcup_{j  \in \cJ} \Lambda_1(j)$
\begin{equation*} 
\lVert \psi \rVert_{\Omega_-}^2
\leq
\sum_{j \in \cJ} \lVert \psi \rVert_{\Lambda_1 (j)}^2
\leq
\sum_{j \in \cJ} \sum_{k=1}^N \lVert \psi \rVert_{\ball{\rho}{ x_{jk}}}^2
\leq
\sum_{j \in \cJ} \sum_{k=1}^N \lVert \psi \rVert_{\ball{\rho}{y_j}}^2
=
N \sum_{j \in \cJ}  \lVert \psi \rVert_{\ball{\rho}{y_j}}^2 .
\end{equation*}
This implies together with $M,N \geq 1$
\begin{align}\label{eq:Omega+neqOmega-}
 B(0) \nonumber
 &\geq C_1 (\gamma)^{-(\gamma^{-1} + \ldots + \gamma^{-m+1})}  A^{1-\gamma^{-m+1}}
\biggl( \frac{1}{N} \lVert \psi \rVert_{\Omega_-}^2 + 2 \D_3 M \lVert \zeta \rVert_{\Omega_+}^2 \biggr)^{\gamma^{-m+1}}
\\
&\geq C_1 (\gamma)^{-(\gamma^{-1} + \ldots + \gamma^{-m+1})}  A^{1-\gamma^{-m+1}}
\biggl( \frac{1}{N}\biggr)^{\gamma^{-m+1}} \biggl( \lVert \psi \rVert_{\Omega_-}^2 +  \D_3  \lVert \zeta \rVert_{\Omega_+}^2 \biggr)^{\gamma^{-m+1}}
\end{align}
If $\cJ=\ZZ^d$ and $\Omega_-=\Omega_+=\RR^d$ we insert the definition of $A$ to obtain
\begin{align*}
 B(0) &\geq C_1 (\gamma)^{-(\gamma^{-1} + \ldots + \gamma^{-m+1})} M^{1-\gamma^{-m+1}} N^{-\gamma^{-m+1}} \left( \lVert \psi \rVert_{\RR^d}^2 + \D_3 \lVert \zeta \rVert_{\RR^d}^2 \right) .  \qedhere
\end{align*}
\end{proof}

\begin{lemma}[Existence of chain connection]\label{lemma:paths}
  Let $0<a<b<\infty$, $y,z\in\Lambda_1$, and
  $m = 2\lfloor \sqrt{d} / (b-a) \rfloor + 2$.
  Then there is a sequence $\tau = (z^{i})_{i=0}^m \subset \Lambda_{1+2a}$
  with $z^{0} = z$, $z^{m} = y$, and $\lvert z^{i} - z^{i-1} \rvert \in [a,b]$ for $i \in \{1,\ldots , m\}$.
\end{lemma}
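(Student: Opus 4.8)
The plan is to realise the chain by splitting the total displacement $u:=y-z$ into $\ell+1$ equal pieces, where $\ell:=\lfloor \sqrt{d}/(b-a)\rfloor$ (so that $m=2(\ell+1)$), and then to traverse each piece with two steps, one of which is a small coordinate-type detour. The key numerical fact I would use is that, by the very definition of $m$,
\[
 (\ell+1)(b-a) > \sqrt{d} \ge L:=|u| ,
\]
the second inequality because $y$ and $z$ both lie in $\Lambda_1$, whose diameter is $\sqrt{d}$. Hence $w:=u/(\ell+1)$ satisfies $|w| < b-a$, which is the property that makes everything work.

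First I would put down the milestones $P_k := z+kw$ for $k=0,\dots,\ell+1$; they all lie on the segment from $z$ to $y$, hence in $\Lambda_1\subset\Lambda_{1+2a}$, with $P_0=z$ and $P_{\ell+1}=y$. The chain will be $z^0=P_0,\,z^1,\,z^2=P_1,\,z^3,\,z^4=P_2,\dots,z^{2(\ell+1)}=P_{\ell+1}=y$, using exactly two steps between consecutive milestones, so $2(\ell+1)=m$ steps in all. For the two steps from $P_k$ to $P_{k+1}$ I would distinguish two cases. If $|w|\ge 2a$, let $z^{2k+1}:=P_k+w/2$; each of the two steps then has length $|w|/2$, which lies in $[a,b)$ since $a\le|w|/2<(b-a)/2<b$, and $z^{2k+1}$ again lies on the segment. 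If $|w|<2a$, choose a unit vector $v_k$ with $\langle v_k,w\rangle\le 0$ (any unit vector when $w=0$, and $v_k=-w/|w|$ otherwise) and let $z^{2k+1}:=P_k+a v_k$; the first step has length $a$, and the second step $w-av_k$ has length in $[a,b)$ because $|w-av_k|^2=|w|^2-2a\langle w,v_k\rangle+a^2\ge a^2$ while $|w-av_k|\le|w|+a<(b-a)+a=b$. In either case $z^{2k+1}\in\Lambda_{1+2a}$: the vector added to the segment point $P_k$ has Euclidean norm at most $a$, hence sup-norm at most $a$, so $z^{2k+1}\in\Lambda_1+[-a,a]^d=\Lambda_{1+2a}$.

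Concatenating these blocks yields a sequence $\tau=(z^i)_{i=0}^m\subset\Lambda_{1+2a}$ with $z^0=z$, $z^m=y$ and $|z^i-z^{i-1}|\in[a,b]$ for all $i$, as required; the degenerate case $y=z$ is covered by the second case with $w=0$ (two length-$a$ steps out and back). The only point that needs care is the simultaneous requirement that every step be at least $a$ long and that every vertex stay in $\Lambda_{1+2a}$: this forces the detour in the second case to be a single excursion of Euclidean length exactly $a$ (so its sup-norm is at most $a$, which is the slack one has when enlarging $\Lambda_1$ to $\Lambda_{1+2a}$), and the bound $|w|<b-a$ — which is precisely what the definition of $m$ buys — is then exactly what keeps the return step shorter than $b$. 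Everything else is a routine verification of the three conditions in the statement, so I expect no further obstacle.
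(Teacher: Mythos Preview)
Your proof is correct. The core device is the same as the paper's: two steps of length in $[a,b]$ suffice to realise any net displacement of Euclidean length at most $b-a$, and the excursion point stays within Euclidean distance $a$ of a segment point, hence within $\Lambda_{1+2a}$.

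Where you differ is in the bookkeeping. The paper advances along the segment $[z,y]$ in net increments of \emph{exactly} $b-a$ (an $a$-step backward, then a $b$-step forward) until it lands within distance $b-a$ of $y$, uses one more pair of steps to reach $y$, and then pads the remaining pairs by marking time at $y$ with length-$a$ back-and-forth excursions. You instead subdivide the total displacement into $\ell+1=m/2$ \emph{equal} pieces $w$, each automatically of length $<b-a$, and spend one pair of steps per piece --- splitting $w$ in half when $|w|\ge 2a$, and otherwise doing an $a$-detour in a direction with nonpositive inner product with $w$. Your version avoids the final padding stage and the case analysis ``how many full $b-a$ steps before the remainder''; the paper's version avoids your case split on $|w|\gtrless 2a$. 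Both are equally elementary, and the verification that all intermediate points lie in $\Lambda_{1+2a}$ is the same in both (the excursion has Euclidean, hence sup-norm, length at most $a$ from a point of $\Lambda_1$).
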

In Theorem \ref{thm:chaining} we apply the Lemma with the choice $b= (3R_2+r_2)/4, a= (R_2+3r_2)/4$,
hence $\Lambda_{1+2a}=\Lambda_{1+(R_2+3r_2)/2}$.
\begin{proof}
  Starting from $z^k$, we observe that in two steps we can reach any point
  $z^{k+2}$
  inside the closed ball with radius $b-a$ and center $z^k$.
  This can be achieved by choosing $z^{k+1}$ such that $\lvert z^{k+1} - z^k \rvert =a$ and $\lvert z^{k+2} - z^{k+1} \rvert \in [a,b]$, where in the first step we move away from $z^{k+2}$,
  and in the second step we move back arriving at $z^{k+2}$, see Fig.~\ref{fig:1}.
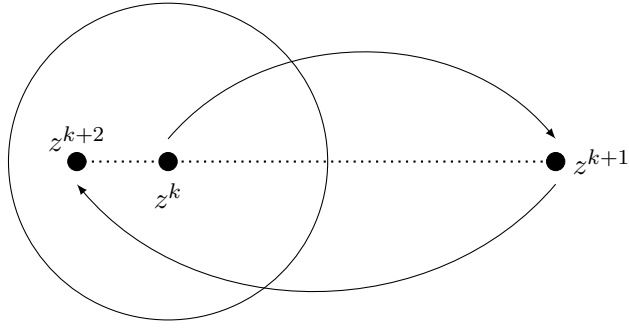
\begin{figure}[ht]\centering
  \begin{tikzpicture}
  \begin{scope}[xscale = 3, yscale = 3]
  \filldraw[fill=black] (2.3,3) circle (0.04cm);
  \draw (2.3,2.95) node[below] {$z^k$};
  \filldraw[fill=black] (4,3) circle (0.04cm);
  \draw (4.03,3) node[right] {$z^{k+1}$};
  \filldraw[fill=black] (1.9,3) circle (0.04cm);
  \draw (1.9,3) node[above] {$z^{k+2}$};
  \draw[dotted,thick] (1.9,3)--(4,3);
  \draw (2.3,3) circle (0.7cm);
  \draw[-latex] (2.3,3.1) to [out=50,in=130] (4,3.1);
  \draw[-latex] (4,2.9) to [out=-130, in=-50] (1.9,2.9);
  \end{scope}
  \end{tikzpicture}
  \caption{Within two steps we can reach any point in $\overline{\ball{b-a}{z^k}}$\label{fig:1}}
\end{figure}
  Now let $\mu := 2 \lfloor \lvert y-z \rvert / (b-a) \rfloor$
  and note that $\mu = 0$ if $y \in \ball{b-a}{z}$, and $\mu \leq 2 \lfloor \sqrt{d} / (b-a) \rfloor = m-2$.
Moving along the line connecting $z$ and $y$, in the first $\mu$ steps we approach $y$ in the sense that for all $k \in 2 \NN_0$ with $0 \leq k \leq \mu - 2$ we have
\[
 \lvert z^{k} - z^{k+2} \rvert = b-a \quad \text{and} \quad
 \lvert y - z^{k+2} \rvert = \lvert y - z^{k} \rvert - (b-a) .
\]
This can be achieved by choosing $z^{k+1}$ and $z^{k+2}$ such that $\lvert z^{k+1} - z^k \rvert = a$ and $\lvert z^{k+2} - z^{k+1} \rvert = b$ where the first step is done moving away from $y$ and the second one is done moving closer towards $z^k$, see again Fig.~\ref{fig:1}.
Then we repeat this double step exactly $\mu / 2$ times, see Fig.~\ref{fig:2}.
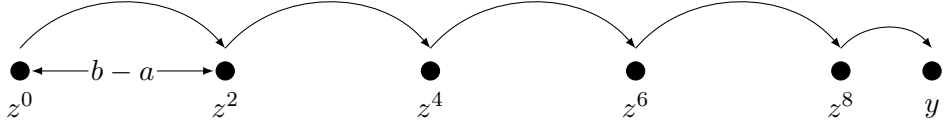
\begin{figure}[ht]\centering
  \begin{tikzpicture}[xscale = 3, yscale = 3]
  \foreach \x/\y in {0/0,0.9/2,1.8/4,2.7/6}{
	\filldraw[fill=black] (\x,0) circle (0.04cm);
	\draw (\x,-0.05) node[below] {$z^{\y}$};
	\draw[-latex] (\x,0.1) to [out=50,in=130] (\x+0.9,0.1);
  }
  \filldraw[fill=black] (3.6,0) circle (0.04cm);
  \draw (3.6,-0.05) node[below] {$z^{8}$};
  \filldraw[fill=black] (4,0) circle (0.04cm);
  \draw (4,-0.08) node[below] {$y$};
  \draw[-latex] (3.6,0.1) to [out=50,in=130] (4,0.1);
  \draw (0.45,0) node {$b-a$};
  \draw[-latex] (0.6,0)--(0.85,0);
  \draw[-latex] (0.3,0)--(0.05,0);
  \end{tikzpicture}
\caption{Illustration of a sequence $\tau$ with $\mu = 8$\label{fig:2}}
\end{figure}
By construction we now have $y \in \ball{b-a}{z^{\mu}}$, see Fig.~\ref{fig:2}.
Hence, after $\mu + 2$ steps we reach $y$, see Fig.~\ref{fig:1}.
The remaining $m - \mu - 2$ steps we just go back and forth such that $z^k = y$ for $k \in 2 \NN$ with $\mu + 2 \leq k \leq m$.
By construction we have $\tau \subset \Lambda_{1+2a}$.
\end{proof}
Now we are in position to prove our two main theorems. The first one concerns functions on the whole of $\RR^d$.
\begin{proof}[Proof of Theorem~\ref{thm:sampling}]
 We choose $\epsilon = 1$,
\begin{equation}
\begin{aligned} \label{eq:radii-delta}
R_3 &= (33 \euler d \Elliptic^{11/2} (\Lipschitz + 1))^{-1} & \quad
R_2&=\frac{R_3}{2 \euler (\Elliptic+1)^{5/2}} &\quad
R_1& = R_1^\delta = \delta
\\
r_3 &= \frac{R_3}{\Elliptic+1} &\quad
r_2&= R_2 / 5 &\quad
r_1&= r_1^\delta =  \delta  / 2 .
 \end{aligned}
\end{equation}
We have now introduced a superscript $\delta$ in $r_1^\delta$ and $R_1^\delta$ making the dependence on this parameter explicit. (Note that the other radii do not depend on the parameter $\delta$.) Consequently, $\tilde D_1 = \tilde D_1^\delta$, $D_3 = D_3^\delta$ and $C_1 (\gamma) = C_1^\delta (\gamma)$ since they all depend on $R_1 = R_1^\delta$.
Since $\delta \in (0,\delta_0]$ we have $R_1^\delta= \delta \leq \delta_0= r_2$ and Lemma~\ref{lemma:interpolation} applies.
Hence, Assumption~\eqref{ass:radii} is satisfied. Now we apply Theorem~\ref{thm:interpolation} with $\cJ =\ZZ^d$, $\Omega_* = \Lambda_R=\RR^d$,
\begin{equation}\label{eq:gamma1}
 \gamma_1 = \gamma_1^\delta = \frac{\ln (r_3 / (R_2 \mu_1 \Elliptic))}{\ln (r_3 / r_1^\delta)} = \frac{\ln (r_3 / (R_2 \mu_1 \Elliptic))}{\ln (2r_3 / \delta )} \in (0,1)
\end{equation}
and $t = 2$, and obtain using again $1/ \tilde D_2 + 1 \leq 2$
\begin{align}\label{eq:apply-chain}
\lVert \psi & \rVert_{\equi_{\delta,Z}}^2 + \frac{2 D_3^\delta M}{\tilde D_1^\delta} \lVert \zeta \rVert_{\RR^d}^2\nonumber
\geq
\sum_{j \in \ZZ^d} \lVert \psi \rVert_{Z_1 (z_j)}^2 +  \frac{2 D_3^\delta M}{\tilde D_1^\delta} \lVert \zeta \rVert_{\RR^d}^2 \nonumber \\
&\geq
C_1^\delta (\gamma_1^\delta)^{-1/\gamma_1^\delta} (M \lVert \psi \rVert_{\RR^d}^2 +D_3^\delta M \lVert \zeta \rVert_{\RR^d}^2)^{1-1/\gamma_1^\delta}
\biggl( \sum_{j \in \ZZ^d} \lVert \psi \rVert_{Z_2 (z_j)}^2 + 2 D_3^\delta M \lVert \zeta \rVert_{\RR^d}^2 \biggr)^{1/\gamma_1^\delta} .
\end{align}
Next we apply Theorem~\ref{thm:chaining}, but with all radii independent of $\delta$, namely:
\begin{equation}
\begin{aligned} \label{eq:radii-2}
R_3 &= (33 \euler d \Elliptic^{11/2} (\Lipschitz + 1))^{-1} & \quad
R_2&=\frac{R_3}{2 \euler (\Elliptic+1)^{5/2}} &\quad
R_1 & =  r_2
\\
r_3 &= \frac{R_3}{\Elliptic+1} &\quad
r_2&= R_2 / 5 &\quad
r_1&=  r_2 / 2 .
 \end{aligned}
\end{equation}
Then  Assumption~\eqref{ass:radii} is still satisfied and additionally $R_1 = (R_2 - r_2) / 4$ holds.
Moreover, calculating the derivative shows that the map $\delta \mapsto D_3^\delta$ in strictly decreasing on the interval $(0,\delta_*]$ with
\[
\delta_*:= \left[\frac{128 \, \Elliptic \, \tilde\Theta}{2+8\, \Elliptic \, \Theta_3}\right]^{1/4}.
\]
Due to $R_3= 10 \, \euler \, (\Elliptic+1)^{5/2} \cdot \delta_0$ we see that  $\delta_0< \delta_*$.
Having in mind $\delta_0 = r_2$ this gives
\[
D_3^\delta \geq D_3  := D_3^{r_2} \text{ for all } \delta \in(0,\delta_0).
\]
The last inequality and Theorem~\ref{thm:chaining} with $\D_3 = D_3^\delta$ imply
\begin{equation*}
\sum_{j\in\ZZ^d} \lVert \psi \rVert_{Z_2 (z_j)}^2
 +
  2 D_3^\delta M \lVert \zeta \rVert_{\RR^d}^2
 \geq
  C_2 (\gamma_2)^{-1} \left(\lVert \psi \rVert_{\RR^d}^2 + D_3^\delta \lVert \zeta \rVert_{\RR^d}^2 \right),
\end{equation*}
where
\begin{equation} \label{eq:gamma2}
C_2 (\gamma_2)
=
C_1(\gamma_2)^{\gamma_2^{-1}+\ldots+\gamma_2^{-m+1}} M^{\gamma_2^{1-m} - 1} N^{\gamma_2^{1-m}},
 \quad
 \gamma_2 = \frac{\ln (r_3 / (R_2 \mu_1 \Elliptic))}{\ln (2r_3 / r_2\})} \in (0,1).
\end{equation}
Inserting this into \eqref{eq:apply-chain} we obtain
\begin{align*}
 \lVert \psi \rVert_{\equi_{\delta,Z}}^2 &+ \frac{2 D_3^\delta M}{\tilde D_1^\delta}  \lVert \zeta \rVert_{\RR^d}^2 \\
 &\geq C_1^\delta(\gamma_1^\delta)^{-1/\gamma_1^\delta} C_2 (\gamma_2)^{-1/\gamma_1^\delta} M^{1-1/\gamma_1^\delta}
  (\lVert \psi \rVert_{\RR^d}^2 + D_3^\delta \lVert \zeta \rVert_{\RR^d}^2)^{1-1/\gamma_1^\delta}
   \left(\lVert \psi \rVert_{\RR^d}^2 + D_3^\delta \lVert \zeta \rVert_{\RR^d}^2 \right)^{1/\gamma_1^\delta} \\
 &\geq C_1^\delta(\gamma_1^\delta)^{-1/\gamma_1^\delta} C_2 (\gamma_2)^{-1/\gamma_1^\delta} M^{1-1/\gamma_1^\delta}
  (\lVert \psi \rVert_{\RR^d}^2 + D_3^\delta \lVert \zeta \rVert_{\RR^d}^2)
   .
\end{align*}
In order to estimate the constant $C_1^\delta (\gamma_1^\delta)^{-1/\gamma_1^\delta} C_2 (\gamma_2)^{-1/\gamma_1^\delta} M^{1-1/\gamma_1^\delta} $ from below,
we will estimate its inverse from above and denote by $K_i \geq 1$, $i \in \NN_0$, constants depending only on $d$, $\Elliptic$ and $\Lipschitz$.
Note that $R_{\mathrm{max}} = \sqrt{d} + (R_2 + 3r_2)/4$, $M = (2R_3 + 2a+ 1)^d$, $m = 2\lfloor 2\sqrt{d} (R_2 - r_2) \rfloor + 2$, $N = [4/(R_2 - r_2)]^d$,
\[
 \frac{1}{\gamma_2} = \frac {\ln (2r_3 / r_2\})}{\ln (r_3 / (R_2 \mu_1 \Elliptic))},
 \quad
 \mu_1 =
        \begin{cases}
          \exp( \sqrt{\Elliptic} \mu )& \text{if $\sqrt{\Elliptic} \mu \leq 1$,}\\
            \euler \sqrt{\Elliptic} \mu & \text{if $\sqrt{\Elliptic} \mu > 1$},
         \end{cases}
\]
$\mu = 33 d R_3 \Elliptic^{11/2} \Lipschitz + 1$, and $1 / r_2
=330  \euler^2 d (\Elliptic+1)^{5/2} \Elliptic^{11/2} (\Lipschitz + 1)$
are greater than or equal to one and functions of $d$, $\Elliptic$ and $\Lipschitz$ only.
By Lemma~\ref{lemma:interpolation} we have
\[
 C_1^\delta (\gamma_1^\delta)^{1/{\gamma_1^\delta}} \leq {\delta}^{-K_1 (1+\lVert V \rVert_\infty^{2/3} + \lVert b \rVert_\infty^2 + \lVert c \rVert_\infty^{2/3})} .
\]
We write $M^{1/\gamma_1^\delta-1} =  M^{-1} M^{1/\gamma_1^\delta}$.
Since $\delta\leq \delta_0 < \frac{1}{2}$, we have $M^{-1} \leq 1<2 <\delta^{-1}$.
Furthermore, we have
\begin{equation}\label{eq:nur-oben}
    M^{1/\gamma_1^\delta}
 =  M^{\frac{\ln (2r_3 / \delta )}{\ln (r_3 / (R_2 \mu_1 \Elliptic))}}
 = \left( \frac{2 r_3}{\delta} \right)^{\frac{M}{\ln (r_3 / (R_2 \mu_1 \Elliptic))}}.
\end{equation}
The last term (observing $\delta\leq \delta_0=r_2 \leq 2 r_3$) is bounded by
\begin{equation}\label{eq:nur-oben2}
 \left( \frac{2 r_3}{\delta} \right)^{K_5} \leq  \left( \frac{1}{\delta} \right)^{K_5}
\end{equation}
with $K_5\geq 1$, since $2 r_3 \leq 1$. Collecting terms we obtain
$M^{1/\gamma_1^\delta-1}\leq \delta^{-1}\,  \delta^{-K_5} =:\delta^{-K_6}$.
We apply once more Lemma~\ref{lemma:interpolation},
this time with $R_1 = r_2$
\[
C_1 (\gamma_2)
\leq r_2^{-\gamma_2 \cdot K_1(1 + \lVert V \rVert_\infty^{2/3} + \lVert b \rVert_\infty^{2} + \lVert c \rVert_\infty^{2/3})\cdot K_0}
\leq K_8^{(1 + \lVert V \rVert_\infty^{2/3} + \lVert b \rVert_\infty^{2} + \lVert c \rVert_\infty^{2/3})}
\]
with $K_8 \geq 1$, since $r_2<1$ and $\gamma_2 \cdot K_1 >0$.
Since $M,N,m,1/\gamma_2$ depend only on $d$, $\Elliptic$ and $\Lipschitz$ and are at least one,
we see that
\begin{align*}
C_2 (\gamma_2)^{1/\gamma_1^\delta}
 & =\left(C_1 (\gamma_2)^{(\gamma_2^{-1} + \ldots + \gamma_2^{-m+1})} M^{\gamma_2^{1-m} - 1} N^{\gamma_2^{1-m}}\right)^{1/\gamma_1^\delta} \\
 & \leq
 \left( K_8^{(1 + \lVert V \rVert_\infty^{2/3} + \lVert b \rVert_\infty^{2} + \lVert c \rVert_\infty^{2/3}) \cdot K_9} K_{10}\right)^{1/\gamma_1^\delta}
=
\left(K_{11}^{1/\gamma_1^\delta} \right)^{(1 + \lVert V \rVert_\infty^{2/3} + \lVert b \rVert_\infty^{2} + \lVert c \rVert_\infty^{2/3})} .
\end{align*}
Arguing as in \eqref{eq:nur-oben} and \eqref{eq:nur-oben2}, now with $M$ replaced by $K_{11}$, we conclude
\begin{align*}
 C_2 (\gamma_2)^{1/\gamma_1^\delta} &
\leq
\left({\delta}^{-K_{12}} \right)^{(1 + \lVert V \rVert_\infty^{2/3} + \lVert b \rVert_\infty^{2} + \lVert c \rVert_\infty^{2/3})} .
\end{align*}
Putting everything together we obtain
\[
C_1 (\gamma_1)^{-1/\gamma_1} C_2 (\gamma_2)^{-1/\gamma_1} M^{1-1/\gamma_1}
\geq {\delta}^{K_{13}(1+\lVert V \rVert_\infty^{2/3} + \lVert b \rVert_\infty^{2} + \lVert c \rVert_\infty^{2/3})}.
\]
By Lemma~\ref{lemma:interpolation} we have $2 D_3^\delta M / \tilde D_1^\delta \leq K_{14} \delta^2$. This shows the statement of the theorem.
\end{proof}
\bigskip

After completing the proof for the $\RR^d$-case, we turn to functions defined on finite boxes $\Lambda_L$.

\begin{proof}[Proof of Theorem~\ref{thm:equidistribution}]
Since $L\in \NN$ and $R_3 < 1/89  \ll  1$ we have  $ L + R_3 \leq  3L/2 $, hence
$\ball{R_3}{x} \subset \Lambda_{3L}$ for all $x \in \Lambda_L$.
\medskip

As explained in Appendix~\ref{sec:extensions}, we extend the functions $\psi_L$ and $\zeta_L$, the coefficients $A_L$, $b_L$, and $c_L$, as well as the potential $V_L$ to $\Lambda_{9L}$, such that the properties given below in Lemmata \ref{l:extension} 
and \ref{lemma:extensions} are satisfied. 
We denote these extensions by $\hat\psi_{L}$, $\hat \zeta_{L}$, $\hat A_{L}$, $\hat b_{L}$, $\hat c_{L}$, and $\hat V_{L}$. Moreover, we denote by $\hat a_{L} : \HNullEins (\Lambda_{9L}) \times \HNullEins (\Lambda_{9L}) \to \CC$ the densely defined, closed, and sectorial form
\[
 \hat a_L (u,v)  = \int_{\Lambda_{9L}} \left( \sum_{i,j=1}^d \hat a_L^{ij} \partial_i u \overline{\partial_j v} + \sum_{i=1}^d \hat b_L^i \partial_i u \overline{v} + \hat c_L u \overline{v} \right) \drm x ,
\]
and by $\hat H_L$ the m-sectorial operator associated with the form $\hat a_L$. We denote the domain of $\hat H_L$ by $\Dom (\hat H_L)$.
Note that $\hat H_L$ is the Friedrichs extension of the differential operator $\hat \Op_L : C_{\mathrm{c}}^\infty (\Lambda_{9L}) \to L^2 (\Lambda_{9L})$,
 \[
\hat \Op_L u := - \diver (\hat A_L \nabla u) + \hat b_L^\T \nabla u + \hat c_L u .
\]
Then we have $\hat \psi_L \in \Dom (\hat H_L)$, $\lvert \hat H_L \hat \psi_L \rvert \leq \lvert \hat V_L \hat \psi_L \rvert + \lvert \hat \zeta_L \rvert$ almost everywhere on $\Lambda_{9L}$, $\hat A_L$ satisfies the ellipticity and Lipschitz condition \eqref{eq:elliptic2} on $\Lambda_{9L}$, $\lVert \hat b_L \rVert_\infty = \lVert b_L \rVert_\infty$, and $\lVert \hat c_L \rVert_\infty = \lVert c_L \rVert_\infty$, see Lemma~\ref{lemma:extensions}.
Note that $\hat \psi_L \mid_{\Lambda_{3L}}\colon \Lambda_{3L} \to \CC$ has all these nice properties as well.
\medskip

As in the proof of Theorem~\ref{thm:sampling} we want to apply Theorem~\ref{thm:interpolation}  
and make the choice $\epsilon = 1$,
\begin{equation}
\begin{aligned} \label{eq:radii-delta-L}
R_3 &= (33 \euler d \Elliptic^{11/2} (\Lipschitz + 1))^{-1} & \quad
R_2&=\frac{R_3}{2 \euler (\Elliptic+1)^{5/2}} &\quad
R_1& = R_1^\delta = \delta
\\
r_3 &= \frac{R_3}{\Elliptic+1} &\quad
r_2&= R_2 / 5 &\quad
r_1&= r_1^\delta =  \delta  / 2 .
 \end{aligned}
\end{equation}
\begin{equation}\label{eq:gamma1-L}
 \gamma_1 = \gamma_1^\delta = \frac{\ln (r_3 / (R_2 \mu_1 \Elliptic))}{\ln (r_3 / r_1^\delta)} = \frac{\ln (r_3 / (R_2 \mu_1 \Elliptic))}{\ln (2r_3 / \delta )} \in (0,1)
\end{equation}
and additionally $t = 2$, $\Omega_*= \Lambda_{3L}$,
$\Lambda_R=\Lambda_{9L}$,
$\cJ= \ZZ^d\cap\Lambda_L$.
Since $D_3^\delta \geq D_3$ and $1/ \tilde D_2 + 1 \leq 2$ Theorem~\ref{thm:interpolation}  gives
with the same $C_1(\gamma)$, $M$, $\tilde D_2$, and $\tilde D_1$ as there
\begin{align}\label{eq:apply-chain-L}
\lVert \hat\psi_L  & \rVert_{\equi_{\delta,Z}(L)}^2 + \frac{2 D_3^\delta M}{\tilde D_1^\delta} \lVert \hat\zeta_L \rVert_{\Lambda_{3L}}^2
\geq
\sum_{j \in \ZZ^d \cap\Lambda_L} \lVert \hat\psi_L  \rVert_{Z_1 (z_j)}^2 +  \frac{2 D_3^\delta M}{\tilde D_1^\delta} \lVert \hat \zeta_L \rVert_{\Lambda_{3L}}^2 \\
&\geq  \nonumber
C_1^\delta (\gamma_1^\delta)^{-1/\gamma_1^\delta} (M \lVert \hat\psi_L  \rVert_{\Lambda_{3L}}^2 +D_3^\delta M \lVert \hat \zeta_L \rVert_{\Lambda_{3L}}^2)^{1-1/\gamma_1^\delta}
\biggl( \sum_{j \in \ZZ^d\cap\Lambda_L} \lVert \hat\psi_L  \rVert_{Z_2 (z_j)}^2 + 2 D_3^\delta M \lVert \hat \zeta_L \rVert_{\Lambda_{3L}}^2 \biggr)^{1/\gamma_1^\delta} .
\end{align}
(At this stage it becomes apparent why we need the extensions to a larger cube:
the annuli $Z_i(x_j)$, $i\in\{2,3\}$, around $x_j$ for $j\in \ZZ^d\cap \Lambda_L$ might extend beyond the cube $\Lambda_L$ depending on the choice of the radii and their centers $x_j$.)

Next we apply Theorem~\ref{thm:chaining}, with  $\cJ= \ZZ^d \cap \Lambda_L$,
$\Omega_-=\Lambda_{L},\Omega_+=\Lambda_{9L}$, $\D_3 = D_3^\delta$,
\begin{equation}
\begin{aligned} \label{eq:radii-2-L}
R_3 &= (33 \euler d \Elliptic^{11/2} (\Lipschitz + 1))^{-1} & \quad
R_2&=\frac{R_3}{2 \euler (\Elliptic+1)^{5/2}} &\quad
R_1 & =  r_2
\\
r_3 &= \frac{R_3}{\Elliptic+1} &\quad
r_2&= R_2 / 5 &\quad
r_1&=  r_2 / 2 .
 \end{aligned}
\end{equation}
and $
 \gamma_2 = \frac{\ln (r_3 / (R_2 \mu_1 \Elliptic))}{\ln (2r_3 / r_2\})} \in (0,1).$
 This yields
\begin{multline}
\label{eq:C2-LambdaR-L}
 C_2 (\gamma_2) \left( \sum_{j \in \ZZ^d\cap\Lambda_L} \lVert\hat \psi_L\rVert_{Z_2 (z_j)}^2
 + 2 D_3^\delta M \lVert \hat \zeta_L \rVert_{\Lambda_{9L}}^2\right)
\\
\geq
\left( \lVert \hat \psi_L \rVert_{\Lambda_{9L}}^2 +  {D_3^\delta } \lVert \hat \zeta_L \rVert_{\Lambda_{9L}}^2 \right)^{1-\gamma_2^{-m+1}}
\biggl( \lVert \hat \psi_L \rVert_{\Lambda_{L}}^2 + D_3^\delta \lVert \hat \zeta_L \rVert_{\Lambda_{9L}}^2 \biggr)^{\gamma_2^{-m+1}}
\end{multline}
where
\begin{equation}
\label{eq:gamma2-L}
C_2 (\gamma_2)
=
C_1(\gamma_2)^{\gamma_2^{-1}+\ldots+\gamma_2^{-m+1}} M^{\gamma_2^{1-m} - 1} N^{\gamma_2^{1-m}},
 \quad
 \gamma_2 = \frac{\ln (r_3 / (R_2 \mu_1 \Elliptic))}{\ln (2r_3 / r_2\})} \in (0,1).
\end{equation}
Due to the  reflection extension of
$\psi, \zeta\colon \Lambda_{L}\to \CC$  we have
\[
\|\psi\|_{\Lambda_{L}}
=
\frac{1}{9^d}\|\hat \psi_L\|_{\Lambda_{9L}}
\quad \text{ and } \quad
\lVert \zeta_L \rVert_{\Lambda_{L}}
=
\frac{1}{3^d}\|\hat \zeta_L\|_{\Lambda_{3L}} \quad \text{ etc. }
\]
and consequently the right hand side of \eqref{eq:C2-LambdaR-L} can be estimated in the following way 
\begin{align*}
\left( \lVert \hat \psi_L \rVert_{\Lambda_{9L}}^2 +  {D_3^\delta } \lVert \hat\zeta_L \rVert_{\Lambda_{9L}}^2 \right)^{1-\gamma_2^{-m+1}}
\biggl( \frac{1}{9^d} \lVert \hat \psi_L \rVert_{\Lambda_{9L}}^2 + D_3^\delta \lVert \hat\zeta_L \rVert_{\Lambda_{9L}}^2 \biggr)^{\gamma_2^{-m+1}}
\\
\geq
9^{-d\gamma_2^{-m+1}}
\left( \lVert \hat \psi_L \rVert_{\Lambda_{9L}}^2 +  {D_3^\delta } \lVert \hat\zeta_L\rVert_{\Lambda_{9L}}^2 \right)
\end{align*}
Inserting this into \eqref{eq:apply-chain-L} gives
\begin{multline*}
\lVert \psi_L   \rVert_{\equi_{\delta,Z}(L)}^2 +
{3^d}
\frac{2 D_3^\delta M}{\tilde D_1^\delta} \lVert \hat\zeta_L \rVert_{\Lambda_{L}}^2\nonumber
=
\lVert \hat\psi_L   \rVert_{\equi_{\delta,Z}(L)}^2 + \frac{2 D_3^\delta M}{\tilde D_1^\delta} \lVert \hat\zeta_L \rVert_{\Lambda_{3L}}^2
\geq
\nonumber \\
C_1^\delta (\gamma_1^\delta)^{-1/\gamma_1^\delta} (M \lVert \hat\psi_L  \rVert_{\Lambda_{3L}}^2 +D_3^\delta M \lVert \hat \zeta_L \rVert_{\Lambda_{3L}}^2)^{1-1/\gamma_1^\delta}
\biggl(C_2 (\gamma_2)^{-1}
9^{-d\gamma_2^{-m+1}}
\left( \lVert \hat \psi_L \rVert_{\Lambda_{9L}}^2 +  {D_3^\delta } \lVert \hat\zeta_L\rVert_{\Lambda_{9L}}^2 \right)\biggr)^{1/\gamma_1^\delta}
 \\
=
C_1^\delta (\gamma_1^\delta)^{-1/\gamma_1^\delta}
M^{1-1/\gamma_1^\delta}
C_2 (\gamma_2)^{-1/\gamma_1^\delta} 
\biggl(9^{-d\gamma_2^{-m+1}} 9^d \biggr)^{1/\gamma_1^\delta}
( \lVert \hat\psi_L  \rVert_{\Lambda_{L}}^2 +D_3^\delta \lVert \hat \zeta_L \rVert_{\Lambda_{L}}^2)
\end{multline*}

Note that on  $ \Lambda_L\supset \equi_{\delta,Z}(L)$ the extension  $\hat\psi_L$ coincides with $\psi_L$.
The stated bounds on the constants are already given in the proof of Theorem~\ref{thm:sampling}, 
except for the new factor $ (9^{-d\gamma_2^{-m+1}} 9^d)^{1/\gamma_1^\delta}$. 
Since $m$ and $ \gamma_2$ depend only on
$d$, $\Elliptic$, $\Lipschitz$ this factor is of the form $ K^{1/\gamma_1^\delta}$.
Hence as in
\eqref{eq:nur-oben} and \eqref{eq:nur-oben2} we obtain
\[
(9^{-d\gamma_2^{-m+1}} 9^d)^{1/\gamma_1^\delta} \leq \delta^{-\tilde K}
\]
with $\tilde K$ depending only on  $d$, $\Elliptic$, $\Lipschitz$.
\end{proof}
\appendix
%
%
%
%
%
%
%
%
%

\section{Extension of the differential inequality}
\label{sec:extensions}
In this appendix we complement the proof of Theorem~\ref{thm:equidistribution} and explain how to extend $\psi_L$, $V_L$, and the coefficients of the operator $H_L$. We start with a slightly simpler example.
\par
Let $\Omega_- = (-1,0) \times (0,1)^{d-1}$ and consider the form $a_- : \HNullEins (\Omega_-) \times \HNullEins (\Omega_-) \to \CC$ given by
\[
 a_- (u,v) = \int_{\Omega_-} \left( \sum_{i,j=1}^d a_-^{ij} \partial_i u \overline{\partial_j v} + \sum_{i=1}^d b_-^i \partial_i u \overline{v} + c_- u \overline{v} \right) \drm x ,
\]
where $A_- : {\Omega_-} \to \RR^{d \times d}$ with $A_-= (a_-^{ij})_{i,j=1}^d$, $b_- : {\Omega_-} \to \CC^d$, and $c_- \colon {\Omega_-} \to \CC$. We assume that $a_-^{ij} \equiv a_-^{ji}$ for all $i,j \in \{1,\ldots , d\}$, and that there are constants $\Elliptic \geq 1$ and $\Lipschitz \geq 0$ such that for
all $x,y \in {\Omega_-}$ and all $\xi \in \RR^d$ we have
\begin{equation*} 
\Elliptic^{-1} \lvert \xi \rvert^2 \leq \xi^\T A_- (x) \xi \leq \Elliptic \lvert \xi \rvert^2
\quad\text{and}\quad
\lVert A_-(x) - A_-(y) \rVert_\infty \leq \Lipschitz \lvert x-y \rvert .
\end{equation*}
Moreover, we assume that $b_-,c_- \in L^\infty ({\Omega_-})$.
The form $a_-$ is densely defined, closed, and sectorial. We denote $H_-$ the m-sectorial operator associated with the form $a_-$, and its domain by $\Dom (H_-)$. Note that $H_-$ is the Friedrichs extension of the operator $\Op_- : C_{\mathrm{c}}^\infty (\Omega_-) \to L^2 (\Omega_-)$,
 \[
\Op_- u := - \diver (A_- \nabla u) + b_-^\T \nabla u + c_- u = -\sum_{i,j=1}^d \partial_i \left( a_-^{ij} \partial_j u \right) + \sum_{i=1}^d b_-^i \partial_i u + c_- u .
\]
Fix $\psi_- \in \Dom (H_-)$, $V_- \in L^\infty (\Omega_-)$ real-valued and $\zeta_- \in L^2 (\Omega_-)$ such that $\lvert H_- \psi_- \rvert \leq \lvert V_- \psi_- \rvert + \lvert \zeta_- \rvert$ almost everywhere on $\Omega_-$.
\par
Let $\Omega = \operatorname{int} (\overline\Omega_- \cup \overline\Omega_+)$, where $\Omega_+ = (0,1) \times (0,1)^{d-1}$. We now explain how to extend the functions $\psi_-$ and $\zeta_-$, and the coefficients of the operator to the set $\Omega$.
Since the coefficients $a_-^{ij}$, $i,j\in\{1,\ldots d\}$ obey a Lipschitz condition on $\Omega_-$
by assumption, they are pointwise well defined,
  and extend in a unique way to continuous functions  $a_-^{ij} : \tilde\Omega_- = (-1,0]\times (0,1)^{d-1} \to \RR$, $i,j\in\{1,\ldots d\}$, which will be denoted by the same symbol.
We assume that
\begin{enumerate}[(Dir'')]
  \item the coefficients $a_-^{1k} = a_-^{k1}$ vanish on $\tilde\Omega_- \setminus \Omega_-$ for all $k \in \{2,\ldots , d\}$.
\end{enumerate}
We first extend $b_-, c_-, V_-,\zeta_-,\psi_-$ to  $\tilde\Omega_-$ by setting their value on the interface $\tilde\Omega_- \setminus \Omega_-$  equal to zero.
 We extend the function $\psi_-$ from $\Omega_-$ to $\Omega$ by antisymmetric reflection with respect to the boundary $\tilde\Omega_- \setminus \Omega_-$, and denote the extended function by $\psi_\Omega \in L^2 (\Omega)$. By antisymmetric reflection we mean that $\psi_\Omega = \psi_- $ on $\tilde\Omega_-$, and
\[
 \psi_\Omega (x) = - \psi_- (x - 2 x_1 e_1)
\]
for $x \in \Omega_+$. The extended coefficient functions are defined by
\[
  a_\Omega^{ij} (x) = a_-^{ij} (x), \quad b_\Omega^i (x) = b_-^i (x), \quad c_\Omega (x) = c_- (x), \quad V_\Omega (x) = V_- (x) ,
\]
if $x \in \tilde\Omega_-$, and extended by the rule
\begin{align*}
&a_\Omega^{kk}(x)= a_-^{kk} (x-2 x_1 e_1 ) & \text{for $k \in \{1,\ldots , d\}$},
\\
&a_\Omega^{kj} (x) = a_\Omega^{jk}(x) = a_-^{kj} (x -2 x_1 e_1) & \text{for $k,j \in \{2,\ldots , d\}$ with $k \not = j$}, \\
&a_\Omega^{1k} (x) = a_\Omega^{k1} (x) = -a_-^{1k} (x - 2 x_1 e_1 )  & \text{for $k \in \{2,\ldots , d\}$}, \\
& b_\Omega^i(x)=  b_-^i ( x -2 x_1 e_1 ) & \text{for $i \in \{2,\ldots , d\}$}, \\
& b_\Omega^1(x)= -b_-^1 ( x -2 x_1 e_1 ) & \\
& c_\Omega (x)=c_-(x -2 x_1 e_1), & \\
& V_\Omega (x)=V_-(x -2 x_1 e_1), & \\
& \zeta_\Omega (x)=\zeta_-(x -2 x_1 e_1), &
\end{align*}
if $x \in \Omega_+$.
We use the notation $A_\Omega = (a_\Omega^{ij})_{i,j=1}^d$ and $b_\Omega = (b_\Omega^i)_{i=1}^d$.
We denote by $a_\Omega : \HNullEins (\Omega) \times \HNullEins (\Omega) \to \CC$ the form given by
\[
 a_\Omega (u,v) = \int_{\Omega} \left( \sum_{i,j=1}^d a_\Omega^{ij} \partial_i u \overline{\partial_j v} + \sum_{i=1}^d b_\Omega^i \partial_i u \overline{v} + c_\Omega u \overline{v} \right) \drm x ,
\]
and by $H_\Omega$ the associated m-sectorial operator with domain $\Dom (H_\Omega)$. Note that $H_\Omega$ is the Friedrichs extension of the operator
 $\Op_\Omega : C_{\mathrm{c}}^\infty (\Omega) \to L^2 (\Omega)$,
 \[
\Op_\Omega u := - \diver (A_\Omega \nabla u) + b_\Omega^\T \nabla u + c_\Omega u = -\sum_{i,j=1}^d \partial_i \left( a_\Omega^{ij} \partial_j u \right) + \sum_{i=1}^d b_\Omega^i \partial_i u + c_\Omega u .
\]
\begin{lemma}\label{l:extension}
\hspace{2em}
\begin{enumerate}[(i)]
 \item Let Assumption (Dir'') be satisfied. Then for all $x,y \in {\Omega}$ and all $\xi \in \RR^d$ we have
\begin{equation*}
\Elliptic^{-1} \lvert \xi \rvert^2 \leq \xi^\T A_\Omega (x) \xi \leq \Elliptic \lvert \xi \rvert^2
\quad\text{and}\quad
\lVert A_\Omega (x) - A_\Omega (y) \rVert_\infty \leq \Lipschitz \lvert x-y \rvert .
\end{equation*}
\item We have $\psi_\Omega \in \Dom (H_\Omega)$ and
 \begin{equation*}
  (H_\Omega \psi_\Omega)(x) = \begin{cases}
                (H_- \psi_-)(x) & \text{for $x \in \Omega_-$}, \\
                -(H_- \psi_-)(x-2x_1 e_1) & \text{for $x \in \Omega_+$} .
               \end{cases}
 \end{equation*}
 Hence we have $\lvert H_\Omega \psi_\Omega \rvert \leq \lvert V_\Omega \psi_\Omega \rvert + \lvert \zeta_\Omega \rvert$ almost everywhere on $\Omega$.
\end{enumerate}
\end{lemma}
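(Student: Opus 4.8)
The plan is to treat the two assertions in turn. Throughout I write the reflection $x\mapsto x-2x_1e_1$ as $x\mapsto Jx$ with $J=\operatorname{diag}(-1,1,\ldots,1)=J^{-1}=J^\T$, so that the extension rules for the second order coefficients read compactly $A_\Omega(x)=JA_-(Jx)J$ for $x\in\Omega_+$. For (i), ellipticity is immediate since $\xi^\T A_\Omega(x)\xi=(J\xi)^\T A_-(Jx)(J\xi)$ and $\lvert J\xi\rvert=\lvert\xi\rvert$. For the Lipschitz bound, the cases where $x,y$ lie both in $\tilde\Omega_-$ or both in $\Omega_+$ reduce at once to the Lipschitz estimate for $A_-$ (first extended to $\tilde\Omega_-$ by continuity), using that conjugation by $J$ leaves the row sum norm invariant and that $\lvert Jx-Jy\rvert=\lvert x-y\rvert$. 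The remaining case $x\in\tilde\Omega_-$, $y\in\Omega_+$ is where Assumption (Dir'') enters: at any interface point $p$ (where $p_1=0$) we have $Jp=p$ and, by (Dir''), $a_-^{1k}(p)=0$ for $k\ge 2$, hence $A_-(p)=JA_-(p)J$, so $A_\Omega$ is continuous across $\{x_1=0\}$. Since $\Omega=(-1,1)\times(0,1)^{d-1}$ is convex, the segment $[x,y]$ meets the interface in a single point $p$, and the triangle inequality through $p$ together with this continuity gives $\lVert A_\Omega(x)-A_\Omega(y)\rVert_\infty\le\lVert A_-(x)-A_-(p)\rVert_\infty+\lVert A_-(Jp)-A_-(Jy)\rVert_\infty\le\Lipschitz(\lvert x-p\rvert+\lvert p-y\rvert)=\Lipschitz\lvert x-y\rvert$.

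For (ii), I first check $\psi_\Omega\in\HNullEins(\Omega)$: because $\psi_-\in\Dom(H_-)\subset\HNullEins(\Omega_-)$ has vanishing trace on $\partial\Omega_-$—in particular on the interface—its antisymmetric reflection glues to an $H^1$-function on $\Omega$, whose trace on $\partial\Omega$ vanishes since the faces of $\partial\Omega_-$ other than the interface are mapped by $J$ into $\partial\Omega$. To identify $H_\Omega\psi_\Omega$ I use the first representation theorem: it suffices to produce $w_\Omega\in L^2(\Omega)$ with $a_\Omega(\psi_\Omega,v)=\langle w_\Omega,v\rangle$ for all $v\in\HNullEins(\Omega)$. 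Splitting $a_\Omega(\psi_\Omega,v)$ into the integrals over $\Omega_-$ and $\Omega_+$, the first equals $\hat a_-(\psi_-,v|_{\Omega_-})$, where $\hat a_-$ denotes the sesquilinear form on $\HEins(\Omega_-)$ given by the integrand of $a_-$. In the second integral I substitute $x=Jy$; using $\psi_\Omega(Jy)=-\psi_-(y)$, $\partial_i\psi_\Omega(Jy)=-J_{ii}\partial_i\psi_-(y)$, the extension rules $a_\Omega^{ij}(Jy)=J_{ii}J_{jj}a_-^{ij}(y)$, $b_\Omega^i(Jy)=J_{ii}b_-^i(y)$, $c_\Omega(Jy)=c_-(y)$, and writing $v^J(y):=v(Jy)$, every accumulated sign factor collapses to $-1$ (the scalar factors $J_{ii}^2J_{jj}^2$ and $J_{ii}^2$ are all $1$, leaving the single $-1$ from $\psi_\Omega$), so the $\Omega_+$-integral equals $-\hat a_-(\psi_-,v^J)$. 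Hence $a_\Omega(\psi_\Omega,v)=\hat a_-(\psi_-,v|_{\Omega_-}-v^J)$.

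The step I expect to require the most care is the claim that $v|_{\Omega_-}-v^J\in\HNullEins(\Omega_-)$: on the interface the traces of $v|_{\Omega_-}$ and of $v^J=v|_{\Omega_+}\circ J$ coincide (both equal the trace of the $H^1(\Omega)$-function $v$, and $J$ fixes the interface), whereas on the remaining part of $\partial\Omega_-$ both vanish because $v\in\HNullEins(\Omega)$ and $J$ maps that part into $\partial\Omega$. Granting this, the definition of $H_-$ gives $\hat a_-(\psi_-,v|_{\Omega_-}-v^J)=\langle H_-\psi_-,v|_{\Omega_-}-v^J\rangle_{L^2(\Omega_-)}$, and undoing the substitution $x=Jy$ in the term containing $v^J$ rewrites the right-hand side as $\langle w_\Omega,v\rangle_{L^2(\Omega)}$ with $w_\Omega=H_-\psi_-$ on $\Omega_-$ and $w_\Omega(x)=-(H_-\psi_-)(Jx)$ on $\Omega_+$. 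Since $w_\Omega\in L^2(\Omega)$, the first representation theorem yields $\psi_\Omega\in\Dom(H_\Omega)$ and $H_\Omega\psi_\Omega=w_\Omega$, which is precisely the asserted formula. Finally, the pointwise inequality on $\Omega$ follows by evaluating the hypothesis $\lvert H_-\psi_-\rvert\le\lvert V_-\psi_-\rvert+\lvert\zeta_-\rvert$ at $x$ on $\Omega_-$ and at $Jx$ on $\Omega_+$, together with $\lvert\psi_\Omega(x)\rvert=\lvert\psi_-(Jx)\rvert$, $V_\Omega(x)=V_-(Jx)$ and $\lvert\zeta_\Omega(x)\rvert=\lvert\zeta_-(Jx)\rvert$.
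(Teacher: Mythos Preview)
Your proof is correct and follows essentially the same route as the paper: the $J$-conjugation argument for ellipticity, the triangle inequality through an interface point (using (Dir'') for continuity there) for the Lipschitz bound, and the form computation via the substitution $x\mapsto Jx$ together with the first representation theorem for part~(ii). The only notable difference is that the paper works with test functions $\phi\in C_{\mathrm c}^\infty(\Omega)$ and shows $\phi-\phi^J\in\HNullEins(\Omega_-)$ by elementary means (a smooth function vanishing on the boundary), whereas you take general $v\in\HNullEins(\Omega)$ and invoke trace theory on the Lipschitz domain $\Omega_-$; both are valid, and similarly for your trace-based verification that $\psi_\Omega\in\HNullEins(\Omega)$ versus the paper's approximation by $C_{\mathrm c}^\infty(\Omega_-)$-sequences.
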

\begin{proof}
Recall that by assumption we have for all $x_0,y_0 \in \tilde\Omega_- = (-1,0]\times (0,1)^{d-1}$
 \begin{equation*}
 \lVert A_\Omega (x_0) - A_\Omega (y_0) \rVert_\infty =\lVert A_- (x_0) - A_- (y_0) \rVert_\infty  \leq \Lipschitz \lvert x_0-y_0 \rvert.
 \end{equation*}
Moreover, we have for all $x_0 \in \tilde\Omega_-$ and $\xi \in \RR^d$ that
\begin{equation*}
 \Elliptic^{-1} \lvert \xi \rvert^2 \leq
 \xi^\T A_- (x_0) \xi  = \xi^\T A_\Omega (x_0) \xi \leq \Elliptic \lvert \xi \rvert^2 .
\end{equation*}
By the definition of the extensions and assumption (Dir'') we have for all $x,z\in \tilde\Omega_+ = [0,1)\times (0,1)^{d-1}$
 \begin{equation*}
\lVert A_\Omega (x) - A_\Omega (z) \rVert_\infty  \leq \Lipschitz \lvert x-z \rvert.
 \end{equation*}
Let now $x \in \Omega_-$, $y \in \Omega_+$, $T := \{x + s (y-x) \colon s \in [0,1]\}$, and choose $z \in \tilde\Omega_- \cap \tilde\Omega_+ \cap T$. Then we have
\begin{align*}
 \lVert A_\Omega (x) - A_\Omega (y) \rVert_\infty &\leq \lVert A_\Omega (x) - A_\Omega (z) \rVert_\infty + \lVert A_\Omega (z) - A_\Omega (y) \rVert_\infty \\
 &\leq \Lipschitz \left(  \lvert x - z \rvert + \lvert z - y \rvert \right) = \Lipschitz \lvert x-y \rvert .
\end{align*}
This shows the Lipschitz continuity in part (i) of the lemma.
Let now $x\in \tilde\Omega_+$. Then there exists a point $x_0\in\tilde\Omega_-$ such that $A_\Omega(x)= \tilde{A}_-(x_0)$, where $\tilde{A}_-(x_0)$ is the matrix obtained from $A_-(x_0)$ by multiplying the 1st column and the 1st row by minus one.
This corresponds to conjugation with a diagonal unitary matrix.
Consequently, the eigenvalues of $\tilde{A}_-(x_0)$ and $A_\Omega(x)$ coincide, which implies the validity of the ellipticity condition from part (i).
\par
 We now turn to the proof of part (ii). First we show that $\psi_\Omega \in \HNullEins (\Omega)$. To this end let $(\psi_n)_{n \in \NN}$ be a sequence in $C_{\mathrm{c}}^\infty (\Omega_-)$ such that $\psi_n \to \psi_-$ in $\HEins (\Omega_-)$. We denote by $\hat\psi_n$ the function on $\Omega$ obtained by antisymmetric reflection of $\psi_n$ with respect to $\{0\} \times (0,1)^{d-1}$.
 Then we have for all $m,l \in \NN$
 \[
  \lVert \hat\psi_m - \hat\psi_l \rVert_{\HEins (\Omega)}
  =
  2\lVert \psi_m - \psi_l \rVert_{\HEins (\Omega_-)} .
 \]
 Hence, $(\hat\psi_n)_{n \in \NN}$ is a Cauchy sequence in $\HEins (\Omega)$ of compactly supported functions. We denote its limit by $\overline \psi \in \HNullEins (\Omega)$. Since we have
 \[
  \lVert \overline \psi - \psi_\Omega \rVert_{L^2 (\Omega)}
  \leq
  \lVert \overline \psi - \hat\psi_n \rVert_{L^2 (\Omega)} +
  \lVert \psi_\Omega - \hat\psi_n \rVert_{L^2 (\Omega)}
  =
  \lVert \overline \psi - \hat\psi_n \rVert_{L^2 (\Omega)} +
  2\lVert \psi_- - \psi_n \rVert_{L^2 (\Omega_-)},
 \]
 we find $\overline \psi = \psi_\Omega$, and hence $\psi_\Omega \in \HNullEins (\Omega)$.
 \par
By definition of the extensions we have for $x \in \Omega_+$
 \[
  (\partial_i \psi_\Omega)(x) = \begin{cases}
                           (\partial_1 \psi_-) (x-2 x_1 e_1) & \text{if $i = 1$}, \\
                            - (\partial_i \psi_-) (x-2 x_1 e_1) & \text{if $i \in \{2,\ldots , d\}$}.
                           \end{cases}
 \]
Choose now a test function $\phi \in C_{\mathrm{c}}^\infty (\Omega)$, and define
for $x \in \Omega_-$ the function $\phi_- (x) = \phi (x - 2x_1 e_1)$. Then $\partial_1 \phi_- (x) = -(\partial_1 \phi)(x - 2x_1 e_1)$ and $\partial_j \phi_- (x) = (\partial_j \phi)(x - 2x_1 e_1)$ for $j \in \{2,\ldots , d\}$.
We obtain by substitution
 \begin{align*}
  \int_{\Omega}  \nabla \psi_\Omega^\T A_\Omega \nabla \overline\phi
  &= \int_{\Omega_-} \nabla \psi_\Omega^\T A_\Omega \nabla \overline\phi
  + \sum_{i,j=1}^d \int_{\Omega_+} (\partial_i\psi_\Omega) a_\Omega^{ij} (\partial_j \overline\phi) \\
  &= \int_{\Omega_-} \nabla \psi_-^\T A_- \nabla \overline\phi
  - \sum_{i,j=1}^d \int_{\Omega_-} (\partial_i\psi_-) a_-^{ij} (\partial_j \overline\phi_-)
 \end{align*}
and
\begin{align*}
 \int_\Omega b_\Omega^\T \nabla \psi_\Omega \overline\phi
 &=
 \int_{\Omega_-} b_\Omega^\T \nabla \psi_\Omega \overline\phi
 +
 \sum_{i=1}^d \int_{\Omega_+} b_\Omega^i (\partial_i \psi_\Omega) \overline\phi
 =
 \int_{\Omega_-} b_\Omega^\T \nabla \psi_\Omega \overline\phi
 -
 \sum_{i=1}^d \int_{\Omega_-} b_-^i (\partial_i \psi_-) \overline\phi_- .
\end{align*}
Hence, we obtain
\begin{align*}
 a_\Omega (\psi_\Omega , \phi) &= \int_{\Omega} \left( \nabla \psi_\Omega^\T A_\Omega \nabla \overline\phi + b_\Omega^\T \nabla \psi_\Omega \overline\phi + c_\Omega \psi_\Omega \overline\phi \right) \drm x  \\
 &= \int_{\Omega_-} \nabla \psi_-^\T A_- \nabla (\overline\phi - \overline\phi_-)
 +
 \int_{\Omega_-} b_-^\T \nabla \psi_- (\overline\phi - \overline\phi_-)
 +
 \int_{\Omega_-} c_- \psi_- (\overline\phi - \overline\phi_-) .
\end{align*}
For $x \in \Omega_-$ we use the notation $\tilde\phi_- (x) = \phi (x) - \phi_- (x)$. Note that $\tilde\phi_- \in C^\infty (\Omega_-)$ and $\tilde\phi_- (x) = 0$ for $x \in \partial\Omega_-$. Hence $\tilde\phi_- \in \HNullEins (\Omega_-)$, see e.g.\ \cite[Theorems~A6.6 and A6.10]{Alt-06}.
We obtain by the first representation theorem for quadratic forms and substitution
\[
 a_\Omega (\psi_\Omega , \phi)
 =
 a_- (\psi_- , \tilde\phi_-)
 =
\int_{\Omega_-} (H_- \psi_-) \overline{\tilde \phi}_-
 = \int_{\Omega_-} H_- \psi_- \overline\phi - \int_{\Omega_+} (H_- \psi_-) (x-2x_1 e_1) \overline\phi (x) .
\]
We have shown that
\[
 a_\Omega (\psi_\Omega , \phi) = \langle \tilde\psi  , \phi \rangle ,
 \quad\text{where}\quad
 \tilde \psi (x) = \begin{cases}
                (H_- \psi_-)(x) & \text{for $x \in \Omega_-$}, \\
                -(H_- \psi_-)(x-2x_1 e_1) & \text{for $x \in \Omega_+$} .
               \end{cases}
\]
Hence, by the first representation theorem we find $\psi_\Omega \in \Dom (H_\Omega)$ and $H_\Omega \psi_\Omega = \tilde \psi$.
\end{proof}

%
Now we explain how to obtain the extensions needed in the proof of Theorem~\ref{thm:equidistribution}.
by applying Lemma~\ref{lemma:extensions} iteratively.
Fix $\psi_L \in \Dom (H_L)$, $V_L \in L^\infty (\Lambda_L)$ real-valued and $\zeta_L \in L^2 (\Lambda_L)$, such that $\lvert H_L \psi_L \rvert \leq \lvert V_L \psi_L \rvert + \lvert \zeta_L \rvert$ almost everywhere on $\Lambda_L$.
We recall that the coefficients $A_L$ are extended continuously to $\overline{\Lambda_L}$.
For $b_L$, $c_L$, $\psi_L$, $\zeta_L$, and $V_L$, we define them on $\overline{\Lambda_L}$ by setting their value at the boundary to zero.
The proof of Theorem~\ref{thm:equidistribution} requires extensions of $\psi_L$, $A_L$, $b_L$, $c_L$, $V_L$ and $\zeta_L$ to $\Lambda_{RL}$
satisfying the properties spelled out below.
We will denote the extensions by $\hat\psi_L$, $\hat A_L$, $\hat b_L$, $\hat c_L$, $\hat V_L$, and $\hat \zeta_L$.

\par
We now proceed iteratively to define them on $\Lambda_{RL}$.
Recall that $R$ is a sufficiently large integer power of three.
In a first step we extend $\psi_L : \Lambda_L \to \CC$ to the set
 $\{x \in \Lambda_{3L} \colon  x_i \in (-L/2 , L/2), \ i \in \{2,\ldots , d\}\}$ by requiring $\hat\psi_L=\psi_L$ on $\Lambda_L$, and
\[
\hat \psi_L (x \pm L e_1) = -\psi_L (x -2 x_1 e_1)
\]
for almost all $x \in \Lambda_L$. Now we iteratively extend $\psi_L$ in the remaining $d-1$ directions using the same procedure and obtain a function $\hat \psi_L : \Lambda_{3L} \to \CC$. Iterating this procedure we obtain a function $\hat\psi_L : \Lambda_{RL} \to \CC$. Let us note that this is equivalent to require for the extension $\hat\psi_L$ that $\hat\psi (x) = \psi_L (x)$ for almost all $x \in \Lambda_L$, and
\[
 \hat\psi (x\pm L{e}_k ) = - \hat\psi (x+2(\gamma_k - x_k)e_k)
\]
for all $\gamma\in (L\ZZ)^d \cap \Lambda_{RL}$, almost all $x\in \Lambda_L(\gamma)$, and all $k\in\{1,\ldots, d\}$, as long as $x+2(\gamma_k - x_k)e_k \in \Lambda_{RL}$ and $x \pm L e_k \in \Lambda_{RL}$.
The extended coefficient functions are defined in an analogous way by
\[
  \hat a^{ij}_L (x) = a_L^{ij} (x), \quad \hat b^i_L (x) = b_{L}^i (x), \quad \hat c_L (x) = c_L (x), \quad \hat V_L (x) = V_L (x) , \quad \hat \zeta_L (x) = \zeta_L (x) ,
\]
on $\overline{\Lambda_L}$, and extended by requiring
\begin{align*}
&\hat a^{ij}_L\big(x\pm Le_k\big)=\hat a^{ij}_L\big(x+2(\gamma_k - x_k)e_k\big)
& \text{if}\ i\not=k\ \text{and} \ j\not=k, \\
&\hat a^{kk}_L\big(x\pm Le_k\big)=\hat a^{kk}_L\big(x+2(\gamma_k - x_k)e_k\big),
& \\
&\hat a^{kj}_L\big(x\pm Le_k\big)=\hat a^{jk}_L\big(x\pm Le_k\big)= -\hat a^{kj}_L\big(x+2(\gamma_k - x_k)e_k\big)
& \text{if}\ k\not=j, \\
& \hat b^i_L\big(x\pm L{e}_k\big)=\hat b^i_L\big(x+2(\gamma_k - x_k)e_k\big)
& \text{if}\  i\not=k, \\
&\hat b^k_L\big(x\pm L{e}_k\big)=-\hat b^k_L\big(x+2(\gamma_k - x_k){e}_k\big),
& \\
& \hat c_L\big(x\pm Le_k\big)=\hat c_L\big(x+2(\gamma_k - x_k)e_k\big),
& \\
&\hat V_L\big(x\pm L{e}_k\big)=\hat V_L\big(x+2(\gamma_k - x_k){e}_k\big) ,
& \\
&\hat \zeta_L\big(x\pm L{e}_k\big)=\hat \zeta_L\big(x+2(\gamma_k - x_k){e}_k\big) ,
&
\end{align*}
for all $\gamma\in (L\ZZ)^d$, $x\in \Lambda_L(\gamma)$ and $i,j,k\in\{1,\ldots, d\}$, as long as $x+2(\gamma_k - x_k)e_k \in \Lambda_{RL}$ and $x \pm L e_k \in \Lambda_{RL}$. On the boundaries of $\Lambda_L (\gamma)$ the coefficients $\hat a_L^{ij}$ are continuously extended, while all the other coefficients are set to zero.
Recall from the proof of Theorem~\ref{thm:equidistribution} that we denote by $\hat a_{L} : \HNullEins (\Lambda_{RL}) \times \HNullEins (\Lambda_{RL}) \to \CC$ the densely defined, closed, and sectorial form
\[
 \hat a_L (u,v)  = \int_{\Lambda_{RL}} \left( \sum_{i,j=1}^d \hat a_L^{ij} \partial_i u \overline{\partial_j v} + \sum_{i=1}^d \hat b_L^i \partial_i u \overline{v} + \hat c_L u \overline{v} \right) \drm x ,
\]
and by $\hat H_L$ the m-sectorial operator associated with the form $\hat a_L$ with domain $\Dom (\hat H_L)$. By an iterative application of Lemma~\ref{lemma:extensions} we immediately obtain
\begin{lemma}
\begin{enumerate}[(i)]
\item Let Assumption (Dir) be satisfied. Then for all $x,y \in \Lambda_{RL}$ and all $\xi \in \RR^d$ we have
\begin{equation*} 
\Elliptic^{-1} \lvert \xi \rvert^2 \leq \xi^\T \hat A_L (x) \xi \leq \Elliptic \lvert \xi \rvert^2
\quad\text{and}\quad
\lVert \hat A_L (x) - \hat A_L (y) \rVert_\infty \leq \Lipschitz \lvert x-y \rvert .
\end{equation*}
\item We have $\hat \psi_L \in \Dom (\hat H_L)$ and
  $\lvert \hat H_L  \hat \psi_L \rvert \leq \lvert \hat V_L \hat \psi_L \rvert + \lvert \hat \zeta_L \rvert$ almost everywhere on $\Lambda_{RL}$.
 \end{enumerate}
 \label{lemma:extensions}
\end{lemma}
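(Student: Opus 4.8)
The plan is to obtain Lemma~\ref{lemma:extensions} by iterating Lemma~\ref{l:extension} over the $R^d$ cells $\Lambda_L(\gamma)$, $\gamma\in(L\ZZ)^d\cap\Lambda_{RL}$, that tile $\Lambda_{RL}$. First I would fix an enumeration $\Lambda_L(\gamma^{(0)}),\Lambda_L(\gamma^{(1)}),\dots$ with $\gamma^{(0)}=0$ such that each cell $\Lambda_L(\gamma^{(n)})$, $n\ge1$, shares a $(d-1)$-face $F_n$ with some earlier cell $\Lambda_L(\gamma^{(\sigma(n))})$, $\sigma(n)<n$; such an ordering exists since $\Lambda_{RL}$ is a box of cells. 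The extensions $\hat\psi_L,\hat A_L,\hat b_L,\hat c_L,\hat V_L,\hat\zeta_L$ are then built cell by cell: having defined them on $\mathcal R_{n-1}:=\bigcup_{k<n}\Lambda_L(\gamma^{(k)})$, one defines their values on $\Lambda_L(\gamma^{(n)})$ through the antisymmetric-reflection rules of the appendix across $F_n$, which is exactly the recipe of Lemma~\ref{l:extension} after translating and relabelling the axes so that $F_n\subset\{x_1=0\}$, $\Lambda_L(\gamma^{(\sigma(n))})$ plays the role of $\Omega_-$ and $\Lambda_L(\gamma^{(n)})$ that of $\Omega_+$.

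The single inductive step then \emph{is} Lemma~\ref{l:extension}, once its hypotheses are checked for the pair $(\Omega_-,\Omega_+)$. For the function: by construction the restriction of the extension built so far to the one cell $\Lambda_L(\gamma^{(\sigma(n))})$ is a sign-adjusted reflection of $\psi_L$, and, since $\psi_L\in\HNullEins(\Lambda_L)=\Dom(a_L)$ has zero trace on \emph{every} side of $\Lambda_L$, this reflected function has zero trace on every side of $\Lambda_L(\gamma^{(\sigma(n))})$; by the iterated reflection structure from the proof of Lemma~\ref{l:extension}(ii) it therefore lies in the domain of the Friedrichs operator on that cell with the (reflected) coefficients there, and $\lvert H_-\psi_-\rvert\le\lvert V_-\psi_-\rvert+\lvert\zeta_-\rvert$ holds. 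For (Dir'') I would carry along the inductive invariant that on every cell produced so far each off-diagonal coefficient $\hat a_L^{ij}$ ($i\ne j$) vanishes on all sides of that cell: this holds for $\Lambda_L$ by Assumption~(Dir), and antisymmetric reflection across a coordinate hyperplane maps sides of a cell to sides of its mirror cell and at worst changes the sign of a coefficient, hence preserves the vanishing; in particular $\hat a_L^{1k}=\hat a_L^{k1}$, $k\in\{2,\dots,d\}$, vanish on $F_n$, which is precisely (Dir''). Lemma~\ref{l:extension}(i) then supplies uniform ellipticity with constant $\Elliptic$ (via the diagonal-unitary conjugation argument, which leaves the spectrum of $A$ unchanged) and Lipschitz continuity with constant $\Lipschitz$ on $\Omega_-\cup\Omega_+$, and part~(ii) gives $\hat\psi_L\in\Dom(H_{\Omega_-\cup\Omega_+})$ together with the explicit reflected formula for $H_{\Omega_-\cup\Omega_+}\hat\psi_L$; since $\hat V_L,\hat\zeta_L$ obey the same reflection rule, the differential inequality holds on $\Omega_-\cup\Omega_+$.

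After all $R^d$ cells are processed one has the bounds of part~(i) globally on $\Lambda_{RL}$: between points in non-adjacent cells the Lipschitz bound follows, as in the proof of Lemma~\ref{l:extension}(i), by passing through an intermediate point on a common segment and adding the estimates. It remains to lift the local domain statements to $\hat\psi_L\in\Dom(\hat H_L)$ with $\hat H_L\hat\psi_L=\tilde\psi$, where $\tilde\psi$ is the global reflection of $H_L\psi_L$. Global $\HNullEins(\Lambda_{RL})$-membership is immediate: $\hat\psi_L$ has zero trace on $\partial\Lambda_{RL}$ (each boundary face is a side of an outermost cell on which $\hat\psi_L$ is a reflection of $\psi_L$), and the $\HEins$-matching across each interface was obtained in the two-cell step. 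The weak identity $\hat a_L(\hat\psi_L,\phi)=\langle\tilde\psi,\phi\rangle$ for $\phi\in C_{\mathrm c}^\infty(\Lambda_{RL})$ is then verified by a partition of unity subordinate to the cells and their faces: near an interior point of a cell it reduces to the weak identity for $\psi_L$, and near a face it reduces to the two-cell identity from Lemma~\ref{l:extension}(ii); by density it extends to all $\phi\in\HNullEins(\Lambda_{RL})$, and the first representation theorem gives the claim.

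The step I expect to be the main obstacle is the compatibility and bookkeeping around the lower-dimensional strata where several reflected cells meet (the ``four-cell corner'' and its higher-codimension analogues): there one must check that the iterated reflection map and the accompanying sign pattern on the coefficients are independent of the order in which the elementary single-hyperplane reflections are composed---so that $\hat\psi_L$, $\hat A_L$, etc.\ are well defined and the two-cell identities genuinely patch---and it is precisely at these corners that Assumption~(Dir), rather than merely a one-face condition, is needed. Everything else is a routine, if somewhat lengthy, iteration of the appendix lemma.
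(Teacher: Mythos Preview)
Your proposal is correct and follows essentially the same approach as the paper, which literally states only that the lemma follows ``by an iterative application of Lemma~\ref{l:extension}''; you have simply supplied the details of that iteration, including the preservation of the (Dir)-type invariant under reflection, the global $\HNullEins$-patching, and the corner-compatibility check. The one organizational difference is that the paper's construction (described just before the lemma) iterates \emph{direction by direction}---first reflecting the whole of $\Lambda_L$ to a slab in the $e_1$-direction, then that slab in the $e_2$-direction, and so on---whereas you iterate \emph{cell by cell} along a face-adjacency path; the direction-by-direction scheme has the mild advantage that at every step the current domain is a rectangular box, so Lemma~\ref{l:extension} applies verbatim and the corner-compatibility issue you flag never arises explicitly, while your scheme requires the extra (but correct) observation that the elementary reflections commute so the extension is path-independent.
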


In a completely analogous way the coefficient functions of the operator $\Op_L$ can be extended to functions on the whole
of $\RR^d$ satisfying in particular
\begin{equation*}
\forall x,y,\xi \in \RR^d : \quad
\Elliptic^{-1} \lvert \xi \rvert^2 \leq \xi^\T \hat A(x) \xi \leq \Elliptic \lvert \xi \rvert^2
\quad\text{and}\quad
\lVert \hat A(x) - \hat A(y) \rVert_\infty \leq \Lipschitz \lvert x-y \rvert .
\end{equation*}
This gives rise to an elliptic operator $\hat\Op\colon C_{\mathrm{c}}^\infty (\RR^d) \to L^2 (\RR^d)$
whose Friedrichs extension is denoted by $\hat H$ and used in the approximation argument of the proof
of Theorem~\ref{thm:equidistribution}.
\paragraph*{Acknowledgments}
The authors gratefully acknowledge stimulating discussions with Alexander Dicke, Michela Egidi, Albrecht Seelmann, and Christian Seifert which helped to improve the manuscript.
This research was partially supported by the Deutsche Forschungsgemeinschaft e.V.
through the grant Ve 253/6 devoted to the topic 'Unique continuation principles and equidistribution properties of eigenfunctions'.

%
%
%
%
%
%
%
%
\newcommand{\etalchar}[1]{$^{#1}$}

\end{document}